\documentclass[11pt]{article}
\usepackage{amsmath}
\usepackage{amsthm}
\usepackage{amssymb}
\usepackage{amsfonts}
\usepackage{latexsym}
\usepackage{graphicx}
\usepackage{subcaption}
\usepackage{caption}
\usepackage[pdftex,bookmarks=true,bookmarksnumbered=true]{hyperref}
\usepackage{color}
\usepackage{array}
\usepackage{booktabs} 
\renewcommand{\epsilon}{\varepsilon}
\renewcommand{\rho}{\varrho}
\renewcommand{\phi}{\varphi}
\newcommand{\DS}{\displaystyle}
\newcommand{\N}{{\mathbb N}}
\newcommand{\Z}{{\mathbb Z}}

\newcommand{\R}{{\mathbb R}}

\newcommand{\cF}{{\cal F}}
\newcommand{\cG}{{\cal G}}

\newcommand{\cL}{{\cal L}}

\newcommand{\cX}{{\cal X}}
\newcommand{\cY}{{\cal Y}}
\newcommand{\bg}{\bar{\gamma}}
\newcommand{\bu}{\bar{u}}
\newcommand{\bw}{\bar{\omega}}
\newcommand{\bx}{\bar{x}}
\newcommand{\bB}{\bar{B}}

\newcommand{\blambda}{\bar{\lambda}}
\newcommand{\bphi}{\bar{\phi}}
\newcommand{\bpsi}{\bar{\psi}}
\newcommand{\bmu}{\bar{\mu}}
\newcommand{\bcG}{\bar{\cG}}

\newcommand{\rstar}{r^\star}
\renewcommand{\Box}{\mathrm{Box}}
\newcommand{\finite}{\mathrm{finite}}
\newcommand{\tail}{\mathrm{tail}}
\newcommand{\nsym}{n^{\mathrm{sym}}}
\renewcommand{\d}{\mathrm{d}}
\newcommand{\sym}{\mathrm{sym}}
\newcommand{\asym}{\mathrm{asym}}

\newcommand{\lambdabif}{\lambda_\circ}
\newcommand{\ubif}{u_\circ}
\newcommand{\phibif}{\phi_\circ}
\newcommand{\psibif}{\psi_\circ}
\newcommand{\xibif}{\xi_\circ}
\newcommand{\zetabif}{\zeta_\circ}
\newcommand{\rhobif}{\rho_\circ}

\begin{document}
\newtheorem{definition}{Definition}[section]
\newtheorem{theorem}[definition]{Theorem}
\newtheorem{proposition}[definition]{Proposition}
\newtheorem{corollary}[definition]{Corollary}
\newtheorem{lemma}[definition]{Lemma}
\newtheorem{hypothesis}[definition]{Hypothesis}
\newtheorem{remark}[definition]{Remark}
\newtheorem{conjecture}[definition]{Conjecture}
\newtheorem{problem}[definition]{Problem}
\newtheorem{assumption}[definition]{Assumption}

\title{Validation of Symmetry Induced Bifurcation Points
       in Nonlinear Diffusion Problems}
\author{ Maxime Breden\thanks{
   CMAP, CNRS, \'Ecole polytechnique, Institut Polytechnique de
Paris, 91120 Palaiseau, France.
   ({\tt maxime.breden@polytechnique.edu}).}
\and Evelyn Sander\thanks{
   Department of Mathematical Sciences,
   George Mason University,
   Fairfax, Virginia 22030,
   USA
   ({\tt esander@gmu.edu}).}
\and Thomas Wanner\thanks{
   Department of Mathematical Sciences,
   George Mason University,
   Fairfax, Virginia 22030,
   USA
   ({\tt twanner@gmu.edu}).}
}
\date{September 29, 2026}

\maketitle
\begin{abstract} 
Nonlinear parabolic partial differential equations frequently occur
in the modeling of applied phenomena. They often exhibit complicated
dynamics which is organized by their associated equilibrium solutions.
As a function of the involved model parameters, the equilibria can be
detected using a bifurcation analysis, which in turn is centered around
a deeper understanding of bifurcation points. In many applied systems
establishing these organizing centers rigorously is difficult to achieve
using traditional mathematical techniques. In this paper, we propose a
theoretical framework for computer-assisted proofs of symmetry-induced
transcritical and pitchfork bifurcations in nonlinear problems. This
is accomplished by reformulating the existence question via a zero
finding problem for an associated extended nonlinear system. The approach
is general and allows for a wide variety of underlying symmetries. We
demonstrate how these results can be applied in the setting of parabolic
systems and higher-order equations, on both one- and two-dimensional
domains.

\bigskip\noindent
{\bf AMS subject classifications:} 
Primary: 37G40, 37M20, 65G20, 65P30; Secondary: 37B35, 37C81, 65G30,
74G60, 74N15.

\bigskip\noindent
{\bf Keywords:} Bifurcations, nonlinear diffusion,
symmetry-breaking, transcritical and pitchfork bifurcations,
computer-assisted proofs, interval arithmetic, rigorous validation
\end{abstract}
\newpage
\setcounter{tocdepth}{2}
\tableofcontents
%
%
%
\section{Introduction}
\label{sec:intro}
Bifurcation theory has long been used to study the zeros of nonlinear
parameter-dependent operators, see for example~\cite{chow:hale:82a,
kielhoefer:12a} and the references therein. As the underlying parameters
change, new zeros can only occur via bifurcation points, which therefore
act as organizing centers for the global structure of the zero set.
In view of Sard's theorem, in general nonlinear problems only saddle-node
bifurcations can be observed in a perturbation-robust sense, see for
example the description in~\cite{wanner:18a}. The situation is different
if the nonlinear problem under consideration is for example generated
by the study of equilibrium solutions of parabolic partial differential
equations. In this case, perturbations of the differential equation lead
to very specific perturbations of the associated nonlinear zero finding
problem, and they can and often will exhibit transcritical and 
pitchfork bifurcation points which are robust under parameter changes.
Usually, this robustness is a consequence of some symmetry which is preserved
by the differential equation and some of its stationary states, and which
is then broken at these bifurcation points --- hence the name {\em
symmetry-breaking bifurcation\/}.

\begin{figure}[tb]
      \includegraphics[width=\textwidth]{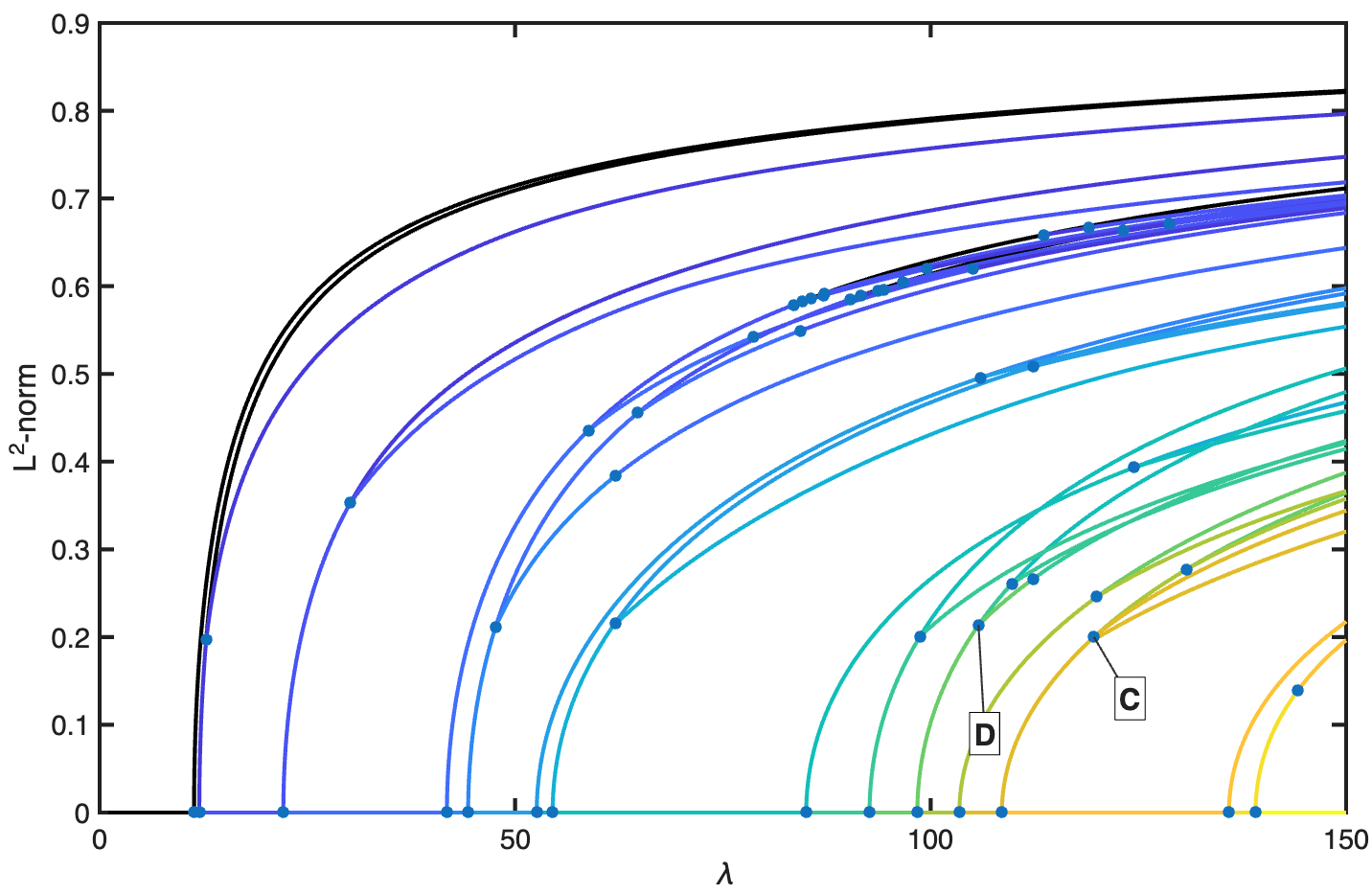}
  \caption{A portion of the bifurcation diagram of steady states for the Ohta--Kawasaki model~\eqref{dbcp} in two dimensions, for total mass $\mu = -0.1$, interaction parameter $\sigma = 1$, and a slightly rectangular domain $\Omega=(0,1)\times(0,0.97)$ to avoid extra symmetries. The pieces of branches are colored by Morse index, and the blue dots represent potential pitchfork and transcritical bifurcations which have been detected using the 
  AUTO software package. The ones labeled C and D are validated in Theorem~\ref{thm:resultsOK}. This figure is by no means a complete picture.}
  \label{fig:diagramOK2D}
\end{figure}

While the general mathematical theory of symmetry-breaking bifurcations
is well-developed~\cite{chossat:lauterbach:00a, golubitsky:etal:88a},
applying these results to specific applied models can be challenging.
Usually, one can only study bifurcation points which are on ``simple''
solution branches directly, such as branches of spatially constant solutions.
As soon as the focus shifts towards secondary bifurcations from nontrivial
solution branches, pen-and-paper existence proofs frequently are no longer
feasible.

In such situations computer-assisted proof techniques
can fill the void. It was shown in~\cite{lessard:sander:wanner:17a} that
both saddle-node and standard $\Z_2$-symmetry breaking pitchfork bifurcations
can be rigorously established in the Ohta--Kawasaki model for the formation
of diblock copolymers. This work was based on the numerical studies
in~\cite{johnson:etal:13a}, and it was accomplished by reformulating both
bifurcation types as zero finding problems for suitable extended
nonlinear systems. These extended systems contain not only the equilibrium
condition, but also force the existence of a zero eigenvalue of the
Fr\'echet derivative at the bifurcation point. While nominally the
extended systems for these two situations look the same, the distinction
between the two types of bifurcation points is achieved via domain restrictions
for the underlying operators. Based on this reformulation, it was possible
in~\cite{lessard:sander:wanner:17a} to apply standard computer-assisted
proof techniques to rigorously verify branches of saddle-node and
$\Z_2$-symmetry breaking bifurcation points in a two-parameter setting.

The results in~\cite{lessard:sander:wanner:17a} led to the validation
of many pitchfork bifurcation points in the Ohta--Kawasaki equilibrium
model on one-dimensional domains. However, numerical 
studies of this model also uncovered numerous potential bifurcation 
points which could not be treated in this way. In these cases, the
underlying symmetries were more complicated in nature, induced by cyclic
groups of higher order. In~\cite{rizzi:sander:wanner:24a} we were able to
extend the previous approach to cover these cases as well, by suitably
changing the extended system. The computer-assisted proofs in that
paper were modeled after the approach described in~\cite{rizzi:etal:22a,
sander:wanner:16a, sander:wanner:21a, wanner:17a, wanner:18b}, but they
relied heavily on the specific form of the underlying cyclic symmetry 
operator. Common to both of the results described so far was the
exclusive focus on saddle-node and pitchfork bifurcations, the
assumption of zero mass in the Ohta--Kawasaki model, and the
restriction to one-dimensional base domains.

In the present paper, we seek to generalize the above-mentioned results
in several significant directions. These can be described as follows:
\begin{itemize}
\item Move beyond the simple $\Z_2$ and cyclic symmetries considered so far
by describing an approach which in principle allows for arbitrary general
symmetries. In particular, shift the emphasis from detailed knowledge of the
involved symmetry operators to a formulation which focuses on the underlying
symmetry subspace only.
\item Derive methods for dealing with transcritical bifurcations,
which have for example been observed numerically in the Ohta--Kawasaki model
for nonzero mass.
\item Develop an approach that allows one to also consider partial differential
equations on higher-dimensional domains as well as to extend the class of equation
types that can be considered.
\end{itemize}
All of the above points are achieved using the results of this paper,
while still employing the appropriate extended systems. In fact,
we can describe in more detail how the extended systems have to be
adjusted based on the underlying symmetries in the system. Moreover, by
restricting attention to a reformulation in terms of a zero finding problem,
one can then establish the existence of the bifurcation point using a variety
of different available computer-assisted proof techniques.

This paper is part of a broader body of works which develop computer-assisted
techniques to study bifurcations, and an approach similar to the one described
here can be used to study cusp bifurcations~\cite{LesPug25}. Also
computer-assisted, but slightly different in spirit, one can rigorously study
bifurcations using desingularization, see for example~\cite{BerLesQue21} for
Hopf bifurcations, or quantitative Lyapunov-Schmidt reduction and normal form
calculations~\cite{AriKoc10,Zgl15}. Going even further, a complete description
of the dynamics near the bifurcation can sometimes be obtained by
computer-assisted means~\cite{KubZglKal25}.
\begin{figure}[tb]
      \includegraphics[width=\textwidth]{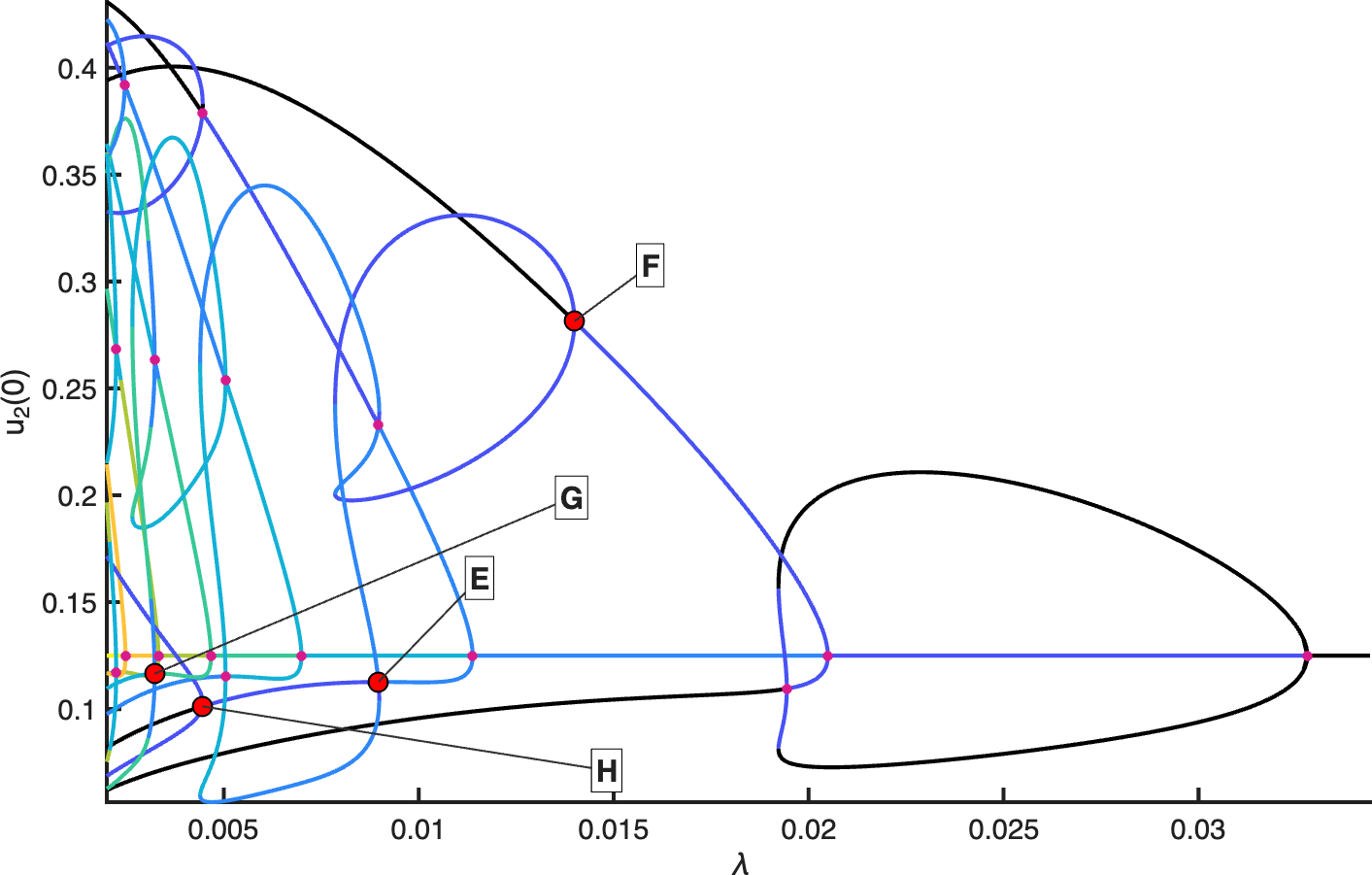}
  \caption{A portion of the bifurcation diagram of steady states for the SKT
      model~\eqref{parabolicsys} on the domain $\Omega=(0,1)$ with parameters~\eqref{crossdiffparams}, first obtained in~\cite{IidMimNim06}. To be consistent with this earlier work, the vertical axis represents the value $u_2(0)$ of the second component at the left boundary. The red dots represent 
  potential pitchfork and transcritical bifurcations which have been detected using the AUTO software package. The ones labeled E to H are validated in Theorem~\ref{thm:resultsSKT}. This figure is by no means a complete picture of all possible bifurcations.}
  \label{fig:diagramSKT1D}
\end{figure}

The main results of this paper are illustrated in the context of two
specific partial differential equations. First, we consider
the previously-mentioned Ohta--Kawasaki equation describing phase separation
in diblock copolymers. This model is given by the parabolic partial
differential equation
\begin{equation} \label{dbcp}
  \partial_t u = -\Delta \left( \Delta u + \lambda \left( u - u^3 \right) \right) -
  \lambda \sigma (u - \mu) \; \mbox{ in } \Omega,
\end{equation}
subject to homogeneous Neumann boundary conditions for both~$u$
and~$\Delta u$. While the parameter~$\lambda$ serves as the main
bifurcation parameter, the model also depends on the total mass~$\mu$
and the polymer interaction parameter~$\sigma$. We will be able to validate
symmetry-breaking bifurcations in the equilibrium structure of this model
for the case of nonzero mass. In addition, we rigorously establish
transcritical bifurcation points which persist under parameter variation
for the first time. Finally, existence proofs for bifurcation points on
two-dimensional rectangular domains are included as well, as shown for
instance in Figure~\ref{fig:OK2D_1}.
\begin{figure}[!h]
\centering
      \includegraphics[width=\textwidth]{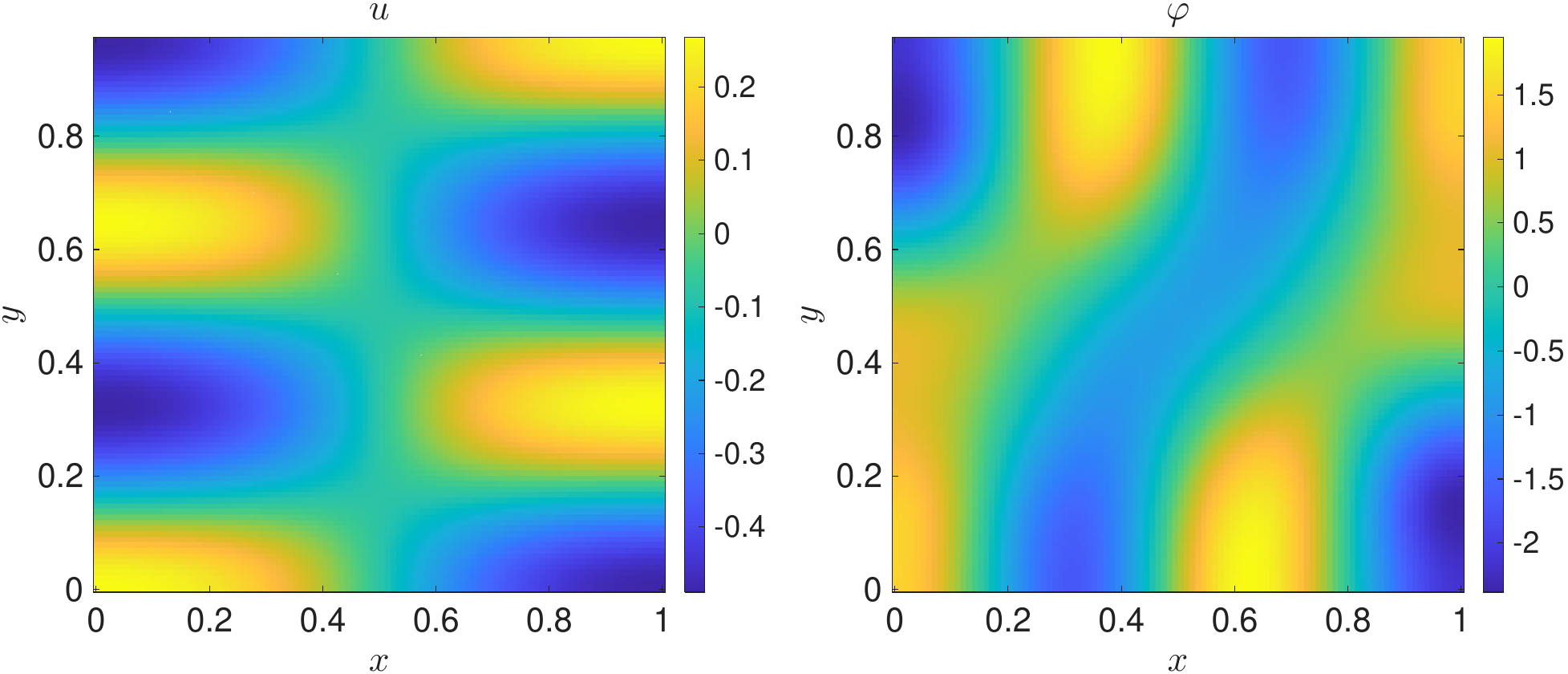}
  \caption{An approximate steady state solution~$u$ of the
  Ohta--Kawasaki equation~\eqref{dbcp} for the parameters
  $\mu=-0.1$, $\sigma=1$, and
  $\Omega=(0,1)\times(0,0.97)$, which undergoes a symmetry-breaking
  transcritical bifurcation at $\lambda\approx 119.66$, with
  approximate eigenfunction $\phi$. This bifurcation is
  labeled~$C$ in Table~\ref{tab:OK}. The occurrence of this
  bifurcation is rigorously established in Theorem~\ref{thm:resultsOK}.}
  \label{fig:OK2D_1}
\end{figure}

The second problem we study in this paper is the  
Shigesada--Kawasaki--Teramoto (SKT) cross diffusion system, a quasi-linear parabolic system introduced in~\cite{ShiKawTer79} to model competing species
\begin{equation} \label{parabolicsys}
  \begin{array}{rcl}
    \DS \partial_t u_1 & = & \DS \Delta \left( d_1 u_1 + d_{11} u_1^2 + d_{12} u_1 u_2 \right)
      + \left( r_1 u_1 - a_1 u_1^2 - b_1 u_1 u_2 \right) \; \\[1ex]
    \DS \partial_t u_2 & = & \DS \Delta \left( d_2 u_2 + d_{21} u_1 u_2 + d_{22} u_2^2 \right)
      + \left( r_2 u_2 - a_2 u_2^2 - b_2 u_1 u_2 \right) \;
  \end{array}
   \; \mbox{ in } \Omega,
\end{equation}
subject to homogeneous Neumann boundary conditions. The presence of
cross-diffusion terms in this system provides a potentially destabilizing
mechanism akin to a Turing instability~\cite{IidNimYam18}, which can lead to 
pattern formation, and intricate bifurcation diagrams of steady states have
indeed been obtained numerically for this model~\cite{BreKueSor21,IidMimNim06,KueSor20}, 
as also shown in Figure~\ref{fig:diagramSKT1D}. Following previous studies, we 
take $d_1=d_2=\lambda$ as the bifurcation parameter, while the other parameters 
are fixed as
\begin{equation} \label{crossdiffparams} 
  d_{11} = d_{21} = d_{22} = 0 \; , \;\;
  d_{12} = 3 \; , \;\;
  r_1 = 5 \; , \;\;
  r_2 = 2 \; , \;\;
  a_1 = a_2 = 3 \; , \;\;
  b_1 = b_2 = 1 \;.
\end{equation}
For this system as well, we validate a variety of symmetry-breaking pitchfork and transcritical bifurcation points, for one- and two-dimensional domains, one of which is illustrated in Figure~\ref{fig:SKT1D_3}. In fact, we will see that while in some cases the numerical
approximations do not provide a clear answer as to which type of bifurcation
point is observed, the computer-assisted proof result resolves the question
immediately.

\begin{figure}[!h]
\centering
      \includegraphics[width=0.7\textwidth]{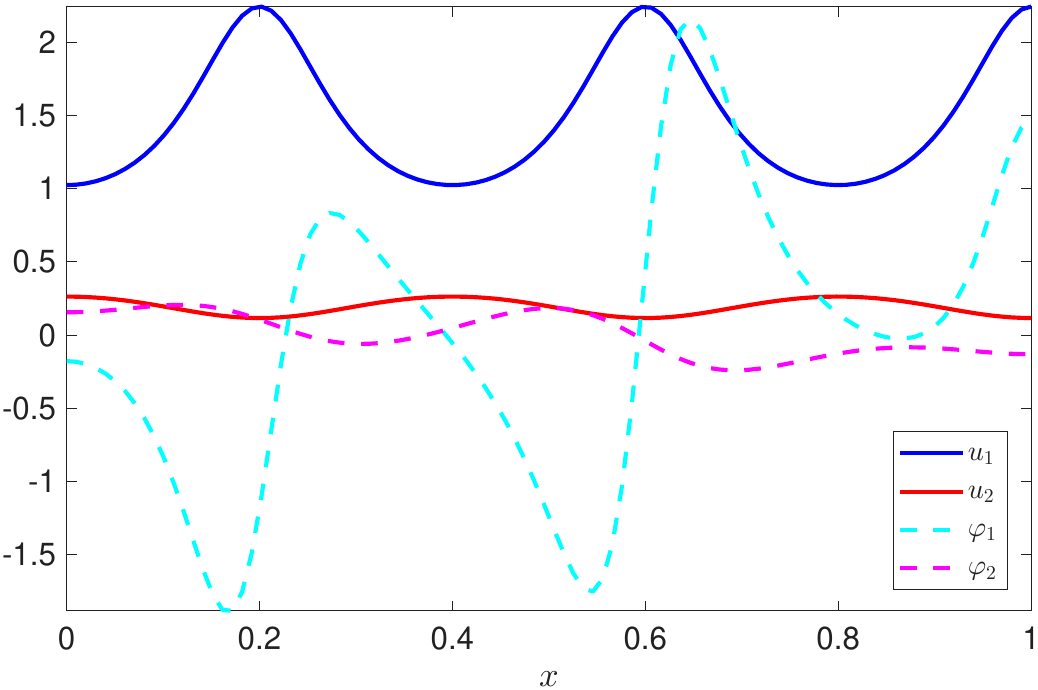}
      \caption{An approximate steady state $(u_1,u_2)$ of the SKT system~\eqref{parabolicsys} with $\Omega=(0,1)$ and parameters as in~\eqref{crossdiffparams}, which undergoes a symmetry-breaking pitchfork bifurcation at $\lambda\approx 3.227\times 10^{-3}$, with approximate eigenfunction $(\phi_1,\phi_2)$. This bifurcation is labeled $G$ in Table~\ref{tab:SKT}. The occurrence of this bifurcation is rigorously established in Theorem~\ref{thm:resultsSKT}.}
  \label{fig:SKT1D_3}
\end{figure}

The remainder of this paper is organized as follows. In Section~\ref{sec:genbif}
we describe the main theoretical results necessary for our approach. It is based
on a general bifurcation result in the sense of Crandall-Rabinowitz, which
provides precise conditions for the occurrence of a transcritical or pitchfork
bifurcation. In contrast to earlier results, the required assumptions focus
on the actual symmetry subspace of the primary branch, rather than the
equivariance operators describing its symmetry. In addition, we show that this
reformulation naturally allows us to use a suitable extended system combined
with a zero finding method to establish the respective bifurcation point. This
result is formulated in general Banach spaces to make it as applicable as 
possible, and we also discuss how its assumptions can be verified in practice.
In Section~\ref{sec:examples} we illustrate our approach by presenting new bifurcation 
results for both the Ohta--Kawasaki equation and the SKT system in dimensions one and two. 
The computer-assisted approach underlying these results is presented in 
Section~\ref{sec:CAPs}, where we explain how to rigorously validate zeros 
of the extended system introduced in Section~\ref{sec:genbif}. Finally the
additional work required to determine whether the obtained bifurcations are of 
transcritical or pitchfork type is described in Section~\ref{sec:bif}.
%
\section{A general bifurcation result}
\label{sec:genbif}
In this section we present a general bifurcation result which can be used to
establish the existence of transcritical or pitchfork bifurcation points in a variety
of symmetry-breaking settings. It is based on the celebrated Crandall-Rabinowitz
result~\cite{crandall:rabinowitz:71a}, which for the sake of completeness is
recalled in Section~\ref{subsec:genbif:crsimple}. Our new result is the subject
of Section~\ref{subsec:genbif:suffcond}. In this section we isolate the crucial
framework necessary for symmetry-breaking bifurcations with a focus on the 
underlying symmetry space, rather than focusing on the equivariance operators.
In Section~\ref{subsec:genbif:ext}, we continue in the spirit
of~\cite{lessard:sander:wanner:17a, rizzi:sander:wanner:24a} and demonstrate
how these symmetry-breaking bifurcations can be detected via root-finding in
a suitable extended system. The result contains  as special cases both
previously-developed cases:  The classical $\Z_2$-symmetry
breaking~\cite{lessard:sander:wanner:17a} and the more general cyclic
symmetry results of~\cite{rizzi:sander:wanner:24a}. In contrast to the
previous results,  this more general framework opens the door to
straightforward applications for partial differential equations on
higher-dimensional domains. Finally, in Section~\ref{subsec:genbif:sym}
we indicate how the main assumption of our bifurcation result can be
validated in practice, thereby establishing its versatility.
\subsection{Classical bifurcation from a simple eigenvalue}
\label{subsec:genbif:crsimple}
One of the central results in early bifurcation theory is the 
Crandall-Rabinowitz theorem on bifurcations from  simple
eigenvalues~\cite{crandall:rabinowitz:71a}.  We briefly recall
it in this section, in order to be able to contrast
the classical theory with our new result given in the next section. In the stated form, the theorem is the Banach
space analogue of the finite-dimensional version given in~\cite[Theorem~5.28,
p.~707]{sander:wanner:pdebook}.
\begin{theorem}[Secondary bifurcation from a simple eigenvalue]
\label{thm:simplesecondarybif}
For Banach spaces~$X$ and~$Y$ consider a smooth function
$F : \R \times X \to Y$, as well as the associated zero finding
problem   
\begin{equation} \label{thm:simplesecondarybif1}
  F(\lambda,u) = 0
  \qquad\mbox{ for }\qquad
  (\lambda,u) \in \R \times X \; .
\end{equation}
Let~$p : I \to X$, where $I \subset \R$ is an open interval, denote
a smooth function which satisfies
\begin{displaymath}
  F(\lambda,p(\lambda)) = 0
  \qquad\mbox{ for all }\qquad
  \lambda \in I \subset \R \; ,
\end{displaymath}
i.e., it establishes a primary solution branch
of~(\ref{thm:simplesecondarybif1}). Suppose further that for
some $\lambdabif \in I$ and $\ubif = p(\lambdabif)$ the Fr\'echet
derivative~$L = D_uF(\lambdabif,\ubif)$ is a Fredholm operator of index
zero which has a simple eigenvalue zero. That is, we assume that the
dimension of the nullspace of~$L$ is~$1$ and
that there is a nonzero element~$\phibif \in X$ such that $N(L) = \mathrm{span}[\phibif]$. 
Let~$\psibif^* \in Y^*$ denote any element of the dual space of~$Y$
for which $R(L) = N(\psibif^*)$. Finally, suppose that
we have
\begin{equation} \label{thm:simplesecondarybif2}
  D_{\lambda u} F(\lambdabif,\ubif)[\phibif] + 
    D_{uu} F(\lambdabif,\ubif)[\phibif,p'(\lambdabif)] \not\in
    R(D_uF(\lambdabif,\ubif)) \; .
\end{equation}
Then the point~$(\lambdabif,\ubif)$ is a bifurcation point for the
nonlinear problem~(\ref{thm:simplesecondarybif1}). Additionally,
the precise bifurcation type can be determined as follows:
\begin{itemize}
\item[(a)] If one has in addition that
\begin{equation} \label{thm:simplesecondarybif3}
  D_{uu} F(\lambdabif,\ubif)[\phibif,\phibif] \not\in
    R(D_uF(\lambdabif,\ubif)) \; ,
\end{equation}
then the point~$(\lambdabif,\ubif)$ is a transcritical bifurcation point.
\item[(b)] On the other hand, if we have
\begin{equation} \label{thm:simplesecondarybif4}
  D_{uu} F(\lambdabif,\ubif)[\phibif,\phibif] \in
    R(D_uF(\lambdabif,\ubif)) \; ,
\end{equation}
as well as

\begin{displaymath}
  \rhobif \; = \;
  \frac{\psibif^* D_{uuu}F(\lambdabif,\ubif)[\phibif,\phibif,\phibif] +
    3 \psibif^* D_{uu}F(\lambdabif,\ubif)[\phibif,\zetabif]}
    {3 \psibif^* D_{\lambda u} F(\lambdabif,\ubif)[\phibif] +
     3 \psibif^* D_{uu} F(\lambdabif,\ubif)[\phibif,p'(\lambdabif)]}
     \; \neq \; 0 \; ,
\end{displaymath}
where $\zetabif \in X$ is any element satisfying
\begin{displaymath}
  D_{uu} F(\lambdabif,\ubif)[\phibif,\phibif] +
  D_uF(\lambdabif,\ubif)[\zetabif] = 0 \; ,
\end{displaymath}
then the point~$(\lambdabif,\ubif)$ is a pitchfork bifurcation point.
If~$\rhobif$ is strictly positive, then the bifurcating solutions exist
for $\lambda < \lambdabif$ close to the bifurcation point, while
for negative~$\rhobif$ they exist for~$\lambda > \lambdabif$. In the
case~$\rhobif = 0$ the local shape of the bifurcating branch cannot be
determined conclusively, yet the bifurcating branch is still orthogonal
to the parameter direction at the bifurcation point.
\end{itemize}
\end{theorem}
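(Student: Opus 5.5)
The plan is to reduce to the classical Crandall--Rabinowitz theorem on bifurcation from a trivial branch, using the shift $G(\lambda,w) = F(\lambda, p(\lambda)+w)$. Then $G(\lambda,0)=0$ for all $\lambda\in I$, and $D_wG(\lambdabif,0)=L$ is Fredholm of index zero with $N(L)=\mathrm{span}[\phibif]$. Since $R(L)$ is closed of codimension one, the functional $\psibif^*$ exists and is unique up to scaling.

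The mixed derivative is
\begin{displaymath}
  D_{\lambda w}G(\lambdabif,0)[\phibif] = D_{\lambda u}F(\lambdabif,\ubif)[\phibif] + D_{uu}F(\lambdabif,\ubif)[\phibif,p'(\lambdabif)] .
\end{displaymath}
Hypothesis~\eqref{thm:simplesecondarybif2} is therefore exactly the Crandall--Rabinowitz transversality condition. The classical theorem then yields a smooth curve $s\mapsto(\lambda(s),w(s))$ with $\lambda(0)=\lambdabif$ and $w(s)=s\phibif+s\,\chi(s)$, where $\chi(0)=0$ and $\chi(s)\in Z$ for a fixed complement $Z$ of $\mathrm{span}[\phibif]$. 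Together with the primary branch, this curve makes up the full zero set near $(\lambdabif,0)$. In particular $(\lambdabif,\ubif)$ is a bifurcation point.

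To classify the bifurcation, I would compute $\lambda'(0)$ and $\lambda''(0)$ by Lyapunov--Schmidt. I set $B(\lambda,s)=s^{-1}G(\lambda,s\phibif+s\chi)$, which is smooth after removing the trivial factor. Expanding $G$ to third order in $w$ and to first order in $\lambda-\lambdabif$, and applying $\psibif^*$ to kill $L$, gives
\begin{displaymath}
  (\lambda-\lambdabif)\,a + \tfrac{s}{2}\,b + \tfrac{s^2}{6}\,c + s\,\psibif^*D_{ww}G[\phibif,\chi] + \dots = 0 .
\end{displaymath}
Here $a = \psibif^*D_{\lambda w}G[\phibif]\neq 0$, $b=\psibif^*D_{uu}F[\phibif,\phibif]$ and $c=\psibif^*D_{uuu}F[\phibif,\phibif,\phibif]$.

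For case (a), $b\neq 0$ is exactly condition~\eqref{thm:simplesecondarybif3}. Then $\lambda'(0) = -b/(2a)\neq 0$, so the branch crosses transversally and the bifurcation is transcritical.

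For case (b), $b=0$, so $\lambda'(0)=0$. The complementary equation $(I-Q)B=0$, with $Q$ the projection onto $R(L)$, gives $\chi'(0)=\tfrac12(\zetabif+\kappa\phibif)$ after projecting onto $Z$. The $\kappa\phibif$ part contributes nothing, since $\psibif^*D_{uu}F[\phibif,\phibif]=0$. Matching the $s^2$ terms yields
\begin{displaymath}
  \tfrac12\lambda''(0)\,a + \tfrac16 c + \tfrac12\psibif^*D_{uu}F[\phibif,\zetabif] = 0 .
\end{displaymath}
Hence $\lambda''(0) = -\tfrac{c+3\psibif^*D_{uu}F[\phibif,\zetabif]}{3a} = -\rhobif$. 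This is independent of the choice of $\zetabif$ for the same reason: $\zetabif$ is determined up to multiples of $\phibif$, and those are annihilated.

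It follows that $\lambda(s)=\lambdabif - \tfrac{\rhobif}{2}s^2+O(s^3)$. If $\rhobif>0$, the bifurcating solutions lie on the side $\lambda<\lambdabif$; if $\rhobif<0$, on the side $\lambda>\lambdabif$; in either case the branch is a pitchfork. If $\rhobif=0$, only $\lambda'(0)=0$ survives, so the branch is still tangent to $\phibif$, i.e.\ orthogonal to the parameter direction, but its shape is undetermined.

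The main obstacle is the bookkeeping in the second-order expansion. It must include all contributions from $\chi'(0)$, from the $\lambda$-dependence of $p$ (these are absorbed because $G$ vanishes identically on $w=0$), and from the mixed $\lambda$--$w$ terms, and it must confirm that these combine into exactly the stated $\rhobif$ with the factor $3$.
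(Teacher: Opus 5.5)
Your proposal is correct and follows essentially the same Lyapunov--Schmidt route the paper relies on. The paper does not prove this theorem itself (it is quoted as the Banach-space analogue of a finite-dimensional textbook result), but the reduction it carries out in the proof of Theorem~\ref{genbif:thm:suffcond} (bifurcation function $b(\lambda,\alpha)$, division by $\alpha$, and the implicit function theorem for $\lambda=h(\alpha)$) produces exactly your $\lambda'(0)=-\frac{1}{2a}\psibif^*D_{uu}F(\lambdabif,\ubif)[\phibif,\phibif]$ and $\lambda''(0)=-\rhobif$. There $\xibif$ plays the role of your $p'(\lambdabif)$, which your shift $G(\lambda,w)=F(\lambda,p(\lambda)+w)$ brings in through the chain rule.

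One small slip: if $Q$ projects onto $R(L)$, the range equation that determines $\chi'(0)$ is $QB=0$, not $(I-Q)B=0$, which is the scalar bifurcation equation. This is harmless here, since $D_{uu}F(\lambdabif,\ubif)[\phibif,\phibif]\in R(L)$ in case~(b).
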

The above theorem is remarkable in its simplicity. By just assuming
the existence of a solution branch $u = p(\lambda)$, and a noninvertible
Fr\'echet derivative at a point on this branch with simple zero 
eigenvalue, as well as the generically satisfied nondegeneracy
condition~(\ref{thm:simplesecondarybif2}), the result guarantees
a bifurcating solution branch. Parts~(a) and~(b) are basically just
window dressing in that they establish the nature of this new branch
in more detail. Nevertheless, while this is a remarkable {\em theoretical\/}
result, its usefulness hinges on knowledge of the primary solution
branch $u = p(\lambda)$ --- and this fact severely restricts its 
applicability in concrete situations. The only exception to this comment
is an existing {\em trivial solution branch\/} $p \equiv 0$, and in fact,
in many texts the above result is only formulated for this special
situation.

In the context of computer-assisted proofs,
Theorem~\ref{thm:simplesecondarybif} poses challenges. The primary branch
is usually only known in an approximate way, and therefore special care
has to be taken to obtain tight enough error estimates so that the nondegeneracy
condition can be verified at the (unknown) precise bifurcation point location.
While this can be achieved in many cases, see~\cite{kamimoto:kim:sander:wanner:22a}
for an example, it becomes significantly more difficult if one is interested in 
rigorously establishing branches of bifurcation points as
in~\cite{lessard:sander:wanner:17a}. Consequently, in our previous
work~\cite{lessard:sander:wanner:17a, rizzi:sander:wanner:24a}
we only considered the Ohta--Kawasaki model with zero total mass and
odd nonlinearity, which in turn provided additional information on the
primary branch. Essentially, this allowed us to consider a problem with
trivial solution after a suitable transformation. Yet, this special 
situation also implied the automatic validity
of~(\ref{thm:simplesecondarybif4}), which explains why these papers
only considered the case of pitchfork bifurcations.
\subsection{A sufficient condition for symmetry-breaking bifurcation}
\label{subsec:genbif:suffcond}
In order to turn Theorem~\ref{thm:simplesecondarybif} into a result which
is more flexible and general but also amenable to the use of computer-assisted proofs, it needs to be
reformulated in such a way that the existence of the primary branch does not 
need to be verified separately, and simply 
becomes a part of the theorem, with conditions which can be checked at a single fixed
parameter value. This
will be accomplished in the present section.

We begin by presenting a modified sufficient condition that ensures the
occurrence of a transcritical or a pitchfork bifurcation in nonlinear
problems. Rather than considering the generality of the above
Crandall-Rabinowitz result, we now restrict our attention to the
situation of {\em symmetry-breaking\/}. For our result below, we
need two assumptions, which we now state. 

The first part of Assumption~\ref{genbif:assump:nec} formalizes the statement that at the
point~$(\lambdabif, \ubif)$ the implicit function theorem fails, and therefore
this point has the potential to be a bifurcation point. The additional parts
of Assumption~\ref{genbif:assump:nec} give rise to a situation which
stays close to the one of finite-dimensional linear algebra.
 It is given as follows.
\begin{assumption}[Necessary condition for bifurcation]
\label{genbif:assump:nec}
Let~$X$ and~$Y$ denote two Banach spaces, and suppose that $F: \R \times X
\to Y$ is a smooth nonlinear operator. Furthermore, assume that there exists
a pair~$(\lambdabif, \ubif) \in \R \times X$ with
\begin{equation} \label{genbif:assump:nec1}
    F(\lambdabif, \ubif) = 0 \; ,
\end{equation}
and that the Fr\'echet derivative $L = D_uF(\lambdabif, \ubif) \in \cL(X,Y)$ is
a Fredholm operator of index zero. Finally, suppose that the nullspace
of~$L$ is one-dimensional and that we have both
\begin{equation} \label{genbif:assump:nec2}
    N(L) = \mathrm{span}[\phibif]
    \quad\mbox{ and }\quad
    R(L) = N(\psibif^*)
\end{equation}
for some nonzero elements $\phibif \in X$ and $\psibif^* \in Y^*$, where~$Y^*$
is the dual space of~$Y$.
\end{assumption}

While Assumption~\ref{genbif:assump:nec} above describes  the general
framework that is necessary for the Crandall-Rabinowitz approach, 
Assumption~\ref{genbif:assump:sym} below focuses specifically on the aspect
of symmetry-breaking. In particular, we assume the existence of closed
symmetry subspaces~$X_{\sym}$ and~$Y_{\sym}$  which are preserved by~$F$,
along with a Fredholm condition on how~$L$ acts on these spaces. 
\begin{assumption}[Symmetry-breaking]
\label{genbif:assump:sym}
Suppose that Assumption~\ref{genbif:assump:nec} is satisfied, and that there
exist closed symmetry subspaces $X_{\sym} \subset X$ and $Y_{\sym} \subset Y$ with
$F(\R, X_{\sym}) \subset Y_{\sym}$. In addition, assume that the restriction
$L|_{X_{\sym}} : X_{\sym} \to Y_{\sym}$ is again a Fredholm operator of index zero,
and that we have both
\begin{equation} \label{genbif:assump:sym1}
    \ubif \in X_{\sym}
    \quad\mbox{ and }\quad
    \phibif \not\in X_{\sym} \; .
\end{equation}
\end{assumption}

We would like to emphasize that an especially crucial aspect
of Assumption~\ref{genbif:assump:sym} is the fact that the restriction
$L|_{X_{\sym}} : X_{\sym} \to Y_{\sym}$ is again a Fredholm operator of
index zero. Note that this is by no means automatic. For example, there
are situations where one could choose a finite-dimensional~$X_{\sym}$
together with $Y_{\sym} = Y$, and in the case of infinite-dimensional~$Y$
this would violate the finite codimension of~$L(X_{\sym})$. Even if the
restricted operator~$L|_{X_{\sym}} : X_{\sym} \to Y_{\sym}$ is Fredholm,
one can easily come up with examples in which its index is nonzero, by
choosing the space~$Y_{\sym}$ too large. Nevertheless, as we will elaborate in
Section~\ref{subsec:genbif:sym}, in our applications the verification
of Assumption~\ref{genbif:assump:sym} amounts to little more than considering
the differential operator describing~$L$ for functions which are defined on a
suitable subdomain of~$\Omega$.

Notice also that Assumption~\ref{genbif:assump:sym} does not
rely on a priori knowledge of a primary solution branch for the nonlinear
problem $F(\lambda, u) = 0$. In fact, its formulation enables us to
prove the existence of such a branch as part of the theorem, using only 
information at a single point~$(\lambdabif,\ubif)$. In this
way, the more specific requirements in Assumptions~\ref{genbif:assump:nec}
and~\ref{genbif:assump:sym} remove one of the main disadvantages of
Theorem~\ref{thm:simplesecondarybif}.

The following result confirms that the combination of the above two assumptions
provides a sufficient condition for the existence of a transcritical or pitchfork
bifurcation. In fact, the only additional requirement is a mild nondegeneracy
condition.
\begin{theorem}[Symmetry breaking bifurcations]
\label{genbif:thm:suffcond}
Suppose that Assumptions~\ref{genbif:assump:nec} and~\ref{genbif:assump:sym}
are satisfied, i.e., in particular we have $F(\lambdabif,\ubif) = 0$, 
$L\phibif = D_u F(\lambdabif,\ubif)[\phibif] = 0$, the nullspace of~$L$ is
one-dimensional, $\ubif \in X_{\sym}$, and $\phibif \not\in X_{\sym}$. Under these
assumptions, there exists a unique function $\xibif \in X_{\sym}$ such that
\begin{equation} \label{genbif:thm:suffcond1}
  L\xibif + D_\lambda F(\lambdabif,\ubif) = 0 \; . 
\end{equation}
Under the further assumption that the nondegeneracy condition
\begin{equation} \label{genbif:thm:suffcond2}
  D_{\lambda u} F(\lambdabif,\ubif)[ \phibif ] +
    D_{uu} F(\lambdabif,\ubif)[\phibif,\xibif] \notin R(L)
\end{equation}
is satisfied, the point~$(\lambdabif,\ubif)$ is a bifurcation point,
and the following hold:
\begin{itemize}
\item[(a)] If $D_{uu}F(\lambdabif,\ubif)[\phibif,\phibif] \not\in R(L)$,
then there is a transcritical bifurcation at~$(\lambdabif,\ubif)$.
\item[(b)] If $D_{uu}F(\lambdabif,\ubif)[\phibif,\phibif]
\in R(L)$, then there is a pitchfork bifurcation at~$(\lambdabif,\ubif)$.
\end{itemize}
In both cases, the bifurcating solution branch is not a subset of the
symmetry space~$X_{\sym}$. That is, the point~$(\lambdabif,\ubif)$ is a symmetry-breaking
bifurcation point.
\end{theorem}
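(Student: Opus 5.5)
The plan is to reduce everything to Theorem~\ref{thm:simplesecondarybif}. To do so, we first construct the primary branch inside the symmetry space with the implicit function theorem, and then check the hypotheses of the classical result.

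\emph{Step 1: $L|_{X_{\sym}}$ is invertible.} Let $L_s = L|_{X_{\sym}} : X_{\sym} \to Y_{\sym}$. Its kernel is $N(L)\cap X_{\sym} = \mathrm{span}[\phibif]\cap X_{\sym}$, which is trivial because $\phibif \notin X_{\sym}$. Since $L_s$ is Fredholm of index zero, it is therefore a bijection, and by the bounded inverse theorem it has a bounded inverse. Since $F(\R,X_{\sym})\subset Y_{\sym}$ and $Y_{\sym}$ is closed, differentiating $\lambda\mapsto F(\lambda,\ubif)$ gives $D_\lambda F(\lambdabif,\ubif)\in Y_{\sym}$. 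Hence $\xibif := -L_s^{-1} D_\lambda F(\lambdabif,\ubif)$ exists, and it is the unique solution of~(\ref{genbif:thm:suffcond1}) in $X_{\sym}$.

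\emph{Step 2: the primary branch.} Apply the implicit function theorem to the restricted map $F_s:\R\times X_{\sym}\to Y_{\sym}$. It is smooth, and $D_uF_s(\lambdabif,\ubif)=L_s$ is an isomorphism. This yields an open interval $I\ni\lambdabif$ and a smooth $p:I\to X_{\sym}$ with $p(\lambdabif)=\ubif$ and $F(\lambda,p(\lambda))=0$. Differentiating at $\lambdabif$ gives $L p'(\lambdabif)+D_\lambda F(\lambdabif,\ubif)=0$ with $p'(\lambdabif)\in X_{\sym}$. By the uniqueness in Step~1, $p'(\lambdabif)=\xibif$. Condition~(\ref{genbif:thm:suffcond2}) is then precisely~(\ref{thm:simplesecondarybif2}). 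The Fredholm and simple-kernel hypotheses are supplied by Assumption~\ref{genbif:assump:nec}. Theorem~\ref{thm:simplesecondarybif} therefore gives a bifurcation at $(\lambdabif,\ubif)$, and case (a) follows directly from its part~(a).

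\emph{Step 3: the pitchfork case and symmetry breaking.} This is the main obstacle. Theorem~\ref{thm:simplesecondarybif}(b) additionally requires $\rhobif\neq0$, which is not assumed here. I would therefore read ``pitchfork'' as in the final clause of that theorem: the bifurcating branch is tangent to $\phibif$ and orthogonal to the parameter direction at the bifurcation point. I would obtain this from the Crandall--Rabinowitz parametrization. The bifurcating branch is $s\mapsto(\lambda(s),\,p(\lambda(s))+s\phibif+s\,w(s))$ with $w(0)=0$, and $w(s)$ lies in a complement of $\mathrm{span}[\phibif]$.

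The Lyapunov--Schmidt expansion gives
\[
\lambda'(0)=-\tfrac12\,\frac{\psibif^*D_{uu}F(\lambdabif,\ubif)[\phibif,\phibif]}{\psibif^*\bigl(D_{\lambda u}F(\lambdabif,\ubif)[\phibif]+D_{uu}F(\lambdabif,\ubif)[\phibif,\xibif]\bigr)} .
\]
Its denominator is nonzero by~(\ref{genbif:thm:suffcond2}). In case (a) this gives $\lambda'(0)\neq0$, so the branch crosses transversally. In case (b) it gives $\lambda'(0)=0$, which yields the pitchfork shape. If a strict $\rhobif\neq0$ statement is intended, I would expect to need an extra symmetry argument, namely that the branch is invariant under $s\mapsto -s$; but only subspace information is available, so I would settle for the tangency statement.

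Finally, the new branch leaves $X_{\sym}$. Its tangent vector at $s=0$ is $\phibif$ modulo the $\lambda'(0)\xibif$ component, and $\phibif\notin X_{\sym}$. Moreover $X_{\sym}$ is closed. So for small $s\neq0$ the point $u(s)-\ubif$ is not in $X_{\sym}$: otherwise dividing by $s$ and passing to the limit would place $\phibif+\lambda'(0)\xibif$ in $X_{\sym}$, and hence $\phibif$ as well. This proves that the bifurcation point is symmetry-breaking.
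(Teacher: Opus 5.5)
Your proof is correct, but it takes a different route from the paper. You build the symmetric branch $p$ first, identify $p'(\lambdabif)=\xibif$ through uniqueness in $X_{\sym}$, and then apply Theorem~\ref{thm:simplesecondarybif} as a black box, so that \eqref{genbif:thm:suffcond2} becomes literally \eqref{thm:simplesecondarybif2}.

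The paper instead carries out the Lyapunov--Schmidt reduction directly. It chooses a complement $\tilde{X}$ of $N(L)$ that contains $X_{\sym}$: the kernel of a Hahn--Banach functional vanishing on $X_{\sym}$ but not on $\phibif$. The symmetric branch is used only to show $W(\lambda,0)=p(\lambda)$, hence $b(\lambda,0)\equiv 0$. The reduced function $r=b/\alpha$ is then solved as $\lambda=h(\alpha)$, with $h'(0)$ and $h''(0)$ computed explicitly.

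Your route makes clear that the only new ingredient beyond Crandall--Rabinowitz is that the symmetry hypotheses produce the primary branch and its tangent from data at a single point, and case (a) comes for free. However, as you noticed, Theorem~\ref{thm:simplesecondarybif}(b) requires $\rhobif\neq 0$. So for (b) you still have to redo the Lyapunov--Schmidt computation of $\lambda'(0)$ for the shifted map $(\lambda,v)\mapsto F(\lambda,p(\lambda)+v)$. You assert that formula rather than derive it, at about the same level of detail as the paper's appeal to the derivative table of its earlier work.

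Your reading of ``pitchfork'' as $\lambda'(0)=0$ is exactly the paper's convention; it explicitly still calls the degenerate case $h''(0)=0$ a pitchfork. So no extra argument is needed. Note also that invariance under $s\mapsto -s$ would only make $\lambda(s)$ even, not force $\rhobif\neq 0$, which is a genuinely separate nondegeneracy condition.

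Finally, the paper's adapted complement makes symmetry breaking automatic: any $\alpha\phibif+W$ with $\alpha\neq 0$ lies outside $\tilde{X}\supset X_{\sym}$. It also yields the formula \eqref{genbif:thm:suffcond7} for $h''(0)$. Your closedness-and-limit argument is a valid substitute for the symmetry-breaking part.
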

\begin{proof}
We first show that~(\ref{genbif:thm:suffcond1}) has a unique solution
$\xibif \in X_{\sym}$. To see this, note that the inclusion $F(\R,X_{\sym}) \subset
Y_{\sym}$ and $\ubif \in X_{\sym}$ immediately imply $D_\lambda F(\lambdabif,\ubif)
\in Y_{\sym}$. Due to $\phibif \not\in X_{\sym}$ the restriction~$L|_{X_{\sym}} :
X_{\sym} \to Y_{\sym}$ is one-to-one. Since this restricted operator is a one-to-one 
index zero Fredholm operator, it is also onto. 
This establishes the existence of a solution~$\xibif \in X_{\sym}$ 
solving~\eqref{genbif:thm:suffcond1}. Since
the difference of any two such solutions is contained in~$N(L) \cap X_{\sym} =
\{ 0 \}$, the uniqueness follows.

The proof of the second part of the theorem is based on a standard 
Lyapunov-Schmidt reduction argument. Let~$\tilde{Y}$ denote an 
arbitrary one-dimensional subspace of~$Y$
such that $Y = R(L) \oplus \tilde{Y}$. In view of $\phibif \not\in X_{\sym}$,
there is a closed subspace~$\tilde{X} \subset X$ such that both $X_{\sym} \subset
\tilde{X}$ and $X = N(L) \oplus \tilde{X}$ are satisfied. Such a space can
be defined as~$\tilde{X} = N(\psi^*)$, where $\psi^* \in X^*$ is any functional
which vanishes on~$X_{\sym}$ and is nontrivial on~$N(L) = \mathrm{span}[\phibif]$,
and which exists due to the Hahn-Banach theorem. Finally, let $P : Y \to Y$
and $Q : X \to X$ denote the continuous projectors with $R(P) = \tilde{Y}$
and $N(P) = R(L)$, as well as $R(Q) = N(L)$ and $N(Q) = \tilde{X}$. In
other words, both projectors have rank one.

The properties of~$P$ and~$Q$ imply that for any $u \in X$ the elements
$v = Qu$ and $w = (I-Q)u$ satisfy $u = v + w \in N(L) \oplus \tilde{X}$.
Furthermore, the nonlinear equation $F(\lambda,u) = 0$ is equivalent to
the system
\begin{equation} \label{genbif:thm:suffcond3}
  P F(\lambda,v+w) = 0
  \qquad\mbox{ and }\qquad
  G(\lambda,v,w) := (I-P) F(\lambda,v+w) = 0 \; ,
\end{equation}
where $G: \R \times N(L) \times \tilde{X} \to R(L)$. 
If we write $\ubif = v_* + w_* \in N(L) \oplus \tilde{X}$, then
$\ubif \in X_{\sym} \subset \tilde{X}$ yields both $v_* = 0$ and
$w_* = \ubif$. Moreover, the Fr\'echet
derivative~$D_wG(\lambdabif,0,\ubif)$ satisfies
\begin{displaymath}
    D_wG(\lambdabif,0,\ubif) =
    (I-P) D_uF(\lambdabif,\ubif)|_{\tilde{X}} \in
    \cL(\tilde{X}, R(L)) \; . 
\end{displaymath}
The fact that $N(L) \cap \tilde{X} = \{ 0 \}$ implies that this 
Fr\'echet derivative is invertible. An application of the implicit
function theorem guarantees the existence of open neighborhoods
$\lambdabif \in \Lambda_0 \subset \R$ and $0 \in V_0 \subset N(L)$,
as well as $\ubif \in \tilde{X}_0 \subset \tilde{X}$, 
and a smooth function $W : \Lambda_0 \times V_0 \to
\tilde{X}_0$,  
such that
for all $(\lambda,v) \in \Lambda_0 \times V_0$ one has
\begin{equation} \label{genbif:thm:suffcond4}
  G(\lambda, v, W(\lambda,v)) =
  (I-P) F(\lambda,v+W(\lambda,v)) = 0 \; ,
\end{equation}
and such that~$W(\lambda,v) \in \tilde{X}_0$ is the unique value 
in $\tilde{X}_0$
with
this property. Note that this implies in particular the
identity $W(\lambdabif,0) = \ubif$.

In order to solve the full original problem $F(\lambda,u) = 0$,
one simultaneously also has to solve the first equation
in~(\ref{genbif:thm:suffcond3}). Since~$N(L)$ is one-dimensional,
every $v \in N(L)$ can be uniquely written as $\alpha \phibif$, where $\alpha \in \R$. 
Now consider the bifurcation function~$b: \R^2 \to \R$ defined via
\begin{equation} \label{genbif:thm:suffcond5}
  b(\lambda,\alpha) :=
  \psibif^* PF(\lambda, \alpha \phibif + W(\lambda,\alpha \phibif))
  \; ,
\end{equation}
where~$\psibif^*$ was introduced in~(\ref{genbif:assump:nec2}).
Then one has $F(\lambda,v+w) = 0$ for~$(\lambda,v,w) \in \Lambda_0
\times V_0 \times \tilde{X}_0$ if and only if both $b(\lambda,\alpha) = 0$
and $u = v+w = \alpha \phibif + W(\lambda,\alpha \phibif)$ are satisfied.

We now show that the bifurcation equation $b(\lambda,\alpha) = 0$ has the
solution~$(\lambda,0)$ for all~$\lambda$ sufficiently near~$\lambdabif$. To see this,
consider the restricted operator $F : \R \times X_{\sym} \to Y_{\sym}$,
which has the zero $(\lambdabif,\ubif) \in \R \times X_{\sym}$. The Fr\'echet
derivative at this point is given
by the restriction~$L|_{X_{\sym}} : X_{\sym} \to Y_{\sym}$, and in view of $\phibif \notin X_{\sym}$ and
the fact that it is Fredholm with index zero it also has a continuous
inverse. Thus, after possibly reducing the size of~$\Lambda_0$,
one can apply the implicit function theorem to obtain a smooth function
$p : \Lambda_0 \to X_{\sym}$ which satisfies $F(\lambda,p(\lambda)) = 0$ for
all $\lambda \in \Lambda_0$. 
For each fixed $\lambda$, the value~$W(\lambda,0) \in
\tilde{X}_0$ is uniquely determined by~(\ref{genbif:thm:suffcond4}).
Since $p(\lambda) \in X_{\sym} \subset \tilde{X}$, this 
immediately yields that  $W(\lambda,0) = p(\lambda)$ and $b(\lambda,0) =
\psibif^* PF(\lambda, p(\lambda)) = 0$ for all $\lambda \in \Lambda_0$,
again after potentially reducing the size of the open
set~$\Lambda_0 \ni \lambdabif$.

The trivial solution immediately furnishes the identity
$\partial b/\partial \lambda^k(\lambda,0) = 0$ for all~$k$, and as in the proof
of~\cite[Theorem~3.2]{rizzi:sander:wanner:24a} one can verify
$b_\alpha(\lambdabif,0) = 0$. Consider now the function
\begin{displaymath}
  r(\lambda,\alpha) = \left\{ \begin{array}{rcl}
                        \DS \frac{b(\lambda,\alpha)}{\alpha} & \mbox{for} &
                          \alpha \neq 0 \; , \\[3ex]
                        \DS b_\alpha(\lambda,0)
                          & \mbox{for} & \alpha = 0 \; ,
                      \end{array} \right.
\end{displaymath}
which is defined and smooth in a neighborhood of~$(\lambdabif,0)$, and
which will be used to construct the bifurcating branch which breaks the
$X_{\sym}$-symmetry. As in~\cite[Proposition~2.11]{lessard:sander:wanner:17a}
one can show that due to $r(\lambdabif, 0) = b_\alpha(\lambdabif,0) = 0$
the function~$r$ has the expansion
\begin{eqnarray*}
    r(\lambda, \alpha) & = &  
    (\lambda - \lambdabif) \, b_{\lambda\alpha}(\lambdabif,0) +
    \frac{1}{2} \, \alpha \, b_{\alpha\alpha}(\lambdabif,0) +
    \frac{1}{2} \, (\lambda - \lambdabif)^2 \,
      b_{\lambda\lambda\alpha}(\lambdabif,0) + \\[1ex]
    & & \quad +
    \frac{1}{2} \, (\lambda - \lambdabif) \alpha \,
      b_{\lambda\alpha\alpha}(\lambdabif,0) +
  \frac{1}{6} \, \alpha^2 \, b_{\alpha\alpha\alpha}(\lambdabif,0) +
    R(\lambda - \lambdabif, \alpha)
\end{eqnarray*}
with $R(\nu,\alpha) = O(\| (\nu,\alpha) \|^3)$. 
Note that since we do not assume any even or odd symmetry, we 
have the additional~$b_{\alpha \alpha}$ term compared to~\cite{lessard:sander:wanner:17a}. 
Now let~$\xibif \in X_{\sym}$ be defined by the equation
$L\xibif + (I-P) D_\lambda F(\lambdabif,\ubif) = 0$. 
The value is unique since $L|_{X_{\sym}}$ is continuously invertible.
Notice, however, that then~$\xibif$ also satisfies~(\ref{genbif:thm:suffcond1}),
since $\ubif \in X_{\sym}$ and $F(\R,X_{\sym}) \subset Y_{\sym}$ 
yield
the inclusion $D_\lambda F(\lambdabif,\ubif) \in Y_{\sym} \subset R(L)$,
and therefore $PD_\lambda F(\lambdabif,\ubif) = 0$.
According
to~\cite[Table~1]{lessard:sander:wanner:17a} along with the 
nondegeneracy condition~\eqref{genbif:thm:suffcond2} we have
\begin{equation}\label{genbif:thm:suffcond2B}
  r_{\lambda}(\lambdabif,0) = b_{\lambda\alpha}(\lambdabif,0) =
  \psibif^* D_{\lambda u}F(\lambdabif,\ubif)[\phibif] +
  \psibif^* D_{uu}F(\lambdabif,\ubif)[\phibif,\xibif] \neq 0 \; . 
\end{equation}
An application of the implicit function theorem to $r(\lambda,
\alpha) = 0$ then yields a smooth real-valued function~$\alpha \mapsto
h(\alpha)$, which is defined on an open interval around~$0$,
satisfies $h(0) = \lambdabif$, and such that in a neighborhood
of~$(\lambdabif,0)$ we have $r(\lambda, \alpha) = 0$ if and only
if $\lambda = h(\alpha)$. This establishes the second solution
branch $\alpha \mapsto \alpha \phibif + W(h(\alpha), \alpha \phibif)$.

It remains to establish the precise bifurcation type. The
bifurcation equation $b(\lambda,\alpha) = 0$ has both a trivial
solution indexed by~$\lambda$, which corresponds to the branch of
symmetric solutions, and a secondary branch indexed by~$\alpha$, which
consists of solutions breaking the primary symmetry. Thus, the specific
bifurcation type is determined by the value of the derivative~$h'(0)$.
If we have $h'(0) \neq 0$, then the bifurcation is transcritical,
and it is of pitchfork type if one has $h'(0) = 0$. In the latter case,
it is supercritical for $h''(0) > 0$, subcritical if $h''(0) < 0$,
and degenerate for $h''(0) = 0$. In fact, in the final case it might
not have the typical pitchfork shape, as the shape is determined 
by the first nonzero derivative of~$h$ at~$0$. Nevertheless, we
still refer to it as a pitchfork bifurcation.

In order to find the first two derivatives of the function~$h$
describing the secondary branch we twice differentiate its defining
equation, i.e., the identity
\begin{displaymath}
    r(h(\alpha), \alpha) = 0
    \quad\mbox{ for all~$\alpha$ near~$0$.}
\end{displaymath}
This leads to the equation
\begin{displaymath}
    r_\lambda(h(\alpha),\alpha) h'(\alpha) +
    r_\alpha(h(\alpha),\alpha) = 0 \; ,
\end{displaymath}
as well as to
\begin{eqnarray}\label{hdoubleprime}
    & & r_\lambda(h(\alpha),\alpha) h''(\alpha) +
    r_{\lambda\lambda}(h(\alpha),\alpha) h'(\alpha)^2 +
    r_{\lambda\alpha}(h(\alpha),\alpha) h'(\alpha) + \\[1ex] \nonumber
    & & \qquad + \,
    r_{\alpha\lambda}(h(\alpha),\alpha) h'(\alpha) +
    r_{\alpha\alpha}(h(\alpha),\alpha) = 0 \; .
\end{eqnarray}
Evaluating the first equation at $\alpha = 0$ yields, together
with~\cite[Table~1]{lessard:sander:wanner:17a}, the explicit 
formula
\begin{equation} \label{genbif:thm:suffcond6}
    h'(0) \; = \; -\frac{r_\alpha(\lambdabif,0)}{r_\lambda(\lambdabif,0)} = 
    -\frac{\frac{1}{2} \psibif^* D_{uu}F(\lambdabif,\ubif)[\phibif,\phibif]}
      {\psibif^* D_{\lambda u}F(\lambdabif,\ubif)[\phibif] +
       \psibif^* D_{uu}F(\lambdabif,\ubif)[\phibif,\xibif]} \; ,
\end{equation}
which by \eqref{genbif:thm:suffcond2B} has a nonzero denominator. This establishes
both~{\it (a)\/} and~{\it (b)}. For later reference, we also note
that if one assumes $h'(0) = 0$ and evaluates 
\eqref{hdoubleprime} at $\alpha = 0$,
then~\cite[Table~1]{lessard:sander:wanner:17a} furnishes
\begin{equation} \label{genbif:thm:suffcond7}
  h''(0) \; = \; -\frac{r_{\alpha \alpha}(\lambdabif,0)}{r_\lambda(\lambdabif,0)} = 
 - \frac{\frac{1}{3}\left(\psibif^* D_{uuu}F(\lambdabif,\ubif)[\phibif,\phibif,\phibif] +
    3 \psibif^* D_{uu}F(\lambdabif,\ubif)[\phibif,\eta_*] \right)}
    {  \psibif^* D_{\lambda u} F(\lambdabif,\ubif)[\phibif] + 
       \psibif^* D_{uu} F(\lambdabif,\ubif)[\phibif,\xibif]} \; ,
\end{equation}
where $\eta_* \in \tilde{X}$ is the unique element satisfying
\begin{equation} \label{genbif:thm:suffcond8}
    L \eta_* + 
    (I-P) D_{uu} F(\lambdabif,\ubif)[\phibif,\phibif] = 0 \; .
\end{equation}
This completes the proof of the theorem.
\end{proof}

\medskip
Note that in contrast to the classical approach to
$\Z_2$-symmetry breaking, as well as to our previous results
in~\cite{lessard:sander:wanner:17a, rizzi:sander:wanner:24a},
the above theorem does not require knowledge of a full space
decomposition of~$X$ or~$Y$. Consequently there is also no
requirement on~$L$ to have all the spaces in the decomposition
as invariant subspaces. The {\em only\/} requirement is the
invariance of the symmetry space coupled with the Fredholm property,
and this renders Theorem~\ref{genbif:thm:suffcond} much more applicable.
It needs to be mentioned, however, that in specific applications
the knowledge of such a space decomposition can be extremely
useful --- and this will be the case in our main applications
in this paper. For example, equivariance in combination with
a suitable space decomposition might allow one to simplify the
defining equation~(\ref{genbif:thm:suffcond8}) for~$\eta_*$
by omitting the projector~$P$, as mentioned for example
in~\cite[Remark~2.13]{lessard:sander:wanner:17a}.
\subsection{Detecting bifurcations via extended systems}
\label{subsec:genbif:ext}
While Theorem~\ref{genbif:thm:suffcond} is a typical sufficient
result for establishing the existence of certain bifurcation points,
its assumptions are not readily verifiable in all situations. For
example, if one is interested in studying secondary bifurcations
from a nontrivial branch of solutions, one rarely has access to
the functions~$\ubif$ and~$\phibif$ in Assumption~\ref{genbif:assump:nec},
or even the precise parameter value~$\lambdabif$ of the potential
bifurcation point.

To address this problem, the current section is devoted to reformulating
the sufficient condition of Theorem~\ref{genbif:thm:suffcond} in the
form of a zero-finding problem for a suitable extended system. Our
approach follows the ideas from~\cite{lessard:sander:wanner:17a,
rizzi:sander:wanner:24a}, but has been extended to the more
general setting of this paper.

For this, consider two Banach spaces~$X$ and~$Y$, as well as a smooth
parameter-dependent operator $F : \R \times X \to Y$. In addition, let
$X_{\sym} \subset X$ and $Y_{\sym} \subset Y$ denote two closed symmetry subspaces
such that 
\begin{equation}\label{eqn:inclusionf}
F(\R, X_{\sym}) \subset Y_{\sym} \; ,
\end{equation} 
and choose a fixed nontrivial
element $\ell \in X^*$. Then in the following we consider the
{\em extended system for~$F$\/} given by
\begin{equation} \label{genbif:eqn:extsys1}
  \cF(\lambda,u,\phi) = (0,0,0)
  \quad\mbox{with}\quad
  \cF : \left\{ 
    \begin{array}{c}
      \R \times X_{\sym} \times X \to \R \times Y_{\sym} \times Y \\[0.5ex]
      (\lambda,u,\phi) \mapsto (\ell(\phi)-1, F(\lambda,u),
        D_uF(\lambda,u)[\phi]) .
    \end{array}
  \right.
\end{equation}
The operator~$\cF$ is well-defined in view of the inclusion
\eqref{eqn:inclusionf}. Moreover, its Fr\'echet derivative
in $\cL(\R \times X_{\sym} \times X, \, \R \times Y_{\sym} \times Y)$
satisfies
\begin{eqnarray}
  D\cF(\lambda, u, \phi)[\tilde{\lambda},\tilde{u},\tilde{\phi}] & = &
    \left( \ell(\tilde{\phi}) \; , \;\;
    \tilde{\lambda} D_\lambda F(\lambda,u) + D_u F(\lambda,u)[\tilde{u}] ,
    \right. \nonumber \\[0.5ex]
  & & \;\;\;\left. \tilde{\lambda} D_{\lambda u} F(\lambda,u)[\phi] +
    D_{uu} F (\lambda,u)[\phi,\tilde{u}] + D_u F(\lambda,u)[\tilde{\phi}]
    \right) .
    \label{genbif:eqn:extsys2}
\end{eqnarray}
One can readily see that if~$(\lambdabif, \ubif, \phibif)$ is a zero of
the extended system, i.e., if we have the identity $\cF(\lambdabif, \ubif,
\phibif) = 0$, then~$(\lambdabif, \ubif)$ is a symmetric zero of~$F$,
and~$\phibif$ is a kernel function of the Fr\'echet derivative of~$F$
at this zero. As the following result shows, if we add both the
nondegeneracy of the zero of the extended system and the condition
$\phibif \not\in X_{\sym}$ to the mix, then we have actually established
a symmetry-breaking bifurcation point.
\begin{theorem}[Symmetry-breaking bifurcations via extended systems]
\label{genbif:thm:extsys}
Let~$X$ and~$Y$ be two Banach spaces, and suppose that the
parameter-dependent operator $F : \R \times X \to Y$ is smooth.
Moreover, let $X_{\sym} \subset X$ and $Y_{\sym} \subset Y$ be closed
subspaces with $F(\R, X_{\sym}) \subset Y_{\sym}$. Finally, assume that
for every~$\lambda \in \R$ and every~$u \in X_{\sym}$ the Fr\'echet
derivative~$D_u F(\lambda,u) \in \cL(X,Y)$ and its restriction
$D_u F(\lambda,u)|_{X_{\sym}} \in \cL(X_{\sym},Y_{\sym})$ are Fredholm operators
of index zero. Then we have:
\begin{itemize}
\item[(a)] Suppose that all the assumptions of
Theorem~\ref{genbif:thm:suffcond} hold, and choose the
functional~$\ell \in X^*$ in such a way that $\ell(\phibif) = 1$.
Then the Fr\'echet derivative~$D\cF(\lambdabif,\ubif,\phibif)$ of the
operator defined in~(\ref{genbif:eqn:extsys1}) is invertible, i.e.,
the solution~$(\lambdabif,\ubif,\phibif) \in \R \times X_{\sym} \times X$
of the extended system
\begin{equation} \label{genbif:thm:extsys1}
  \cF(\lambda,u,\phi) = (0,0,0)
\end{equation}
is an isolated nondegenerate zero.
\item[(b)] Conversely, if there exists an~$\ell \in X^*$ and
a~$\phibif \in X \setminus X_{\sym}$ such that~$(\lambdabif,\ubif,\phibif)$
is a zero of the map~$\cF$, and if the Fr\'echet
derivative~$D\cF(\lambdabif,\ubif,\phibif)$ is invertible, then the
nonlinear operator~$F$ satisfies all the assumptions of
Theorem~\ref{genbif:thm:suffcond}.
\end{itemize}
In other words, the nonlinear problem~$F(\lambda,u) = 0$ has a
symmetry-breaking transcritical or pitchfork bifurcation point
at~$(\lambdabif,\ubif) \in \R \times X_{\sym}$ if and only if the
triple~$(\lambdabif,\ubif,\phibif)$ is a nondegenerate zero of~$\cF$
with $\phibif \not\in X_{\sym}$.
\end{theorem}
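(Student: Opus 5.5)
For part~(a), I would show that $D\cF(\lambdabif,\ubif,\phibif)$ is Fredholm of index zero and injective, and therefore invertible. For the Fredholm property, split the derivative~(\ref{genbif:eqn:extsys2}) into a block-triangular principal part plus a finite-rank perturbation. The principal part is $(\tilde\lambda,\tilde u,\tilde\phi)\mapsto(\tilde\lambda, L|_{X_{\sym}}\tilde u, L\tilde\phi)$, which is Fredholm of index zero from $\R\times X_{\sym}\times X$ to $\R\times Y_{\sym}\times Y$ because both $L|_{X_{\sym}}$ and $L$ are. The remaining off-diagonal pieces are:
\begin{itemize}
\item the terms $\tilde\lambda D_\lambda F$ and $\tilde\lambda D_{\lambda u}F[\phibif]$, which have rank at most one;
\item the functional entry $\ell(\tilde\phi)$, which has rank at most one;
\item the term $D_{uu}F[\phibif,\tilde u]$, which enters only in the lower-triangular position.
\end{itemize}
To handle the last term cleanly, I would write $D\cF$ as (lower-triangular operator with Fredholm diagonal) $+$ (finite rank). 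A block lower-triangular operator whose diagonal blocks are Fredholm is itself Fredholm, with index equal to the sum of the indices. Adding a finite-rank operator preserves this, so the index is zero. The same reasoning applies at any point, using the standing hypothesis of the theorem.

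For injectivity in~(a), suppose $D\cF(\lambdabif,\ubif,\phibif)[\tilde\lambda,\tilde u,\tilde\phi]=0$. The second component gives $L\tilde u=-\tilde\lambda D_\lambda F(\lambdabif,\ubif)$ with $\tilde u\in X_{\sym}$. By uniqueness in Theorem~\ref{genbif:thm:suffcond} ($L|_{X_{\sym}}$ is invertible), this forces $\tilde u=\tilde\lambda\xibif$. Apply $\psibif^*$ to the third component; since $\psibif^*$ kills $R(L)$, this yields
\[
\tilde\lambda\,\psibif^*\bigl(D_{\lambda u}F[\phibif]+D_{uu}F[\phibif,\xibif]\bigr)=0 .
\]
The nondegeneracy condition~(\ref{genbif:thm:suffcond2}) then gives $\tilde\lambda=0$, and hence $\tilde u=0$. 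The third component now reads $L\tilde\phi=0$, so $\tilde\phi=c\phibif$, and the first component gives $c\,\ell(\phibif)=c=0$. Thus $D\cF$ is injective, and being Fredholm of index zero it is invertible.

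For part~(b), I would reverse these steps. Assumption~\ref{genbif:assump:nec} holds except possibly for $\dim N(L)=1$ and the nondegeneracy condition; Fredholmness of $L$ is given. For Assumption~\ref{genbif:assump:sym}, note that $\phibif\notin X_{\sym}$, $\ubif\in X_{\sym}$, and $L|_{X_{\sym}}$ is Fredholm of index zero by hypothesis.
\begin{itemize}
\item To show $\dim N(L)=1$: if $\chi\in N(L)$ satisfies $\ell(\chi)=0$, then $(0,0,\chi)$ lies in the kernel of $D\cF$, so $\chi=0$. Hence $\ell$ is injective on $N(L)$, and since $\phibif\neq0$ lies in $N(L)$, the kernel is $\mathrm{span}[\phibif]$. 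Because $L$ has index zero, $R(L)$ is closed of codimension one, which gives $\psibif^*$ by Hahn--Banach.
\item For the nondegeneracy condition: $\phibif\notin X_{\sym}$ makes $L|_{X_{\sym}}$ injective, hence invertible, so $\xibif$ exists as in Theorem~\ref{genbif:thm:suffcond}. Suppose~(\ref{genbif:thm:suffcond2}) failed. Then $D_{\lambda u}F[\phibif]+D_{uu}F[\phibif,\xibif]=-L\eta$ for some $\eta$. Adjusting $\eta$ by a multiple of $\phibif$ so that $\ell(\eta)=0$, the triple $(1,\xibif,\eta)$ would be a nonzero kernel element of $D\cF$, a contradiction.
\end{itemize}

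The final equivalence statement then follows by combining~(a) and~(b) with Theorem~\ref{genbif:thm:suffcond}. The main obstacle I expect is the Fredholm index computation for the non-diagonal operator $D\cF$; everything else is linear algebra on the kernel. I would settle that step first via the triangular-plus-finite-rank decomposition, checking carefully that the index-zero property of the restriction $L|_{X_{\sym}}$, not merely of $L$, is what makes the total index come out to zero.
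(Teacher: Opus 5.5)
Your proposal is correct, but it reaches invertibility by a different route from the paper's.

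\emph{The paper's route.} It never computes an index for $D\cF(\lambdabif,\ubif,\phibif)$. In~(a) it proves injectivity essentially as you do, and then proves surjectivity by hand:
it solves $L\tilde u = y$ in $X_{\sym}$,
chooses $\tilde\lambda$ as an explicit quotient involving $\psibif^*$ so that the third equation becomes solvable,
and fixes the free multiple of $\phibif$ through $\ell(\phibif)=1$.
In~(b) it uses surjectivity, taking a preimage of $(0,0,z)$ with $z\notin R(L)$ to derive the nondegeneracy condition. For $\dim N(L)=1$ it defers to earlier work.

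\emph{Your route.} You show once that $D\cF$ is Fredholm of index zero, so only injectivity needs checking. In~(b) you likewise use only injectivity, through the explicit kernel elements $(0,0,\chi)$ and $(1,\xibif,\eta)$, which also makes the dimension count self-contained. The triangular fact you invoke is standard. It follows, for instance, from the factorization
\[
\begin{pmatrix} A & 0 \\ C & B \end{pmatrix}
= \begin{pmatrix} A & 0 \\ 0 & I \end{pmatrix}
  \begin{pmatrix} I & 0 \\ C & I \end{pmatrix}
  \begin{pmatrix} I & 0 \\ 0 & B \end{pmatrix},
\]
whose middle factor is invertible. One bookkeeping point: the $(1,1)$ entry of $D\cF$ is $0$, so your finite-rank remainder is really $(\tilde\lambda,\tilde u,\tilde\phi)\mapsto(\ell(\tilde\phi)-\tilde\lambda,\,\tilde\lambda D_\lambda F,\,\tilde\lambda D_{\lambda u}F[\phibif])$. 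This is still of finite rank, so nothing changes.

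\emph{What each buys.} The paper's argument is elementary and constructive: it essentially writes down the inverse and needs no Fredholm theory for operator matrices. Yours is shorter and proves more, namely that $D\cF$ is Fredholm of index zero at every point of $\R\times X_{\sym}\times X$ under the standing hypotheses. That is precisely the property the remark following Theorem~\ref{thm:NewtonKantorovich} relies on, without proof, when it concludes that a successful Newton--Kantorovich validation yields the invertibility required in~(b).
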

\begin{proof}
We first establish the validity of {\em (a)\/}. The assumptions
of Theorem~\ref{genbif:thm:suffcond} immediately imply
that~$(\lambdabif,\ubif,\phibif) \in \R \times  X_{\sym} \times X$
is a solution of~(\ref{genbif:thm:extsys1}), since we assume
$\ell(\phibif) = 1$.

For the verification of the injectivity of the Fr\'echet
derivative~$D\cF(\lambdabif,\ubif,\phibif)$, suppose there exists
a solution $(\tilde{\lambda},\tilde{u},\tilde{\phi}) \in \R
\times X_{\sym} \times X$ of the equation
\begin{equation} \label{genbif:thm:extsys2}
  D\cF(\lambdabif,\ubif,\phibif)[\tilde{\lambda},\tilde{u},\tilde{\phi}]
  = (0,0,0) \; .
\end{equation}
We have to show that this implies $(\tilde{\lambda},\tilde{u},
\tilde{\phi}) = (0,0,0)$. Suppose first that $\tilde{\lambda}
\ne 0$. Then the second component in~(\ref{genbif:thm:extsys2})
can be rewritten as
\begin{displaymath}
    D_\lambda F(\lambdabif,\ubif) +
    L[ \tilde{u} / \tilde{\lambda}] = 0 \; ,
\end{displaymath}
and since~$\xibif \in X_{\sym}$ is the unique solution
of~(\ref{genbif:thm:suffcond1}) one further obtains
$\xibif = \tilde{u} / \tilde{\lambda}$. Now the third
component of~(\ref{genbif:thm:extsys2}) yields
\begin{displaymath}
  D_{\lambda u} F(\lambdabif,\ubif)[\phibif] +
    D_{uu} F (\lambdabif,\ubif)[\phibif,\xibif] =
  -L[\tilde{\phi} / \tilde{\lambda}] \in R(L) \; ,
\end{displaymath}
which contradicts~(\ref{genbif:thm:suffcond2}). Thus, the identity
$\tilde{\lambda} = 0$ has to be satisfied, and the second components
of both~(\ref{genbif:thm:extsys2}) and~(\ref{genbif:eqn:extsys2})
imply $L[\tilde{u}] = 0$, i.e., one has $\tilde{u} \in N(L) \cap
X_{\sym} = \{ 0 \}$. Substituting the identities $\tilde{\lambda} = 0$
and $\tilde{u} = 0$ into the third component immediately yields
$L \tilde{\phi} = 0$, and therefore $\tilde{\phi} = \alpha \phibif$.
Finally, the first component of~(\ref{genbif:thm:extsys2}) implies
$0 = \ell(\alpha \phibif) = \alpha$, which in turn gives the equality
$\tilde{\phi}  = 0$ and completes the verification
that~$D\cF(\lambdabif,\ubif,\phibif)$ is one-to-one.

Next we consider the surjectivity of~$D\cF(\lambdabif,\ubif,\phibif)$.
Let $(\tau,y,z) \in \R \times Y_{\sym} \times Y$ be arbitrary. Note
first that since $N(L) \cap X_{\sym} = \{ 0 \}$, and thus
$y \in Y_{\sym} = L(X_{\sym})$, there exists a unique
$\tilde{u} \in X_{\sym}$ with $L[\tilde{u}] = y$. Due to
Assumption~\ref{genbif:assump:nec} we have $R(L) = N(\psibif^*)$.
Now define
\begin{displaymath}
  \tilde{\lambda} =
  \frac{\psibif^* \left( z - D_{uu}F(\lambdabif,\ubif)[\phibif,\tilde{u}]\right)}
    {\psibif^* \left( D_{\lambda u}F(\lambdabif,\ubif)[ \phibif ] +
     D_{uu}F(\lambdabif,\ubif)[\phibif,\xibif] \right)} \; ,
\end{displaymath}
where the denominator is nonzero in view of~(\ref{genbif:thm:suffcond2}).
This identity can be rewritten in the form
\begin{displaymath}
  \psibif^* \left( \tilde{\lambda} D_{\lambda u}F(\lambdabif,\ubif)[ \phibif ] +
    D_{uu}F(\lambdabif,\ubif)[\phibif,\tilde{u} + \tilde{\lambda} \xibif] -
    z \right) = 0 \; .
\end{displaymath}
The choice of~$\psibif^*$ implies that the argument in this equation
has to be contained in~$R(L)$, and since $L[\beta \phibif] = 0$ for
any~$\beta \in \R$, there exists a $\tilde{\phi} \in X$ such that
for every~$\beta$ the equation
\begin{equation} \label{genbif:thm:extsys3}
  \tilde{\lambda} D_{\lambda u}F(\lambdabif,\ubif)[ \phibif ] +
    D_{uu}F(\lambdabif,\ubif)[\phibif,\tilde{u} + \tilde{\lambda}\xibif] +
    L[\tilde{\phi} + \beta \phibif] = z
\end{equation}
holds. Moreover, the definition of~$\xibif$
in~(\ref{genbif:thm:suffcond1}) furnishes
\begin{equation} \label{genbif:thm:extsys4}
  \tilde{\lambda} D_\lambda F (\lambdabif,\ubif) +
    L[\tilde{u} + \tilde{\lambda}\xibif] =
  \tilde{\lambda} \left( D_\lambda F (\lambdabif,\ubif) +
    L [\xibif] \right) + L[\tilde{u}] =
  L[\tilde{u}] = y \; . 
\end{equation}
Finally, the inclusion $\xibif \in X_{\sym}$ implies $\tilde{u} +
\tilde{\lambda} \xibif \in X_{\sym}$, and so for all $\beta \in \R$
the identities in~(\ref{genbif:thm:extsys3})
and~(\ref{genbif:thm:extsys4}) give the second
and third components of the desired equation
\begin{displaymath}
  D\cF(\lambdabif,\ubif,\phibif)[\tilde{\lambda} , \;
    \tilde{u} + \tilde{\lambda} \xibif, \;
    \tilde{\phi} + \beta \phibif] = (\tau,y,z) \; .
\end{displaymath}
As the last step, one needs to choose~$\beta$ so that the
first component of the equation also holds. Due to
$\ell(\phibif) = 1$, this reduces to $\tau = \ell(\tilde{\phi}
+ \beta \phibif) = \ell(\tilde{\phi}) + \beta$, and thus
$\beta = \tau - \ell(\tilde{\phi})$. This shows that the
Fr\'echet derivative~$D\cF(\lambdabif, \ubif, \phibif)$ is onto,
and completes the proof of~{\em (a)\/}.

We now proceed to the verification of part~{\em (b)\/}, i.e., we
assume that there exists an~$\ell \in X^*$ and a~$\phibif \in
X \setminus X_{\sym}$ such that~$(\lambdabif,\ubif,\phibif)$ is a zero
of~$\cF$ with invertible Fr\'echet derivative~$D\cF(\lambdabif,
\ubif, \phibif) \in \cL(\R \times X_{\sym} \times X, \; \R \times Y_{\sym}
\times Y)$. We have to show that then all assumptions of
Theorem~\ref{genbif:thm:suffcond} are satisfied.

In view of our assumptions for Theorem~\ref{genbif:thm:extsys},
only a few parts of Assumptions~\ref{genbif:assump:nec}
and~\ref{genbif:assump:sym} have to be established. From
$\cF(\lambdabif,\ubif,\phibif) = 0$ one readily obtains
both $F(\lambdabif,\ubif) = 0$ and the identities $L[\phibif] =
D_uF(\lambdabif,\ubif)[\phibif] = 0$. Moreover, the kernel
element~$\phibif$ is nontrivial due to $\ell(\phibif) = 1$, and
this yields $\dim N(L) \ge 1$. That this last inequality is
in fact an equality follows as in the proof
of~\cite[Theorem~2.12]{lessard:sander:wanner:17a} from the
invertibility of~$D\cF(\lambdabif, \ubif, \phibif)$.

We still have to verify the nondegeneracy
condition~(\ref{genbif:thm:suffcond2}). For this, let~$z \in Y
\setminus R(L)$ be arbitrary. Then the invertibility of~$D\cF(\lambdabif,
\ubif, \phibif)$ guarantees a triple~$(\tilde{\lambda}, \tilde{u},
\tilde{\phi}) \in \R \times X_{\sym} \times X$ which satisfies the
identity
\begin{equation} \label{genbif:thm:extsys5}
    D\cF(\lambdabif, \ubif, \phibif)
    [\tilde{\lambda}, \tilde{u}, \tilde{\phi}]
    = (0,0,z) \; .
\end{equation}
Let~$\xibif$ denote the unique solution of~(\ref{genbif:thm:suffcond1}).
If we assume $\tilde{\lambda} = 0$, then the explicit form of the second
component of the above identity yields $0 = \tilde{\lambda} D_\lambda
F(\lambdabif,\ubif) + L \tilde{u} = L \tilde{u}$, which in turn implies
$\tilde{u} \in N(L) \cap X_{\sym} = \{ 0 \}$. Now the last component
of~(\ref{genbif:thm:extsys5}) reads 
\begin{displaymath}
  z = \tilde{\lambda} D_{\lambda u} F(\lambdabif,\ubif)[\phibif] +
    D_{uu} F(\lambdabif,\ubif)[\phibif,\tilde{u}] + L \tilde{\phi} =
  L \tilde{\phi} \; ,
\end{displaymath}
which contradicts $z \notin R(L)$. Thus, we have to have
$\tilde{\lambda} \ne 0$. But then the second component
of~(\ref{genbif:thm:extsys5}) furnishes
$D_\lambda F(\lambdabif,\ubif) + L[ \tilde{u} / \tilde{\lambda}] = 0$,
and this in turn implies $\xibif = \tilde{u} / \tilde{\lambda}$.
Substituting this into the third component
of~(\ref{genbif:thm:extsys5}), one finally obtains
\begin{displaymath}
  D_{\lambda u} F(\lambdabif,\ubif)[\phibif] +
    D_{uu} F(\lambdabif,\ubif)[\phibif,\xibif] \; = \;
    (z - L\tilde{\phi}) / \tilde{\lambda}
  \; \not\in \; R(L) \; ,
\end{displaymath}
since we assumed that $z \not\in R(L)$, yet clearly
$L \tilde{\phi} \in R(L)$. This establishes the nondegeneracy
condition~(\ref{genbif:thm:suffcond2}) and completes the
proof of the theorem.
\end{proof}

\medskip
As in our previous work~\cite{lessard:sander:wanner:17a,
rizzi:sander:wanner:24a}, the above result enables one to
use standard computer-assisted proof techniques to establish
the existence of transcritical or pitchfork bifurcations.
All one has to do is to identify the correct symmetry
space~$X_{\sym}$, and find a verifiable condition to make sure
that the kernel function~$\phibif$ does not lie in this 
symmetry subspace. This is a significant simplification from
our previous approach. However, establishing the actual
type of the bifurcation still relies on knowledge of the second
derivative $D_{uu}F(\lambdabif,\ubif)[\phibif,\phibif]$. For
this, it is extremely useful to have equivariance 
properties available, since such properties often simplify
the task.
\subsection{Verifying the symmetry assumptions}
\label{subsec:genbif:sym}
We close this section with a few comments concerning
Assumption~\ref{genbif:assump:sym}, which is the main symmetry
assumption underlying our above results. In contrast
to~\cite{lessard:sander:wanner:17a, rizzi:sander:wanner:24a},
its formulation is based on the Fredholm property of the linearized
operator on symmetry subspaces, rather than using explicit
equivariance operators. While at first glance this approach 
seems to be unnecessarily indirect, it has inherent advantages.

To demonstrate these advantages, consider a partial differential
equation problem on the one-dimensional base domain $\Omega = (0,1)$,
such as for example the diblock copolymer equation. In all of the cases
studied in~\cite{lessard:sander:wanner:17a, rizzi:sander:wanner:24a},
the function~$\ubif$ has reflection symmetries with respect to several
points in the domain. These imply that there exists an $n \in \N$ with
$n \ge 2$ such that~$\ubif : \Omega \to \R$ can be recovered from its
restriction to the subdomain $\Omega_{\sym} = (0,1/n)$ via successive
even reflections across the boundary. The differential operators in the
diblock copolymer model are based on the Laplacian, so they are
automatically equivariant with respect to these reflections. This
means that the symmetry spaces~$X_{\sym}$ and~$Y_{\sym}$ can be
defined as the closed subspaces of~$X$ and~$Y$ that have these
even reflection symmetries around the reflection points
$x = k/n$ for $k = 1,\ldots,n-1$. 

Note that in view of this framework, the symmetry spaces are
isomorphic in a straightforward way to the original function
spaces for the problem, but this time restricted to the symmetry
fundamental domain~$\Omega_{\sym}$. This is due to the fact that
the original problem required homogeneous Neumann boundary conditions,
and on the restricted domain the function~$\ubif$ satisfies the
same constraints because of the even reflections mentioned above.

These observations lead to the following approach for establishing
the Fredholm property of Assumption~\ref{genbif:assump:sym}. In many
applications, the linearization of the underlying nonlinear operator
is a compact perturbation of a leading order differential operator
which is based on the Laplacian. Thus, the term consisting of the
highest-order derivatives is selfadjoint in the appropriate function
spaces, and one can then use the closed range theorem to establish
that it is Fredholm of index zero. The full linearization is a compact
perturbation, since usually one adds only lower-order derivatives, or
products of the argument functions with coefficient functions in the
symmetry subspace. These additional terms preserve the Fredholm
property and the index. Most importantly, the same argument works
for both the spaces defined over the domain~$\Omega$ and over the
domain~$\Omega_{\sym}$, and therefore the Fredholm assumptions are
immediate consequences both for~$L$ and for the
restriction~$L|_{X_{\sym}}$.

We would like to emphasize that this approach is not restricted 
to one-dimensional base domains, and can equally be applied in the
higher-dimensional setting. For example, consider the square domain
$\Omega = (0,1)^2$ and assume that~$\ubif$ is a function that has the
same even reflection symmetries as $\cos(2\pi x) \cos(3\pi y)$.
Then~$\ubif$ is uniquely determined by its restriction to the
fundamental domain $\Omega_{\sym} = (0,1/2) \times (0,1/3)$, and
the above approach can be applied without change.

Finally, note that establishing the index of the involved
Fredholm operators usually can be achieved by relying on
the closed range theorem. For more details, we refer the
reader to~\cite[Section~III.1]{kielhoefer:12a}. As we will see
later in this paper, the closed range theorem is also important
for the discussion of transcritical bifurcation points, since 
this requires an explicit representation of the range of the
linearization~$L$. For this, one can combine orthogonality
with a kernel function of the adjoint operator.
%
%

\section{Results}
\label{sec:examples}

In this section, we apply our results to two examples. These examples 
serve to illustrate how the general approach presented in Section~\ref{sec:genbif} 
can be used to establish the occurrence of symmetry-breaking bifurcations. 
Some numerically obtained bifurcation diagrams of steady states were already 
shown in Section~\ref{sec:intro}, for the Ohta--Kawasaki equation in two 
dimensions (Figure~\ref{fig:diagramOK2D}) and for the SKT system in one
dimension (Figure~\ref{fig:diagramSKT1D}). We also provide here a numerical
bifurcation diagram for the Ohta--Kawasaki equation in one dimension 
(Figure~\ref{fig:diagramOK1D}), which differs from~\cite{rizzi:sander:wanner:24a} 
because we focus on the non-zero mass case, and for the SKT system in two 
dimensions (Figure~\ref{fig:diagramSKT2D}). We now rigorously establish 
some of the bifurcations suggested by these numerical simulations.

\begin{figure}[tb]
      \includegraphics[width=\textwidth]{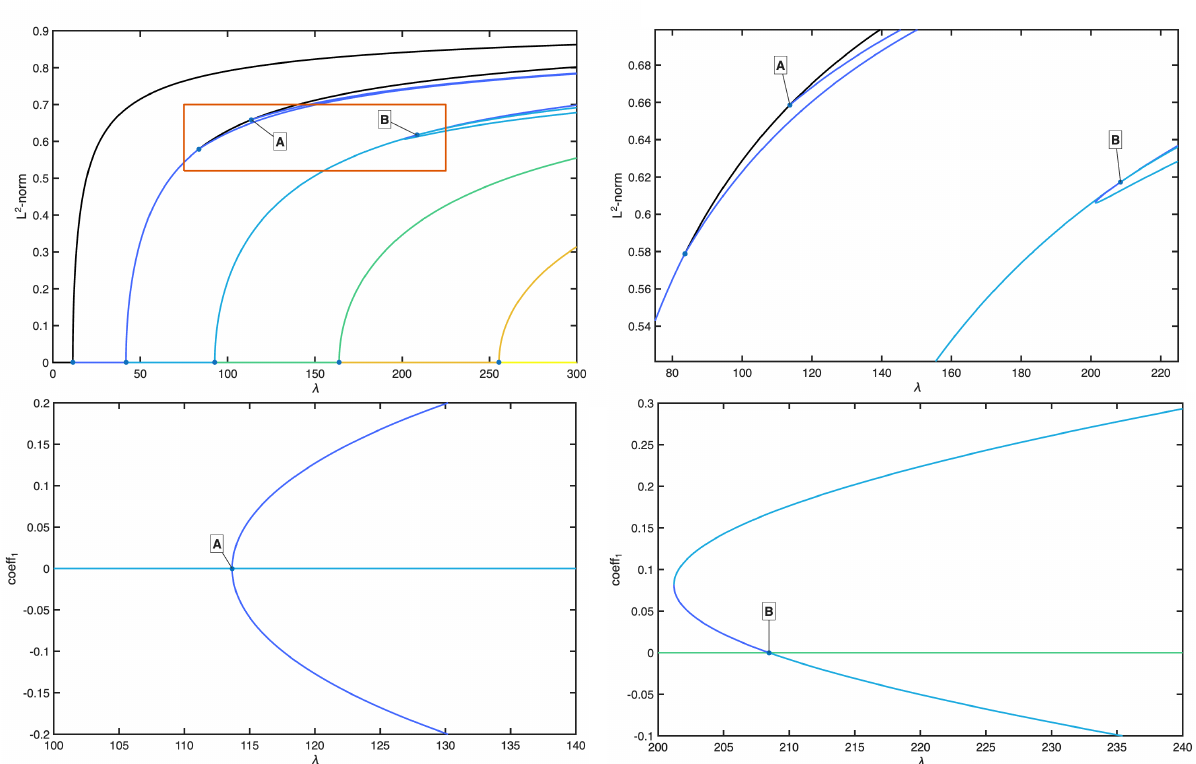}
  \caption{A portion of the bifurcation diagram for the Ohta--Kawasaki equation in one 
  space dimension. The two panels in the top row show a plot of~$\lambda$
  versus the $L^2$-norm of the solution in the large and in close up. 
  The blue dots represent potential pitchfork and transcritical bifurcations which have been detected using the AUTO software package. The ones labeled A and B are validated in 
  Theorem~\ref{thm:resultsOK}.
  The
  bottom row contains closeups of two specific bifurcation points indicated
  in the closeup. In the latter images, the solution measure used for the
  vertical axis is the first cosine Fourier coefficient~$u_1$. Plotted in
  this way, the two bifurcation points have the typical look of a pitchfork
  and transcritical bifurcation, respectively. 
}
  \label{fig:diagramOK1D}
\end{figure}

\begin{figure}[tb]
      \includegraphics[width=\textwidth]{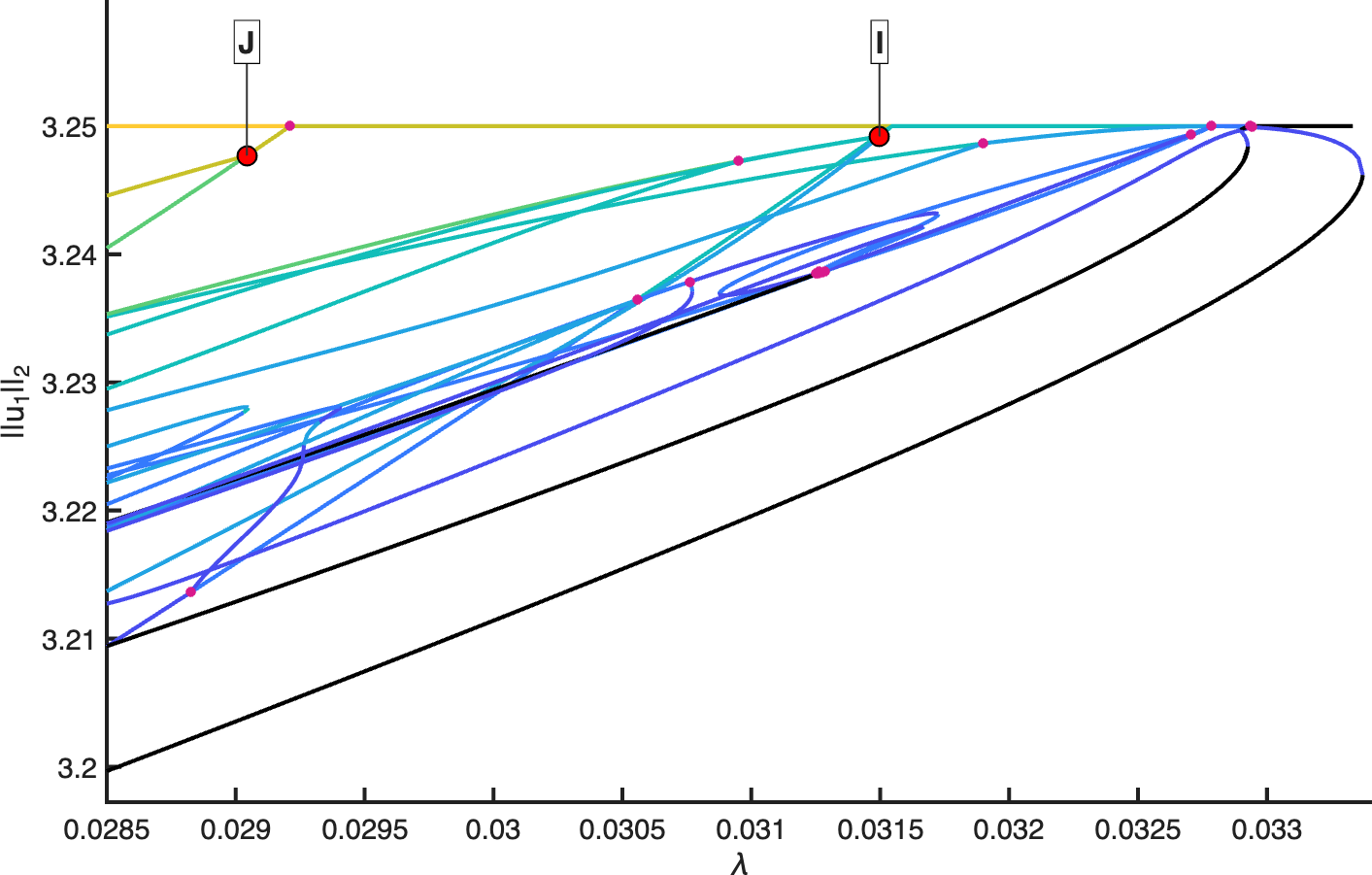}
  \caption{A portion of the bifurcation diagram of steady states for the two-dimensional SKT system~\eqref{parabolicsys} on $\Omega=(0,1)\times (0,4)$, with parameters as in~\eqref{crossdiffparams}. 
  To be consistent with previous work~\cite{KueSor20}, the $L^2$-norm shown is with respect to the first component $u_1$ only.   The dots represent potential pitchfork and transcritical bifurcations which have been detected using the 
  AUTO software package.  The ones labeled I and J are validated in Theorem~\ref{thm:resultsSKT}.
}
  \label{fig:diagramSKT2D}
\end{figure}

\begin{theorem}
\label{thm:resultsOK}
Consider the Ohta--Kawasaki equation~\eqref{dbcp}, with $\mu=-0.1$ 
and $\sigma =1$. Each line of Table~\ref{tab:OK} specifies a 
domain~$\Omega$ and a value~$\blambda$ of the bifurcation parameter~$\lambda$
near which a symmetry-breaking bifurcation of steady-states occurs, of the
type specified in the third column of the table. The corresponding approximate 
steady state~$\bu$ and approximate kernel function~$\bphi$ are represented
in the figure indicated in the table. For each line, the exact 
parameter~$\lambdabif$ at which the bifurcation occurs, and the 
exact steady state~$\ubif$ and kernel function~$\phibif$ satisfy
\begin{align*}
\left\vert \lambdabif-\blambda\right\vert \leq 10^{-8} ,\quad \sup_{x\in\Omega}\left\vert \ubif(x) - \bu(x)\right\vert \leq 10^{-7},\quad \sup_{x\in\Omega}\left\vert \phibif(x) - \bphi(x)\right\vert \leq 10^{-6}.
\end{align*}
\end{theorem}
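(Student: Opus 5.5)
The proof is computer-assisted and follows Theorem~\ref{genbif:thm:extsys}(b): for each line of Table~\ref{tab:OK} we exhibit a nondegenerate zero $(\lambdabif,\ubif,\phibif)$ of the extended system~\eqref{genbif:eqn:extsys1} with $\phibif\notin X_{\sym}$, and then decide between cases (a) and (b) of Theorem~\ref{genbif:thm:suffcond}.

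\textbf{Functional setting.} We write the steady-state problem for~\eqref{dbcp} as $F(\lambda,u)=0$. The Neumann conditions are encoded by cosine Fourier series, $\cos(k\pi x)$ in one dimension and products $\cos(k_1\pi x)\cos(k_2\pi y/0.97)$ in two dimensions. We impose the mass constraint $u_0=\mu$ through the zeroth mode. We take $X,Y$ to be weighted $\ell^1$ spaces of coefficients. The algebraic decay weights give a Banach algebra, and they make the sup-norm bound in the statement follow from $\sup|u-\bu|\le\|u-\bu\|_{\ell^1}$.

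The symmetry spaces $X_{\sym},Y_{\sym}$ are the subspaces supported on the index lattice picked out by the numerically observed reflection symmetry of $\bu$. An example is the set of modes with $k\in n\Z$ in one of the directions, as discussed in Section~\ref{subsec:genbif:sym}. Because the nonlinearity $u^3$ preserves these lattices, $F(\R,X_{\sym})\subset Y_{\sym}$. The Fredholm index-zero property of $L$ and of $L|_{X_{\sym}}$ follows because the leading operator $\Delta^2$ is diagonal and invertible on the nonconstant modes, and everything else is a compact perturbation. For $\ell$ we choose a single Fourier coefficient of $\phi$ that is not in the symmetric lattice.

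\textbf{Validating the zero.} We solve $\cF=0$ numerically to get $\bx=(\blambda,\bu,\bphi)$. We then apply a Newton–Kantorovich / radii polynomial theorem to $T(x)=x-A\cF(x)$. Here $A$ is an approximate inverse of $D\cF(\bx)$: a numerical inverse on the first $N$ modes and the exact diagonal inverse of the $\Delta^2$ part on the tail. This needs four bounds:
\begin{itemize}
\item $Y_0\ge\|A\cF(\bx)\|$, a finite computation because $\bu,\bphi$ are trigonometric polynomials;
\item $Z_0\ge\|I-AA^\dagger\|$;
\item $Z_1\ge\|A(D\cF(\bx)-A^\dagger)\|$, handled on the tail with the decay of $\lambda(1-3\bu^2)$-type multiplication against $|k|^{-4}$;
\item $Z_2(r)$, the Lipschitz bound on $D\cF$ over a ball, using the Banach algebra estimates.
\end{itemize}
All of these are evaluated in interval arithmetic. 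A radius $r$ with $Y_0+(Z_0+Z_1)r+Z_2r^2<r$ gives a unique zero in the ball and invertibility of $D\cF$ there, with $r$ yielding the stated error bounds. Since the error bound on $\phibif$'s off-lattice coefficient is smaller than that coefficient, and $\ell(\phibif)=1$ forces that coefficient to be nonzero anyway, we get $\phibif\notin X_{\sym}$ automatically. Theorem~\ref{genbif:thm:extsys}(b) then yields a symmetry-breaking bifurcation.

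\textbf{Determining the type.} For a pitchfork, we check $D_{uu}F[\phibif,\phibif]=-6\lambdabif\Delta(\ubif\phibif^2)$ (up to sign conventions) in $R(L)$. This holds by equivariance when $\phibif$ is odd under a reflection that fixes $\ubif$. In that case $\phibif^2$ is symmetric, so the quantity lies in $Y_{\sym}=L(X_{\sym})\subset R(L)$.

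For a transcritical bifurcation, we rigorously enclose a kernel element $\psi$ of the adjoint; $L$ is essentially selfadjoint up to the $\Delta$ factor, so $\psi$ is obtained from $\phibif$. We then show $\langle\psi,\ubif\phibif^2\rangle\neq0$ using the interval enclosures.

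\textbf{Expected main obstacle.} The hardest part is getting the $Z_1$ tail estimate tight enough in two dimensions with nonzero mass. The large $\lambda\approx120$ amplifies the nonlinear terms, and I expect this to force a large truncation $N$. My first attempt would be to exploit the symmetry lattice to reduce the dimension, so that only $\phi$ needs the full lattice.
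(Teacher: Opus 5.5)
Your existence argument follows the paper's: a Newton--Kantorovich proof for $\cF$ on $\R\times X_{\sym}\times X$, then Theorem~\ref{genbif:thm:extsys}(b). Normalizing with a single off-lattice coefficient and using a single-ball radii polynomial are harmless variations. To obtain the three different tolerances you need a weighted product norm, which is what the componentwise box $\Box(\bx,r)$ of Theorem~\ref{thm:NewtonKantorovich} provides.

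\textbf{The gap is in determining the type for the pitchfork on line~D.} You claim $D_{uu}F(\lambdabif,\ubif)[\phibif,\phibif]\in R(L)$ ``when $\phibif$ is odd under a reflection that fixes $\ubif$''. But a $10^{-6}$ enclosure cannot prove exact oddness. Moreover, for $\nsym=(2,1)$ on $(0,1)\times(0,0.97)$ no symmetry of the rectangle does the job. The only one fixing $X_{\sym}$ is $x\mapsto 1-x$, and the actual kernel function (modes with $k_1$ even) is even under it. The involution under which $\phibif$ is odd is $(x,y)\mapsto(\tfrac12-x,\,0.97-y)$. It acts only on functions even about $x=\tfrac12$, not on the cosine space $X$, so ``equivariance plus $\dim N(L)=1$'' gives nothing directly.

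The missing argument is the paper's. Write $X=X_{\sym}\oplus X_{\asym_1}\oplus X_{\asym_2}$ with all three summands $L$-invariant, where $X_{\asym_2}$ consists of the modes with $k_1$ odd. The one-dimensional kernel must then lie in a single summand. The extended-system proof excludes $X_{\sym}$, but you must also verify rigorously that $\phibif\notin X_{\asym_2}$. This is not automatic: a kernel function odd about $x=\tfrac12$ would in general give $\ubif\phibif^2\notin Y_{\sym}$. Only after this check do you get $\phibif\in X_{\asym_1}$, hence $\ubif\phibif^2\in Y_{\sym}$, and the pitchfork via Lemma~\ref{lem:psi_*}. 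With your choice of $\ell$, placing the normalized coefficient in the $X_{\asym_1}$ lattice would make the check automatic. For line~A the plain argument with $x\mapsto 1-x$ suffices.

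\textbf{Your transcritical step is a different route from the paper's, and it works here, but state the pairing correctly.} Write $L=-\Delta\bigl(\Delta+\lambdabif(1-3\ubif^2)\bigr)-\lambdabif\sigma$, with $L^*$ the adjoint for the pairing $\psi^*(u)=\sum_n\psi_nu_n$. Then $\Delta L^*=L\Delta$, so $N(L^*)$ is spanned by $\psibif=\Delta^{-1}\phibif+c$. Here $\phibif$ has zero mean because $\lambdabif\sigma\neq0$, and the constant $c$ is forced by the $-\lambda\sigma$ term. Since $D_{uu}F(\lambdabif,\ubif)[\phibif,\phibif]=6\lambdabif\Delta(\ubif\phibif^2)$ has zero mean, $c$ drops out and
\[
\psibif^*\bigl(D_{uu}F(\lambdabif,\ubif)[\phibif,\phibif]\bigr)=6\lambdabif\,\langle\phibif,\ubif\phibif^2\rangle_{L^2(\Omega)} .
\]
So the test is $\int_\Omega\ubif\phibif^3\,dx\neq0$, not $\langle\psi,\ubif\phibif^2\rangle$ with $\psi$ the adjoint kernel element (unless your $\psi$ is $\phibif$ itself). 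This quantity can be enclosed directly from the bounds on $\ubif$ and $\phibif$, so it replaces the paper's second computer-assisted proof for the zero of $\cG^*$. The paper's approach is heavier, but it does not rely on this $H^{-1}$-gradient structure and carries over unchanged to the SKT system.
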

The approximate steady states~$\bu$ and approximate kernel functions~$\bphi$ 
are given by trigonometric polynomials, whose coefficients are available 
at~\cite{BreSanWan26code}. 

\begin{theorem}
\label{thm:resultsSKT}
Consider the SKT system~\eqref{parabolicsys}, with $d_1=d_2=\lambda$ and 
all other parameters as in~\eqref{crossdiffparams}. Each line of
Table~\ref{tab:SKT} specifies a domain~$\Omega$ and a value~$\blambda$
of the bifurcation parameter~$\lambda$ near which a symmetry-breaking 
bifurcation of steady-states occurs, of the type specified in the third
column of the table. The corresponding approximate steady state~$\bu$ 
and approximate kernel function~$\bphi$ are represented in the figure
indicated in the table. For each line, the exact parameter~$\lambdabif$ 
at which the bifurcation occurs, and the exact steady state~$\ubif$ and
kernel function~$\phibif$ satisfy
\begin{align*}
\left\vert \lambdabif-\blambda\right\vert \leq 10^{-14} ,\quad \max_{i=1,2}\sup_{x\in\Omega}\left\vert (\ubif)_i(x) - \bu_i(x)\right\vert \leq 10^{-11},\quad \max_{i=1,2}\sup_{x\in\Omega}\left\vert (\phibif)_i(x) - \bphi_i(x)\right\vert \leq 10^{-9}.
\end{align*}
\end{theorem}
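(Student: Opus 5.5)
The plan is to reduce each line of Table~\ref{tab:SKT} to a rigorous zero-finding problem for the extended system~\eqref{genbif:eqn:extsys1} and then invoke Theorem~\ref{genbif:thm:extsys}(b) together with Theorem~\ref{genbif:thm:suffcond}.

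\textbf{Step 1: setting up $F$.} For the stationary SKT problem, with $d_1=d_2=\lambda$ and the parameters~\eqref{crossdiffparams}, the natural form is
\begin{displaymath}
F(\lambda,u) = \bigl(\Delta(\lambda u_1 + 3u_1u_2) + 5u_1-3u_1^2-u_1u_2,\; \lambda\Delta u_2 + 2u_2-3u_2^2-u_1u_2\bigr).
\end{displaymath}
I will pose this on cosine Fourier spaces adapted to the Neumann conditions, so that $X$ and $Y$ are weighted $\ell^1$ sequence spaces, with $X$ carrying two extra derivatives. Because the equation is quasilinear, I plan to precondition by the inverse of $\Delta$ plus a constant, or to change variables to $w_1=\lambda u_1+3u_1u_2$. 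Either way the linearization should become identity plus compact, so that Fredholm index zero holds on both $X$ and $X_{\sym}$ by the argument of Section~\ref{subsec:genbif:sym}.

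The symmetry spaces $X_{\sym}$ and $Y_{\sym}$ are the spaces of coefficient sequences supported on the modes compatible with the even reflections of $\bu$. For example, on $(0,1)$ these are the modes $\cos(nk\pi x)$, or their products in two dimensions. The inclusion $F(\R,X_{\sym})\subset Y_{\sym}$ then follows because products of such cosines stay in the same span.

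\textbf{Step 2: Newton--Kantorovich validation.} I will compute $\blambda$, $\bu\in X_{\sym}$ and $\bphi$ numerically, normalized by $\ell(\phi)=$ a fixed Fourier coefficient of $\bphi$ not in the symmetric mode set. I then apply a radii polynomial argument to the fixed-point map $T(x)=x-A\cF(x)$, where $A$ is an approximate inverse of $D\cF(\bar x)$. This requires bounds $Y_0$, $Z_0$, $Z_1$ and $Z_2(r)$ with
\begin{displaymath}
Y_0+(Z_0+Z_1+Z_2(r)r)\,r<r .
\end{displaymath}
Such an $r$ yields a unique zero $(\lambdabif,\ubif,\phibif)$ within distance $r$ and an invertible $D\cF$ there. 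The sup-norm estimates in the theorem follow because the $\ell^1$ norm dominates the sup norm.

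\textbf{Step 3: $\phibif\notin X_{\sym}$ and classification.} Membership $\phibif\notin X_{\sym}$ holds because $\ell(\phibif)=1$ while $\ell$ vanishes on $X_{\sym}$. Theorem~\ref{genbif:thm:extsys}(b) then gives the assumptions of Theorem~\ref{genbif:thm:suffcond}, so a symmetry-breaking bifurcation occurs.

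To determine its type I need $\psibif^*$. I will validate a kernel element of the adjoint operator by a second zero-finding problem, or exploit an equivariance that places $D_{uu}F[\phibif,\phibif]$ in a subspace known to lie in $R(L)$, which gives the pitchfork case. For the transcritical case I will show $|\psibif^*D_{uu}F[\phibif,\phibif]|>0$ using rigorous enclosures.

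\textbf{Main obstacle.} I expect the hardest step to be the $Z_1$ tail bound for the quasilinear cross-diffusion term. Here the small $\lambda\approx 10^{-3}$ makes the diffusion weak relative to $3\Delta(u_1u_2)$, and the required truncation dimension is large. My first attempt would be the variable change that puts the highest-order part in the form $\Delta\circ M(u)$, and then inverting $M(\bu)$ pointwise on the tail.
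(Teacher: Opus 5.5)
Your architecture matches the paper's. You use cosine-based weighted $\ell^1$ spaces and a Newton--Kantorovich validation of the extended system $\cF$. You obtain $\phibif\notin X_\sym$ from $\ell(\phibif)=1$ with $\ell|_{X_\sym}=0$, and then apply Theorems~\ref{genbif:thm:extsys}(b) and~\ref{genbif:thm:suffcond}.

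\textbf{The two-dimensional symmetry spaces are wrong.} You describe $X_\sym$ as the modes compatible with the even reflections of $\bu$, i.e.\ products of one-dimensional spaces. This fails on line J.
\begin{itemize}
\item There $\bu$ is supported on $(2\N\times 8\N)\cup((2\N+1)\times(8\N+4))$. This encodes invariance under the point reflection $(x,y)\mapsto(1-x,1-y)$, not a product of reflections.
\item The kernel function lies in $((2\N+1)\times 8\N)\cup(2\N\times(8\N+4))$. This is exactly the membership the paper's pitchfork argument for J establishes (under a checked assumption).
\item Both index sets lie in $\N\times 4\N$. So $\phibif$ shares every even-reflection symmetry of $\bu$ and breaks only the point symmetry.
\item Because $\bu$ uses both parities in $n_1$, your space is $\N\times 4\N$, which contains $\phibif$.
\end{itemize}
Then $L|_{X_\sym}$ has a kernel, and no $\ell$ vanishing on $X_\sym$ can satisfy $\ell(\phibif)=1$. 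Hence there is no nondegenerate zero of $\cF$ to validate. You need an $F$-invariant space containing $\bu$ but not $\phibif$. Here that is the checkerboard space $(\nsym_1 2\N\times\nsym_2 2\N)\cup(\nsym_1(2\N+1)\times\nsym_2(2\N+1))$.

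\textbf{The Fredholm justification is incorrect.} Preconditioning by $(\Delta-c)^{-1}$ does not give identity plus compact. Since $D_uF(\lambda,u)v=\Delta(\alpha v)+\text{lower order terms}$ with $\alpha=\begin{pmatrix}\lambda+3u_2 & 3u_1\\ 0 & \lambda\end{pmatrix}$, you get multiplication by $\alpha$ plus compact. The self-adjoint argument of Section~\ref{subsec:genbif:sym} therefore does not apply.
\begin{itemize}
\item Index-zero Fredholmness of $L$ and $L|_{X_\sym}$, a hypothesis of Theorem~\ref{genbif:thm:extsys}, hinges on invertibility of $\alpha$, i.e.\ $\lambda+3u_2\neq 0$ on $\Omega$.
\item Your change of variables needs the same condition, since its Jacobian is $\alpha$.
\item The paper obtains this from the verified bound $\|I-\bw\alpha\|<1$. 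Its tail inverse is $\begin{pmatrix}\bw & 0\\ \bg & \bw\end{pmatrix}\Delta^{-1}$ with $\bw\approx\alpha^{-1}$ and $\bg\approx-\bw\beta\bw$. The block $\bg$ handles the coupling $\Delta(\beta\,\cdot)$ of the $\phi$-equation to $u$, which pointwise inversion of $\alpha$ alone leaves untreated.
\end{itemize}

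\textbf{The classification step misses its key mechanisms.}
\begin{itemize}
\item For G ($\nsym=5$), $D_{uu}F[\phibif,\phibif]$ need not lie in $Y_\sym$, so Lemma~\ref{lem:psi_*} is not enough. One splits $X_\asym=X_{\asym_1}\oplus X_{\asym_2}$ into $L$-invariant pieces and checks $\phibif\notin X_{\asym_2}$. This forces $\phibif\in X_{\asym_1}$, since $N(L)$ is one-dimensional. Index-zero Fredholmness of $L|_{X_{\asym_2}}$ then gives $Y_\sym\oplus Y_{\asym_2}\subset R(L)$.
\item For J, an analogous splitting is needed just to pin down $\phibif\in X_{\asym_1}$.
\item For the transcritical lines, the normalized problem $\psi\mapsto(\psi(\bphi)-1,L^*\psi)$ has a derivative with codimension-one range, so it has no nondegenerate zero. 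Moreover, $L$ is only known through enclosures of $(\lambdabif,\ubif)$. The paper adds the eigenvalue $\mu$ as an unknown, solving $L^*\psi=\mu\psi$. It treats $D_uF(\lambdabif,\ubif)-D_uF(\blambda,\bu)$ as a perturbation and proves $\mu_0=0$ a posteriori from $\psibif^*(\phibif)\neq 0$.
\end{itemize}
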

Here as well, the approximate steady states~$\bu$ and approximate kernel
functions~$\bphi$ are given by trigonometric polynomials, whose 
coefficients are available at~\cite{BreSanWan26code}. 

\begin{table}[!h]
\centering
\begin{tabular}{cccccc}
$\Omega$ & $\blambda$ & Bifurcation & Solution & Location & $\nsym$ \\
\midrule[2pt]
$(0,1)$ & $113.670521515$ & pitchfork & Fig.~\ref{fig:OK1D_1} &
Fig.~\ref{fig:diagramOK1D}, A & $2$\\
\midrule
$(0,1)$ & $208.461521833$ & transcritical & Fig.~\ref{fig:OK1D_2} & Fig.~\ref{fig:diagramOK1D}, B & $3$\\
\midrule
$(0,1)\times (0,0.97)$ & $119.663923396$ & transcritical & Fig.~\ref{fig:OK2D_1} &
Fig.~\ref{fig:diagramOK2D}, C & $(1,3)$\\
\midrule
$(0,1)\times (0,0.97)$ & $106.018283308$ & pitchfork & Fig.~\ref{fig:OK2D_2} &
Fig.~\ref{fig:diagramOK2D}, D & $(2,1)$ \\
\bottomrule[2pt]
\end{tabular}
\caption{Data for some symmetry-breaking bifurcations of steady states in the
Ohta--Kawasaki equation~\eqref{dbcp}, validated in Theorem~\ref{thm:resultsOK}.
In the table, $\Omega$ is the spatial domain on which the equation is considered,
and~$\blambda$ the approximate value at which the bifurcation occurs. The corresponding
approximate steady state and symmetry-breaking eigenfunction are depicted in the figures
indicated in the solution column in the table, and the bifurcation location is
specified in the following column. Finally, $\nsym$ specifies the symmetry 
space~$X_\sym$ used for the proof, as defined in Section~\ref{sec:bif}.}
\label{tab:OK}
\end{table}

\begin{table}[!h]
\centering
\begin{tabular}{cccccc}
$\Omega$ & $\blambda$ & Bifurcation & Solution & Location & $\nsym$ \\
\midrule[2pt]
$(0,1)$ & $0.008966808767687$ & transcritical & Fig.~\ref{fig:SKT1D_1} &
Fig.~\ref{fig:diagramSKT1D}, E & $3$ \\
\midrule
$(0,1)$ & $0.013993670967280$ & pitchfork & Fig.~\ref{fig:SKT1D_2} &
Fig.~\ref{fig:diagramSKT1D}, F & $2$\\
\midrule
$(0,1)$ & $0.003226739574507$ & pitchfork & Fig.~\ref{fig:SKT1D_3} &
Fig.~\ref{fig:diagramSKT1D}, G & $5$\\
\midrule
$(0,1)$ & $0.004452465315501$ & transcritical & Fig.~\ref{fig:SKT1D_4} &
Fig.~\ref{fig:diagramSKT1D}, H & $3$\\
\midrule
$(0,1)\times (0,4)$ & $0.031497237998270$ & transcritical & Fig.~\ref{fig:SKT2D_1} &
Fig.~\ref{fig:diagramSKT2D}, I & $(1,3)$\\
\midrule
$(0,1)\times (0,4)$ & $0.029043593993081$ & pitchfork & Fig.~\ref{fig:SKT2D_2} &
Fig.~\ref{fig:diagramSKT2D}, J & $(1,4)$\\
\bottomrule[2pt]
\end{tabular}
\caption{Data for some symmetry-breaking bifurcations of steady states in the
SKT system~\eqref{parabolicsys}, validated in Theorem~\ref{thm:resultsSKT}.
The spatial domain underlying the equation is denoted by~$\Omega$, 
and~$\blambda$ is the approximate value at which the bifurcation occurs. 
The corresponding approximate steady state and symmetry-breaking 
eigenfunction are depicted in the figures indicated in the solution column
in the table, and the bifurcation location is specified in the following column.
Finally, $\nsym$ specifies the symmetry space~$X_\sym$ used for the proof, as
defined in Section~\ref{sec:bif}.}
\label{tab:SKT}
\end{table}

The proofs of Theorem~\ref{thm:resultsOK} and Theorem~\ref{thm:resultsSKT} consist 
in showing that the extended system~$\cF$ introduced in Section~\ref{sec:genbif} 
does have a non-degenerate zero~$(\lambdabif,\ubif,\phibif)$ near each of the approximate 
zeros~($\blambda,\bu,\bphi)$ described in Tables~\ref{tab:OK} and~\ref{tab:SKT} 
and in the corresponding figures. Theorem~\ref{genbif:thm:extsys}(b) and 
Theorem~\ref{genbif:thm:suffcond} then show that a symmetry-breaking bifurcation
does occur at~$(\lambdabif,\ubif,\phibif)$. The existence of these zeros is obtained using
computer-assisted proofs based on a Newton-Kantorovich argument, which are
described in Section~\ref{sec:CAPs}, and which automatically yield 
that~$D\cF(\lambdabif,\ubif,\phibif)$ is indeed invertible. The type of bifurcation
is then determined by checking whether assumption~(a) or~(b) of 
Theorem~\ref{genbif:thm:suffcond} holds. In the pitchfork case, this
is obtained by studying more carefully the symmetries involved 
(which we note was not required to prove that a symmetry-breaking 
bifurcation occurs), as explained in Section~\ref{sec:pitchfork}. 
In the transcritical case, we conduct a second computer-assisted proof, 
this time to rigorously enclose the left eigenfunction~$\psibif^*$ which 
characterizes the range of~$L$ as in~\eqref{genbif:assump:nec2}, which
then enables us to check that we are in case~(a). This step is 
described in Section~\ref{sec:transcritical}. All the computer-assisted 
parts of the proofs can be reproduced using the Matlab code available
at~\cite{BreSanWan26code}, together with Intlab~\cite{Intlab} for 
interval arithmetic calculations. Each proof takes between~1 and~20 
seconds for the one-dimensional examples, and between~1 and~2 minutes
for the two-dimensional examples, on a laptop with an Intel Core 
Ultra 5 335 processor and 32GB of RAM, but we note that the code 
could be heavily optimized, especially in the two-dimensional case, 
for instance by taking advantage of the symmetries as 
in~\cite{vandenberg:williams:21a} or~\cite{RPjl}. We also emphasize
that, for most examples, the obtained error bounds are actually 
smaller than those reported in Theorem~\ref{thm:resultsOK} and 
Theorem~\ref{thm:resultsSKT}, where we only gave the worst case. 
\begin{figure}[!h]
    \centering
    \begin{subfigure}[b]{0.45\textwidth}
        \centering
        \includegraphics[width=\textwidth]{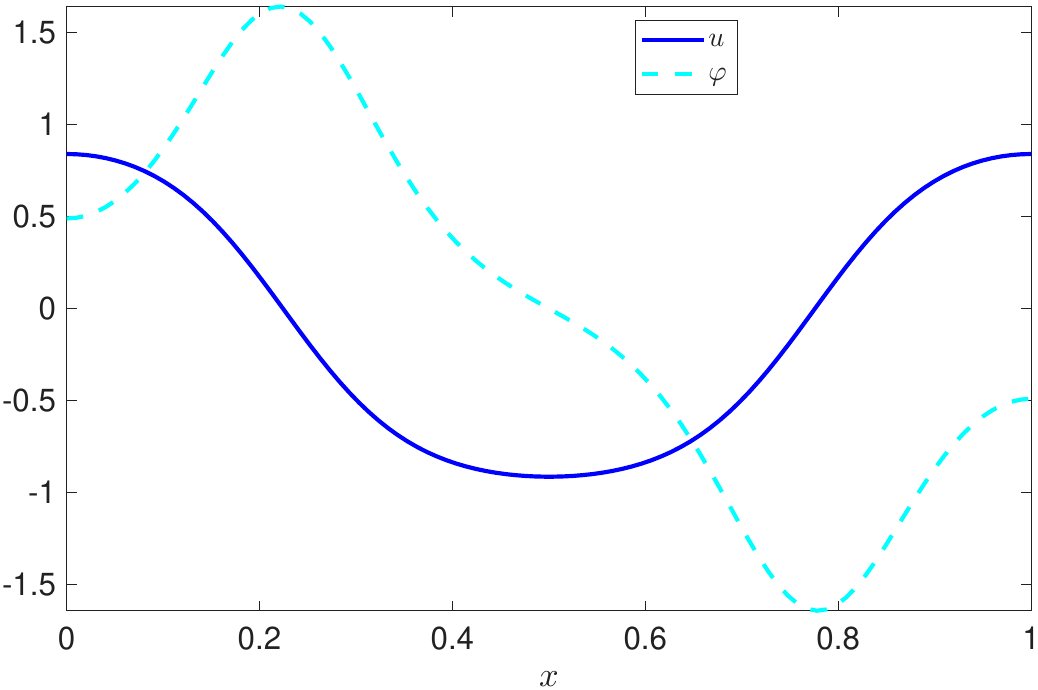}
        \caption{Label $A$ on Line 1 of Table~\ref{tab:OK}.}
        \label{fig:OK1D_1}
    \end{subfigure}
    \hfill
    \begin{subfigure}[b]{0.45\textwidth}
        \centering
        \includegraphics[width=\textwidth]{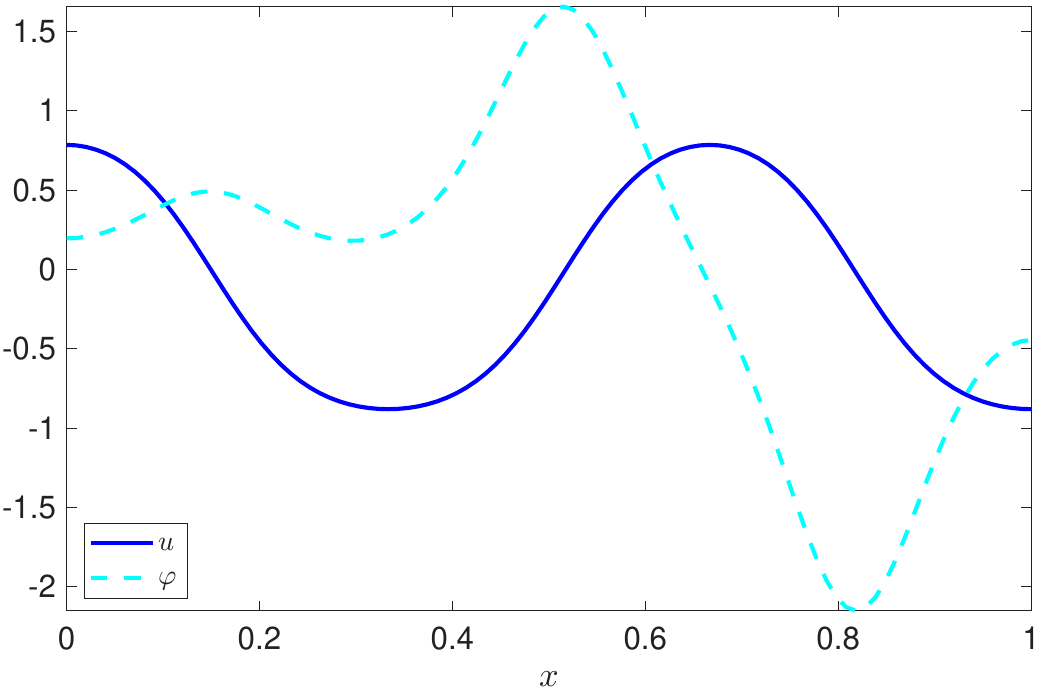}
        \caption{Label $B$ on Line 2 of Table~\ref{tab:OK}.}
        \label{fig:OK1D_2}
    \end{subfigure}
    \caption{Two different approximate steady states~$\bu$ and kernel 
    functions~$\bphi$ of the Ohta--Kawasaki equation~\eqref{dbcp} near bifurcation points.}
    \label{fig:OK1D}
\end{figure}

\begin{figure}[!h]
\centering
      \includegraphics[width=\textwidth]{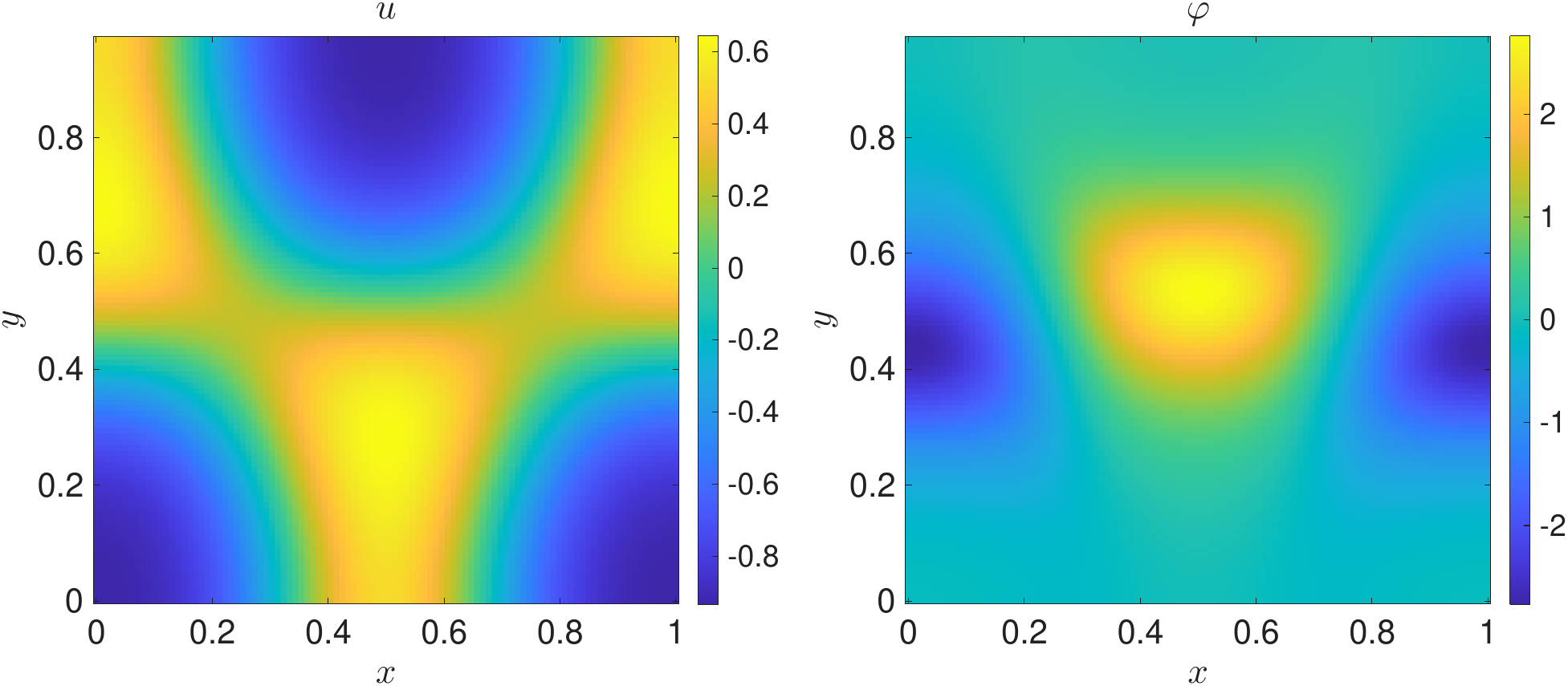}
  \caption{The approximate steady state~$\bu$ and kernel function~$\bphi$ of the
  Ohta--Kawasaki equation~\eqref{dbcp} corresponding to label~$D$ on Line~4 of Table~\ref{tab:OK}.}
  \label{fig:OK2D_2}
\end{figure}

\begin{figure}[!h]
    \centering
    \begin{subfigure}[b]{0.45\textwidth}
        \centering
        \includegraphics[width=\textwidth]{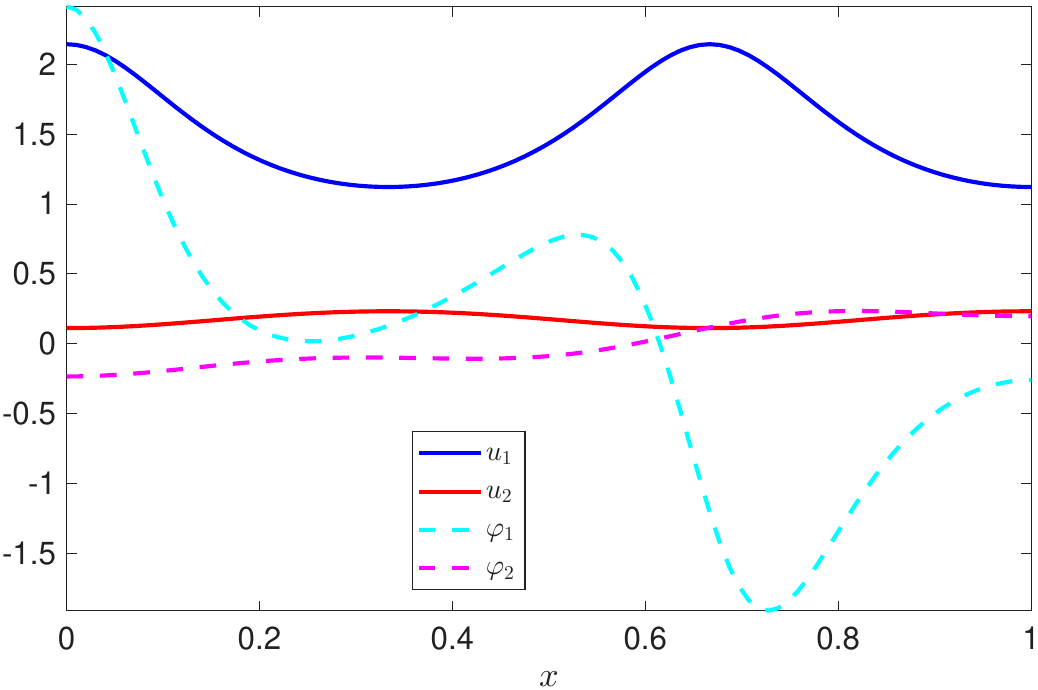}
        \caption{Label $E$ on Line 1 of Table~\ref{tab:SKT}.}
        \label{fig:SKT1D_1}
    \end{subfigure}
    \hfill
    \begin{subfigure}[b]{0.45\textwidth}
        \centering
        \includegraphics[width=\textwidth]{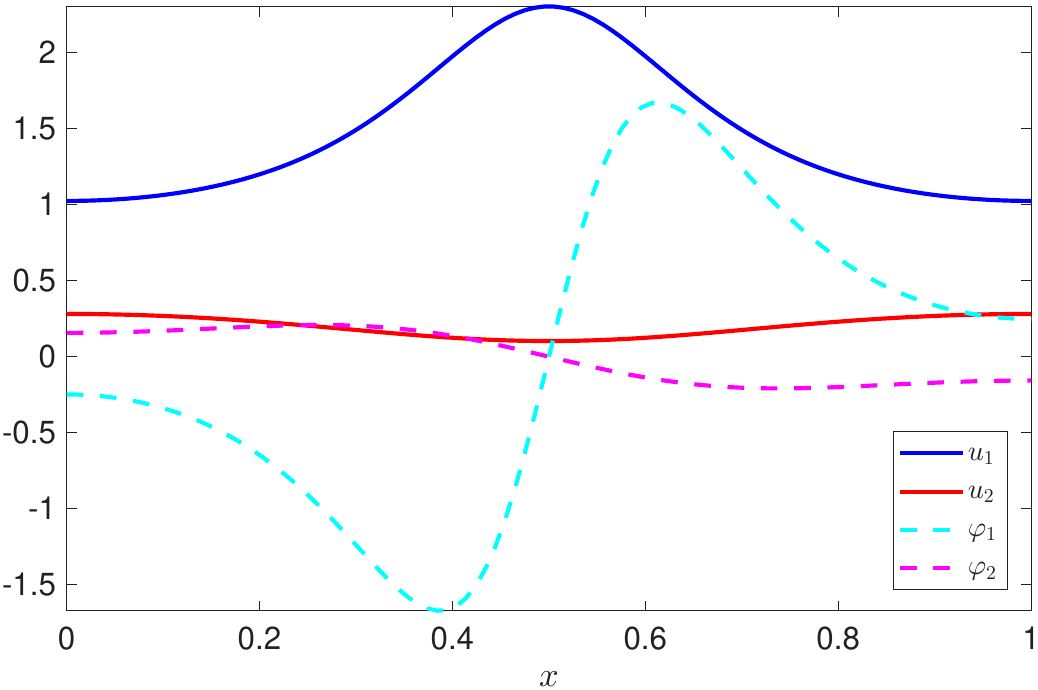}
        \caption{Label $F$ on Line 2 of Table~\ref{tab:SKT}.}
        \label{fig:SKT1D_2}
    \end{subfigure} \\[3ex]
    \begin{subfigure}[b]{0.45\textwidth}
        \centering
        \includegraphics[width=\textwidth]{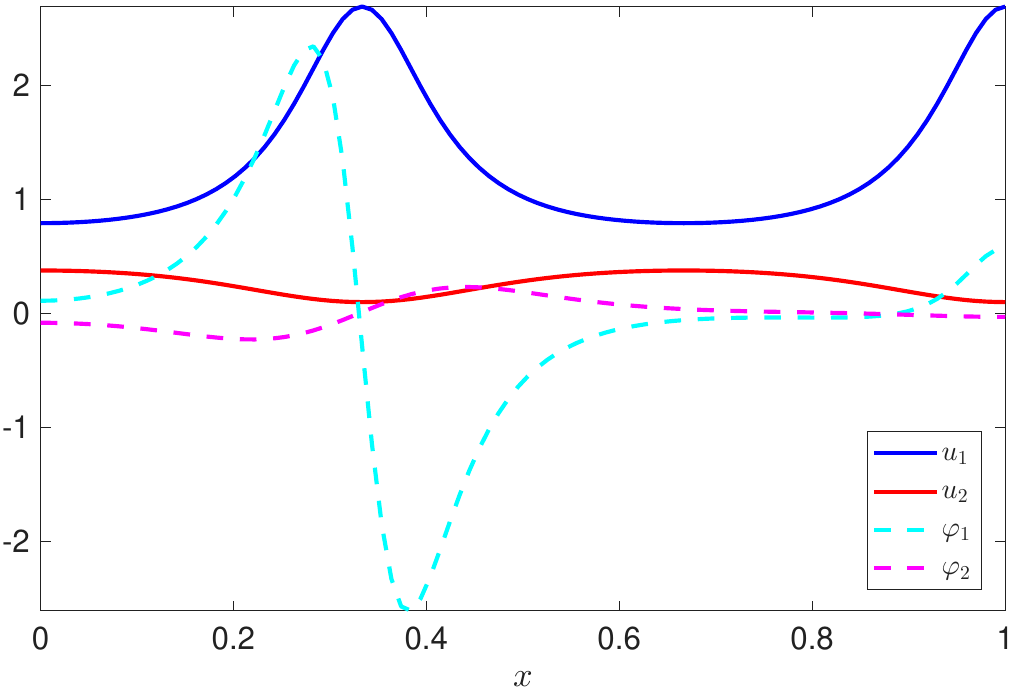}
        \caption{Label $H$ on Line 4 of Table~\ref{tab:SKT}.}
        \label{fig:SKT1D_4}
    \end{subfigure}
    \caption{Three different approximate steady states~$\bu$ and 
    kernel functions~$\bphi$ of the SKT system~\eqref{parabolicsys} near bifurcation points.}
    \label{fig:SKT1D}
\end{figure}

\begin{figure}[!h]
\centering
      \includegraphics[width=\textwidth]{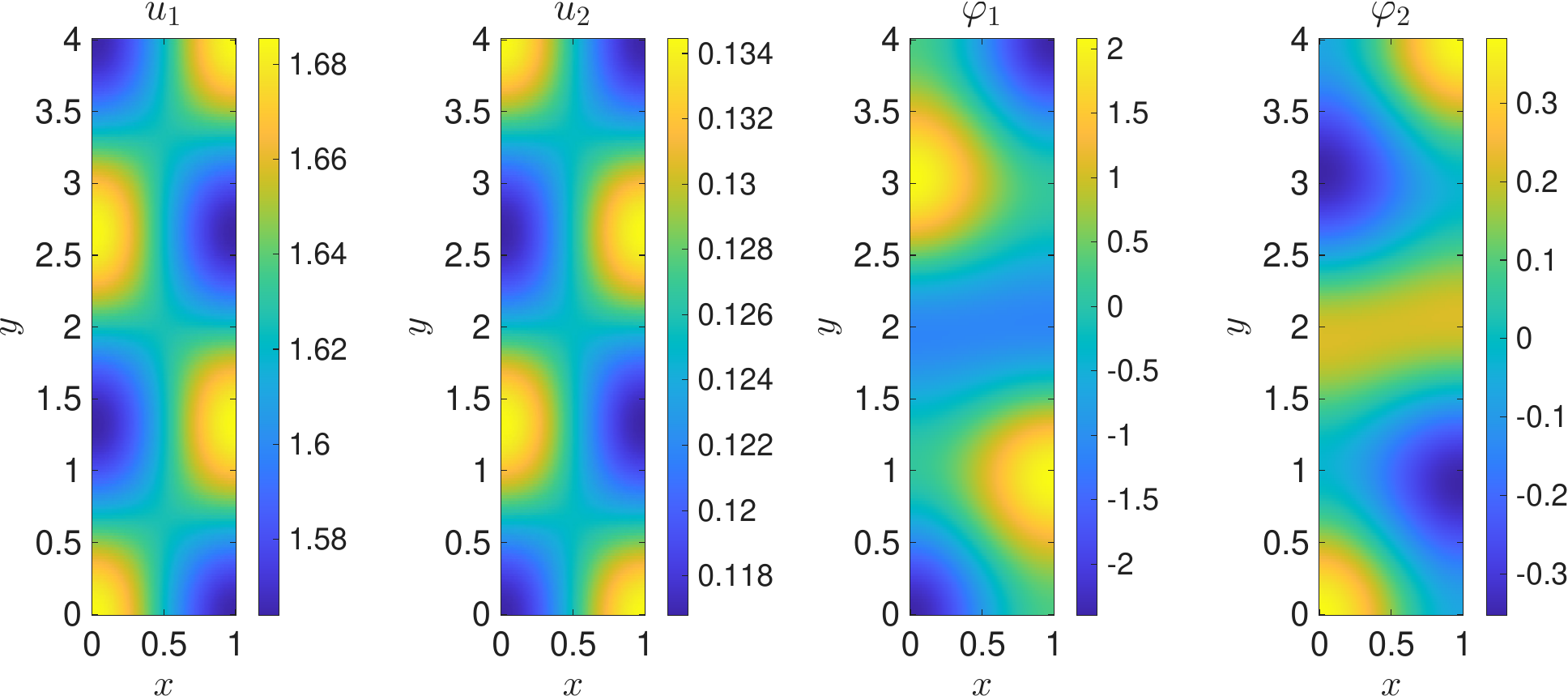}
  \caption{The approximate steady state~$\bu$ and kernel function~$\bphi$ of the
  SKT system~\eqref{parabolicsys} corresponding to label~$I$ on Line~5 of Table~\ref{tab:SKT}.}
  \label{fig:SKT2D_1}
\end{figure}

\begin{figure}[!h]
\centering
      \includegraphics[width=\textwidth]{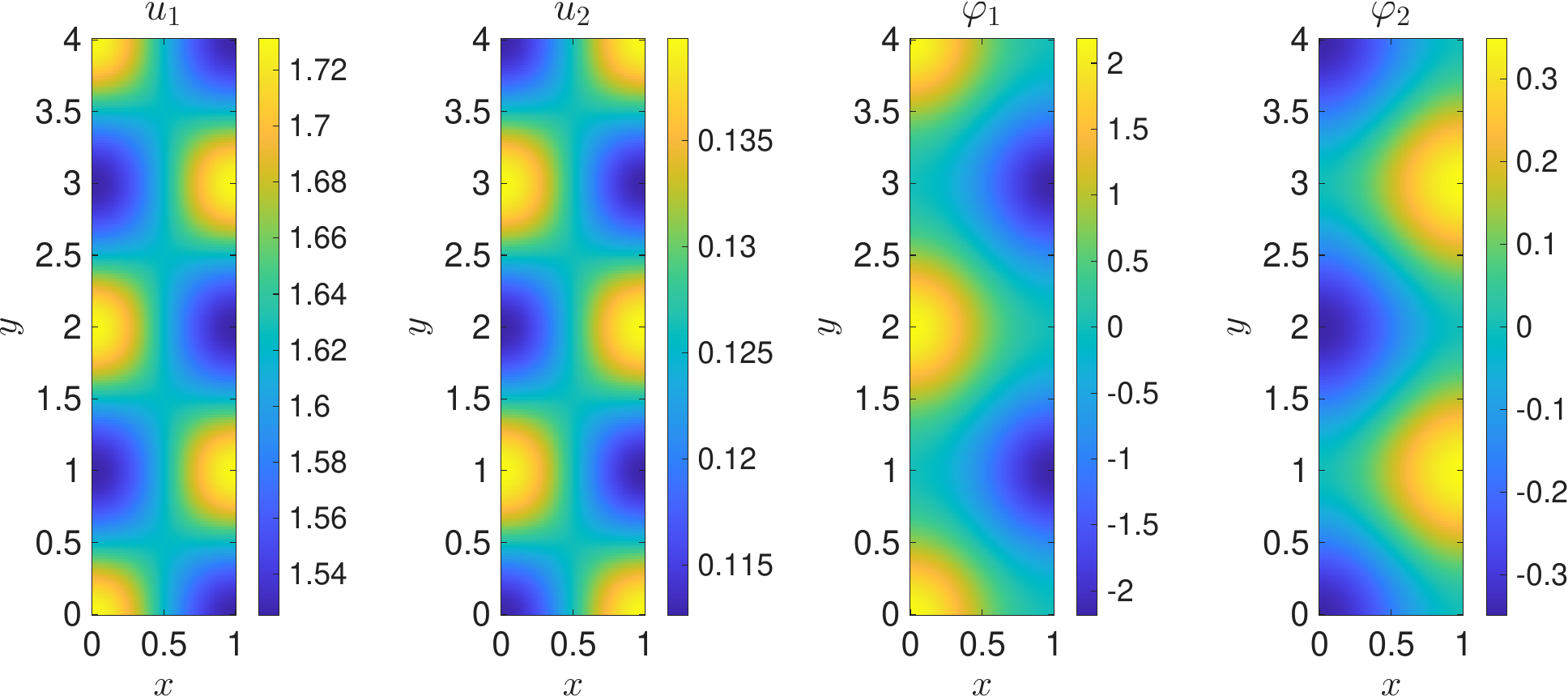}
  \caption{The approximate steady state~$\bu$ and kernel function~$\bphi$ of the
  SKT system~\eqref{parabolicsys} corresponding to label~$J$ on Line~6 of Table~\ref{tab:SKT}.}
  \label{fig:SKT2D_2}
\end{figure}

\section{Computer-assisted proof techniques}
\label{sec:CAPs}

One of the main features of Theorem~\ref{genbif:thm:extsys}(b) is that it
reformulates the study of symmetry-breaking bifurcations into a problem which
can be settled using computer-assisted proofs. In this section, we showcase how
to apply the result in concrete examples. Our procedure is as follows. We first compute numerically what we hope to be an accurate approximation of a zero of the extended system $\cF$ introduced in~\eqref{genbif:eqn:extsys1}, and then prove a posteriori the existence of a nearby zero of $\cF$, using a fixed point argument which also provides us with guaranteed error bounds between the approximate zero and the exact one. The invertibility of the Fréchet derivative required in Theorem~\ref{genbif:thm:extsys}(b) is obtained automatically with our approach.
This type of strategy goes back at least to the proof of the Feigenbaum conjecture~\cite{Lan82}, and is by now common in computer-assisted proofs for differential equations and other dynamical systems~\cite{Gom19,NakPluWat19,plum:95a,plum:96a,BerLes15}. 

We recall some of the basic mathematical tools underlying such computer-assisted
proofs of zeros of systems in Section~\ref{sec:NewtonKantorovich}. We then
consider two specific examples, the  Ohta--Kawasaki model \eqref{dbcp} in
Section~\ref{sec:proofOK} and the SKT system \eqref{parabolicsys} in
Section~\ref{sec:proofSKT}. A strength of our approach is that it does not
require very precise information on the involved underlying symmetries. However,
one byproduct of this is that Theorem~\ref{genbif:thm:extsys}(b) by itself does
not allow us to determine whether the established bifurcation is of transcritical
or pitchfork type. This information can often be recovered after the bifurcation 
has been validated, by checking whether condition~(a) or~(b) of 
Theorem~\ref{genbif:thm:suffcond} holds. If the bifurcation is a pitchfork, 
this requires a deeper analysis of the symmetries (to be done by hand), presented 
in Section~\ref{sec:pitchfork}. On the other hand, if the bifurcation is transcritical, a 
subsequent computer-assisted proof can be conducted to check condition~(a)
of Theorem~\ref{genbif:thm:suffcond}, and this is presented in Section~\ref{sec:transcritical}.

\subsection{The Newton-Kantorovich approach}
\label{sec:NewtonKantorovich}

In this section, we provide sufficient conditions, that can be checked in practice,
and which ensure that a zero of $\cF$ exists near an element $\bx\in\cX$. A common strategy to accomplish this is to consider a Newton-like fixed point operator 
\begin{equation}
\label{eq:T}
T:x\mapsto x - A\cF(x), 
\end{equation} 
where $A:\cY\to\cX$ is an injective linear map approximating $D\cF(\bx)^{-1}$. 

The following statement, based on the Banach fixed point theorem, provides explicit conditions under which $T$ has a unique fixed point near $\bx$. Many similar versions of this theorem have been used in the last decades for computer-assisted proofs. This specific instance is the one introduced in~\cite[Theorem 2.12]{BerBreShe24}, and is well adapted to situations where the space $\cX$ has several components, as is the case when we deal with the extended system~\eqref{genbif:eqn:extsys1}. Its main feature is that the norm on the product space $\cX$ is not chosen a priori, and that we do not try to choose the neighborhood of $\bx$ on which we apply Banach fixed point theorem as a ball for this norm. This provides some extra flexibility when trying to prove that $T$ is a contraction, see~\cite{BerBreShe24} for a more detailed discussion.

\begin{theorem}
\label{thm:NewtonKantorovich}
Let $M\in\N_{\geq 1}$, $(\cX^m,\|\cdot\|_{\cX^m})_{m=1}^M$ be Banach spaces,
$\cX=\Pi_{m=1}^M \cX^m$ the product space, and $\pi^m:\cX\to \cX^m$ the
projections onto the components. Let $\rstar = (\rstar_m)_{m=1}^M \in \R_{>0}^M$
and~$\bar{x} \in \cX$. For any $r\in \R_{>0}^M$, we define 
\begin{equation*}
\Box(\bar{x},r) = \{ x \in \cX :  \left\Vert\pi^{m}(x-\bar{x}) \right\Vert_{\cX^m} \leq r_m \text{ for } 1\leq m\leq M\}.
\end{equation*} 
Assume that $T \in C^1(\Box(\bar{x},r),\cX)$. For $r,\rstar \in \R_{>0}^M$ we
say that $r \leq \rstar$ if we have $r_m \leq \rstar_m$ for all $1\leq m\leq M$. Finally, we denote partial Fr\'echet derivatives by $D_i$.
 
Finally, suppose that $Y_m \geq 0$, $Z^1_{m,i} \geq 0$, and $Z^2_{m,i,j} \geq 0$ for $1 \leq  i,j,m \leq M$ satisfy
\begin{alignat}{2}
	\left\Vert\pi_m (T(\bar{x})-\bar{x}) \right\Vert_{\cX^m} &\leq Y_m, \label{eq:def_Y}\\
	\left\Vert\pi_m D_i T(\bar{x}) \right\Vert_{B(\cX^i,\cX^m)} &\leq Z^1_{m,i}, \label{eq:def_Z}\\
	\left\Vert\pi_m (D_i T(x)- D_i T(\bar{x})) \right\Vert_{B(\cX^i,\cX^m)} &\leq \sum_{j=1}^M Z^2_{m,i,j} \left\Vert \pi_j(x-\bar{x}) \right\Vert_{\cX^j}
	&\quad&\text{for all } x\in \Box(\bar{x},\rstar). \label{eq:def_W}
\end{alignat}
If $r,\eta \in \R_{>0}^M$ with $r \leq \rstar$ satisfy
\begin{alignat}{1}
	Y_m + \sum_{i=1}^M Z^1_{m,i}r_i  
	+ \frac{1}{2}\sum_{i,j=1}^M Z^2_{m,i,j}r_i r_j  & \leq r_m 
	\label{eq:inequalities1} \\
	 \sum_{i=1}^M Z^1_{m,i}\eta_i
	 + \sum_{i,j=1}^M Z^2_{m,i,j}\eta_i r_j  & < \eta_m
	 \label{eq:inequalities2} 
\end{alignat}
for $1\leq m\leq M$,
then $T$ has a unique fixed point in $\Box(\bar{x},r)$.
\end{theorem}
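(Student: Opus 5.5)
We will prove this with the Banach fixed point theorem applied to $T$ on the set $\Box(\bx,r)$, which is closed and, as a product of closed balls, convex. The norm on $\cX$ does not matter much here: any product norm (say the maximum of the component norms) makes $\cX$ complete. All estimates will be written componentwise and then combined through a weighted norm adapted to $\eta$.

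\textbf{Step 1: $T$ maps $\Box(\bx,r)$ into itself.} Take $x\in\Box(\bx,r)$ and put $h=x-\bx$. We write
\begin{displaymath}
T(x)-\bx = \bigl(T(\bx)-\bx\bigr) + \sum_{i} D_iT(\bx)\pi_i h + \int_0^1 \sum_i \bigl(D_iT(\bx+th)-D_iT(\bx)\bigr)\pi_i h\,\d t .
\end{displaymath}
Now project onto component $m$ and apply \eqref{eq:def_Y}, \eqref{eq:def_Z} and \eqref{eq:def_W}; the last is valid because $\bx+th\in\Box(\bx,r)\subset\Box(\bx,\rstar)$. This gives
\begin{displaymath}
\|\pi_m(T(x)-\bx)\| \le Y_m+\sum_i Z^1_{m,i}r_i+\sum_{i,j}Z^2_{m,i,j}r_ir_j\int_0^1 t\,\d t .
\end{displaymath}
By \eqref{eq:inequalities1} the right-hand side is at most $r_m$.

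\textbf{Step 2: contraction.} For $x,y\in\Box(\bx,r)$ we use the mean value inequality along the segment joining them, which stays in the box by convexity. For every point $z$ of the box, \eqref{eq:def_Z} and \eqref{eq:def_W} give
\begin{displaymath}
\|\pi_m D_iT(z)\|\le Z^1_{m,i}+\sum_j Z^2_{m,i,j}r_j =: B_{m,i}.
\end{displaymath}
Hence $\|\pi_m(T(x)-T(y))\|\le \sum_i B_{m,i}\|\pi_i(x-y)\|$. Condition \eqref{eq:inequalities2} says exactly that $\sum_i B_{m,i}\eta_i<\eta_m$ for every $m$. Since $M$ is finite, there is a $\kappa<1$ with $\sum_i B_{m,i}\eta_i\le\kappa\eta_m$ for all $m$.

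Next we introduce the weighted norm $\|x\|_\eta=\max_m \|\pi_m x\|/\eta_m$, which is equivalent to the product norm. With it,
\begin{displaymath}
\frac{\|\pi_m(T(x)-T(y))\|}{\eta_m}\le \frac{1}{\eta_m}\sum_i B_{m,i}\eta_i\,\|x-y\|_\eta\le\kappa\|x-y\|_\eta .
\end{displaymath}
So $T$ is a $\kappa$-contraction on the complete set $\Box(\bx,r)$, and Banach's theorem gives existence and uniqueness of a fixed point there.

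The main obstacle is the bookkeeping in Step 2. The key point is choosing the $\eta$-weighted max norm, which converts the strict componentwise inequality into a genuine contraction constant. A smaller technical point is the $C^1$ and mean-value argument in Banach spaces, applied component by component. Here we will use the integral form of the remainder for $C^1$ maps, or equivalently the Hahn--Banach version of the mean value inequality.
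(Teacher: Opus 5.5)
Your proof is correct and matches the argument behind this result: the paper does not prove it but cites \cite{BerBreShe24}, and the remark following the theorem uses exactly your weighted norm $\max_m \eta_m^{-1}\|\pi_m x\|_{\cX^m}$ to obtain $\|DT(x)\|<1$ on the box. Your self-mapping step uses the integral Taylor remainder, which yields the factor $\frac{1}{2}$ in \eqref{eq:inequalities1}, and your uniform componentwise bound $\sum_i B_{m,i}\eta_i<\eta_m$ is exactly \eqref{eq:inequalities2}; the Banach fixed point theorem on the closed convex set $\Box(\bx,r)$ then finishes the proof.
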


\begin{remark}
If assumption~\eqref{eq:inequalities2} is satisfied, we can 
consider on the product space~$\cX$ the norm given by $\Vert x \Vert_\cX = \max_{1\leq m\leq M} \frac{1}{\eta_m}\Vert \pi_m x\Vert_{\cX^m}$, for which we get, for all $x$ in $\Box(\bar{x},r)$,
\begin{align*}
\Vert DT(x) \Vert_{\cX} \leq \max_{1\leq m\leq M} \frac{1}{\eta_m} \left(\sum_{i=1}^M Z^1_{m,i}\eta_i
	 + \sum_{i,j=1}^M Z^2_{m,i,j}\eta_i r_j \right) < 1.
\end{align*}
Therefore, recalling that $DT(x) = I - AD\cF(x)$, $AD\cF(x)$ is invertible. If $D\cF(x)$ is a Fredholm operator of index $0$, then $D\cF(x)$ itself is also invertible. 

What we have just shown is that, if the existence of a zero $x$ of $\cF$ is obtained via Theorem~\ref{thm:NewtonKantorovich}, then the invertibility of $D\cF(x)$ required in Theorem~\ref{genbif:thm:extsys}(b) automatically holds.
\end{remark}

We now consider two examples. For each of them, we first introduce suitable
Banach spaces $\cX$ and $\cY$ in order to study the $\cF=0$ problem given by the
extended system~\eqref{genbif:eqn:extsys1}, and construct a well chosen
approximate inverse $A$ to be used in~\eqref{eq:T}. After that, we derive computable estimates $Y_m$, $Z^1_{m,i}$ and $Z^2_{m,i,j}$ satisfying assumptions~\eqref{eq:def_Y}-\eqref{eq:def_W} of Theorem~\ref{thm:NewtonKantorovich}. Finally, for each of the approximate bifurcation points obtained numerically and described in Table~\ref{tab:OK} and Table~\ref{tab:SKT}, the code available at~\cite{BreSanWan26code} can be used to evaluate these estimates and to check that conditions~\eqref{eq:inequalities1}-\eqref{eq:inequalities2} hold, which proves that a symmetry-breaking bifurcation does occur nearby, according to Theorem~\ref{thm:NewtonKantorovich} and Theorem~\ref{genbif:thm:extsys}.

We emphasize that, although we focus on two specific examples below, the entire
procedure is rather general, and well understood for a wide class of elliptic
problems. The fact that the extended system has several components makes
everything more cumbersome to write down, but all the ideas are the same as the
ones used for a computer-assisted proof of a regular equilibrium. For this reason, we merely provide the central estimates in the main body of the text, and defer most of the technical details to the Appendix. We also refer readers who are unfamiliar with such computer-assisted proofs to~\cite[Section 1.3.1]{Bre25} for a basic example, where the same ideas are presented in a simpler context.

\subsection{Application to the Ohta--Kawasaki model}
\label{sec:proofOK}

In this section, we focus on the Ohta--Kawasaki model~\eqref{dbcp}, which is used to model the evolution of diblock copolymer microstructures~\cite{OhtKaw86}.  The equilibria of this equation are solutions of the elliptic system given by  
\begin{align}
\label{eq:OK}
 \begin{cases}
			F(\lambda,u)= 
			-\Delta \left( \Delta u + \lambda \left( u - u^3 \right) \right) -
  \lambda \sigma (u - \mu) = 0  & \text{ in }\Omega, \\
			\quad\;\;\partial_n u = \partial_n \Delta u = 0 & \text{ on }\partial\Omega,
	\end{cases}
\end{align}
where $\partial_n$ denotes the normal derivative, and we assume that~$\mu$
and~$\sigma$ are fixed constants. Equilibrium solutions and continuation of
solutions with respect to parameters have been studied for this and closely
related systems using computer-assisted proofs~\cite{cai:watanabe:19a, 
rizzi:etal:22a,sander:wanner:21a, vandenberg:williams:17a, vandenberg:williams:19a,
vandenberg:williams:21a, wanner:16a, wanner:17a}, and in
particular~\cite{lessard:sander:wanner:17a,rizzi:sander:wanner:24a} with
respect to pitchfork bifurcations. However, the methods of the last two papers
were restricted in the first case to $\Z_2$-symmetries, and in the second one
to $\Z_n$-symmetries. In each case, detailed knowledge of the symmetry group~$X_s$
was required. Our new theorem expands the cases that are possible to treat, and
the current paper is the first one to present transcritical bifurcation points,
to consider higher-dimensional domains, and to consider the case of non-zero mass
parameter $\mu$. 

\subsubsection{Sequence space preliminaries}
\label{sec:sequence_space}

In this work, we consider rectangular domains $\Omega \subset \R^d$, given by $\Omega = \prod_{i=1}^d (0,L_i)$, and study symmetry-breaking bifurcations of steady states of~\eqref{eq:OK}. While we only provide examples in dimensions one and two, the theory applies for general $d$.
The shape of the domain together with the Neumann boundary conditions make it natural to look for stationary solutions of~\eqref{eq:OK} as cosine series. 
For any $s\in\R$, we thus consider the following weighted $\ell^1$ spaces:
\begin{equation*}
\ell^1_{s} := \left\{ u=(u_n)_{n\in\N^d}, \ \left\Vert u\right\Vert_{\ell^1_s} := \sum_{n\in\N^d} \vert u_n\vert \xi^{(s)}_n <\infty \right\},
\end{equation*}
where
\begin{equation}
\xi^{(0)}_n := \sqrt{(2-\delta_{n_1,0}) \ldots (2-\delta_{n_d,0})}, \qquad n\in\N^d, 
\end{equation}
and
\begin{equation}
\xi^{(s)}_n := \xi^{(0)}_n (1+ \vert n\vert_2)^s, \qquad n\in\N^d,
\end{equation}
and with $\vert n \vert_2 = \sqrt{n_1^2 + \dots + n_d^2}$. 
To any sequence $u\in\ell^1_s$, $s\geq 0$, we associate the continuous function
\begin{align}
\label{eq:u_function}
u: x\mapsto \sum_{n\in\N^d} u_n \xi^{(0)}_n \prod_{i=1}^d \cos\left(\frac{n_i x_i \pi} {L_i}\right).
\end{align}
\begin{remark}
In order to not overburden the notation, we use the same symbol to denote a
sequence $u$ in $\ell^1_{s}$ and the function $u$ given
by~\eqref{eq:u_function}, but it should be clear from context which object we
are considering in any given instance.
\end{remark}
The weights $\xi^{(0)}_n$ are incorporated into~\eqref{eq:u_function} in order
to obtain a weight-free formula for the scalar product
\begin{align}
\label{eq:scalar_product}
\langle u,v \rangle_{L^2(\Omega)} := \frac{1}{\vert\Omega\vert} \int_\Omega u(x)v(x)\d x = \sum_{n\in\N^d} u_n v_n.
\end{align}
They are also included in the $\ell^1_s$ norms so that, for all $s\geq 0$, one
has the estimate
\begin{align*}
\sup_{x\in\Omega} \vert u(x)\vert \leq \Vert u\Vert_{\ell^1_s}.
\end{align*}
Another important property of the space $\ell^1_s$ is that it is a Banach algebra for all $s\geq 0$, for the discrete convolution product corresponding to pointwise products of functions. That is, for $s\geq 0$ and $u,v\in\ell^1_s$, we recall that the product $uv\in\ell^1_s$, where
\begin{align}
\label{eq:prod_on_ell1}
(uv)_n = \xi^{(0)}_n \sum_{k\in\Z^d} \dfrac{u_{\vert k\vert} v_{\vert n-k\vert} }{\xi^{(0)}_{\vert k\vert} \xi^{(0)}_{\vert n-k\vert}},\quad n\in\N^d,
\end{align}
with $\vert k\vert = (\vert k_1\vert, \ldots, \vert k_d\vert)$.
\begin{remark}
The usage of weighted~$\ell^1$ spaces is common in many other works, and in particular in computer-assisted proofs using cosine series. Within this literature, it is more usual to consider
\begin{align*}
u: x\mapsto \sum_{n\in\N^d} u_n \left(\xi^{(0)}_n\right)^2 \prod_{i=1}^d \cos\left(\frac{n_i x_i \pi}{L_i}\right)
\end{align*}
instead of~\eqref{eq:u_function}, which has the advantage of yielding a formula
without weights for the corresponding discrete convolution product. However, in
this work we also make use of the Euclidean structure of~$L^2$, and prioritized having a clean formula without weights for the scalar product~\eqref{eq:scalar_product} rather than for the discrete convolution, which is why we selected the convention~\eqref{eq:u_function}.
\end{remark}

As a final preliminary step, for any $N\in\N^d$ we introduce the projection operators $\Pi^{\leq N}$ and $\Pi^{>N}$ on $\ell^1_s$, given by
\begin{align*}
\left(\Pi^{\leq N} u \right)_n := 
\begin{cases}
u_n \quad &\text{if } n\leq N,\\
0 \quad &\text{otherwise,}
\end{cases}
\qquad \text{and} \qquad 
\left(\Pi^{> N} u \right)_n := 
\begin{cases}
0 \quad &\text{if } n\leq N,\\
u_n \quad &\text{otherwise.}
\end{cases}
\end{align*}
Here, $n\leq N$ means that $n_j\leq N_j$ for all $j\in\{1,\ldots,d\}$.

\subsubsection{Zero finding problem and approximate inverse}
\label{sec:F_A_OK}

In order to study steady states of~\eqref{eq:OK}, and symmetry-breaking bifurcations with respect to the parameter $\lambda$, we consider $X = \ell^1_{s}$ for some $s\geq 0$, $Y = \ell^1_{s-4}$, and $F:\R\times X\to Y$ given by~\eqref{eq:OK}
where the product on $\ell^1_s$ is given by~\eqref{eq:prod_on_ell1}, and for any $u$ in $\ell^1_s$, 
\begin{align*}
\left(\Delta u\right)_n = -\sum_{i=1}^d \left(\frac{n_i \pi}{L_i}\right)^2 u_n,\quad n\in\N^d.
\end{align*}
Following Section~\ref{subsec:genbif:ext}, we consider $\cX = \R\times X_{\sym}\times X$, $\cY = \R\times Y_{\sym}\times Y$, and $\cF:\cX\to\cY$ the extended system given by~\eqref{genbif:eqn:extsys1}. Regarding the linear form $l\in X^*$ involved in $\cF$, we pick
\begin{align*}
l: \phi \mapsto \sum_{n\in \N^d} \bphi_n \phi_n,
\end{align*}
where~$\bphi$ is some fixed element in~$X$, typically an approximation of the bifurcating eigenfunction. In particular, it will
be convenient to assume that we have $\bphi\in\Pi^{\leq N} X$ for some $N\in\N^d$. 
The precise definition of the symmetry subspaces~$X_{\sym}$ and~$Y_{\sym}$ in~$\cX$ 
and~$\cY$ is not important for the moment, but will be given in 
Section~\ref{sec:pitchfork}. At the moment we only assume that we indeed have 
the inclusion $F(\R,X_\sym)\subset Y_\sym$, and that the restriction
$D_u F(\lambda,u)|_{X_{\sym}} \in \cL(X_{\sym},Y_{\sym})$ is a Fredholm operator
of index zero. Notice that the full operator $D_u F(\lambda,u) \in \cL(X,Y)$ is a 
Fredholm operator of index zero, since it is a relatively compact perturbation 
of~$-\Delta^2$, and in each of our examples the same argument readily applies 
to the restriction~$D_u F(\lambda,u)|_{X_{\sym}}$.
 
With the notations of Theorem~\ref{thm:NewtonKantorovich}, the spaces are 
$\cX^{1} = \R$, $\cX^2 = X_{\sym}$ and $\cX^3 = X$. Moreover, denoting 
$x=(\lambda,u,\phi)$ a generic element of $\cX$, the partial derivatives 
correspond to $D_1 = D_\lambda$, $D_2= D_u$ and $D_3 = D_\phi$. 
Finally, the projections defined on the~$\ell^1_s$ spaces extend in a natural
way to the product spaces~$\cX$ or~$\cY$. For this, given a triple 
$(\lambda,u,\phi)\in\cX$, one defines both $\Pi^{\leq N} (\lambda,u,\phi) :=
(\lambda,\Pi^{\leq N}u,\Pi^{\leq N}\phi)$ and $\Pi^{> N} (\lambda,u,\phi) := 
(0,\Pi^{> N}u,\Pi^{> N}\phi)$. In particular, we take the convention that 
$\Pi^{\leq N}\R := \R$ and $\Pi^{> N}\R := \{0\}$.

Suppose now that we have an approximate zero $\bx = (\blambda,\bu,\bphi)\in \cX$
of~$\cF$. 
Our goal is to use Theorem~\ref{thm:NewtonKantorovich} with the quasi-Newton
operator~$T$ defined in~\eqref{eq:T}, and therefore we first need to construct a 
suitable approximate inverse~$A$ of~$D\cF(\bx)$. This construction is standard 
for such semilinear problems in the computer-assisted proof literature, and is 
a straightforward generalization of the one that could have been used to study 
a regular steady state of~\eqref{eq:OK}.
 
In order to understand the rationale behind the construction of~$A$, it can be 
helpful to look at a block-matrix representation of~$D\cF(\bx)$:

\begin{equation}
\label{eq:DFblocks}
D\cF(\bx) = 
\left(
\begin{array}{ccc}
0 & 0 &\ell\\
D_{\lambda}F(\blambda,\bu) &D_uF(\blambda,\bu) &0 \\
D_{\lambda u}F(\blambda,\bu) \bphi & D_{uu}F(\blambda,\bu) \bphi & D_u F(\blambda,\bu)
\end{array}
\right).
\end{equation}
Since~$D_uF(\lambda,u)$ is given as the sum of~$-\Delta^{2}$ and relatively 
compact terms, the leading order terms in~$D\cF(\bx)$ are of the form
\begin{equation*}
\left(
\begin{array}{ccc}
0 &
0 &
0 \\

0 &
-\Delta^2 &
0 \\ 

0  &
0 &
-\Delta^2
\end{array}
\right).
\end{equation*}
At least for the high frequencies, the inverse of the operator~$D\cF(\bx)$ 
should therefore be well approximated by
\begin{equation*}
\left(
\begin{array}{ccc}
0 &
0 &
0 \\

0 &
-\Delta^{-2} &
0 \\ 

0  &
0 &
-\Delta^{-2}
\end{array}
\right).
\end{equation*}
Regarding the low frequencies, we consider an operator~$A^{\leq N}$ between the
finite-dimensional spaces~$\Pi^{\leq N}\cY$ and~$\Pi^{\leq N}\cX$, obtained by 
numerically inverting $\Pi^{\leq N} D\cF(\blambda,\bu,\bphi)\Pi^{\leq N}$, and 
then define the full operator~$A$ as follows
\begin{align*}
A := A^{\leq N} - \Delta^{-2}\Pi^{>N},
\end{align*}
with some slight abuse of notation. More precisely, given $y=(\lambda,u,\phi)\in\cY$,
we define
\begin{align*}
\begin{cases}
A\Pi^{\leq N} y := A^{\leq N}\Pi^{\leq N}y \\
A\Pi^{> N} y := (0,-\Delta^{-2}\Pi^{>N} u, -\Delta^{-2}\Pi^{>N} \phi),
\end{cases}
\end{align*}
where, for any $u$ in $\ell^1_s$, 
\begin{align*}
\left(\Delta^{-1} u\right)_n := 
\begin{cases}
0 &\quad  n=0,\\
-\left(\displaystyle \sum_{i=1}^d \left(\frac{n_i \pi}{L_i}\right)^2\right)^{-1} u_n &\quad n\in\N^d\setminus\{0\}.
\end{cases}
\end{align*}
Using the expression~$\Delta^{-1}$ is a slight abuse of notation, justified by the fact
that~$\Delta^{-1}$ (or in this case its square~$\Delta^{-2} = (\Delta^{-1})^2$)
only appears composed with the projection~$\Pi^{>N}$ on high frequencies.

In the sequel, we assume that the approximate zero~$(\blambda,\bu,\bphi)$ of~$\cF$ 
belongs to~$\Pi^{\leq N}\cX$, for the same truncation parameter~$N$ as the one used 
for constructing~$A$. This choice is made with the purpose of simplifying the 
exposition of the estimates to come, but we emphasize that selecting different 
truncation parameters for the approximate solution and for the approximate inverse
would make the proof more efficient in practice.

\subsubsection{Bounds $Y$, $Z^1$ and $Z^2$ for Theorem~\ref{thm:NewtonKantorovich}}
\label{sec:boundsOK}

We now provide computable estimates~$Y_m$, $Z^1_{m,i}$ and~$Z^2_{m,i,j}$ satisfying 
assumptions~\eqref{eq:def_Y}-\eqref{eq:def_W} of Theorem~\ref{thm:NewtonKantorovich}, 
for the operator~$T$ given by~\eqref{eq:T}, with the spaces~$\cX$ and~$\cY$, and
both the map~$\cF$ and the linear operator~$A$ introduced in Section~\ref{sec:F_A_OK}. 
For notational convenience, we write $x = (\lambda,u,\phi)$ and $\bx = (\blambda,\bu,\bphi)$.  
We again refer readers unfamiliar with computer-assisted proofs to~\cite[Section~1.3.1]{Bre25}, 
where similar estimates are derived in a simpler context, and with more details.  

\paragraph{The $Y$ bounds.} Since the nonlinear operator~$\cF$ is polynomial, 
the evaluation~$\cF(\bx)$ is finite (note that in fact it belongs to~$\Pi^{\leq
3N}\cX$), and so is $T(\bx)-\bx = A\cF(\bx)$. 
Therefore, obtaining the bounds $Y=(Y_1,Y_2,Y_3)$ only amounts to finite computations,
and we can control rounding errors by using interval arithmetic~\cite{MooKeaClo09,Tuc11}.

\paragraph{The $Z^1$ bounds.}
In order to obtain the $Z^1$ estimates, one has to estimate the norm of the different 
components of $DT(\bx) = I-AD\cF(\bx)$. Such estimates are standard but somewhat tedious 
to write out fully because of the different components, and can be found in
Appendix~\ref{sec:app:Z12}. We only give a brief overview of their derivation here. 
To find~$Z^1$, we divide our estimates into an estimate on the finite-dimensional 
space, and an estimate on the tail such that, for $1\leq m,i\leq 3$,
\begin{equation*}
Z^1_{m,i} := \max\left(Z^{1,\finite}_{m,i},\, Z^{1,\tail}_{m,i}\right).
\end{equation*}
We can obtain all terms~$Z^{1,\finite}_{m,i}$ with finite numerical calculations using 
interval arithmetic, as more precisely described in the Appendix. 
The terms for the tail involve analytical calculations. Our derivations in the 
appendix show that one can take the following for the estimates of the tail terms: 
\begin{equation*}
\begin{array}{cc}
Z^{1,\tail}_{1,i}=0 & \mbox{ for }  i=1,2,3,\\ 
Z^{1,\tail}_{m,1}=0 & \mbox{ for } m=1,2,3, 
\end{array}
\end{equation*}
\begin{align*}
Z_{2,2}^{1,\tail} = Z_{3,3}^{1,\tail} = \vert\blambda\vert \left( \frac{\left\Vert 1-3\bu^2\right\Vert_X}{\rho_N} +\frac{\vert\sigma\vert}{\rho_{3N}^2} \right), \quad \mbox{and} \quad
Z_{3,2}^{1,\tail} = 6\vert\blambda\vert \frac{\left\Vert \bu\bphi\right\Vert_X}{\rho_N},\ Z_{2,3}^{1,\tail}=0,
\end{align*}
where $\rho_N$ is defined in~\eqref{eq:rhoN}.

\paragraph{The $Z^2$ bounds.} These estimates are obtained by computing the second 
derivative of~$\cF$ and bounding each of its components using the Banach algebra 
property of~$X$. The calculation involves many terms, but each one is elementary.
A concrete example is given in Appendix~\ref{sec:app:Z12}.

\subsection{Application to the SKT system}
\label{sec:proofSKT}

In this section we focus on the SKT system~\eqref{parabolicsys}, which is used 
to study the evolution of populations with two competing species~\cite{ShiKawTer79}. 
The equilibria are solutions of the elliptic system given by  
\begin{equation}
\label{eq:SKT}
\begin{array}{cc}
			F(\lambda,u) = \left(
			\begin{array}{l}
			\DS \Delta \left( \lambda u_1 + d_{11} u_1^2 + d_{12} u_1u_2 \right)
            + \left( r_1 u_1 - a_1 u_1^2 - b_1 u_1u_2 \right)  \\[1ex]
			\DS \Delta \left( \lambda u_2 + d_{21} u_1u_2 + d_{22} u_2^2 \right)
      + \left( r_2 u_2 - a_2 u_2^2 - b_2 u_1u_2 \right)
			\end{array}
			\right)
            =0	& \mbox{ in }\Omega, \\[3ex]
			\partial_n u =  0 & \mbox{ on }\partial\Omega 
\end{array}
\end{equation}
where $u = (u_1,u_2)$ denotes the densities of the two species. One key feature 
of this model is the presence of cross-diffusion terms, which can give rise to a 
repulsive effect leading to pattern formation, i.e., to non-homogeneous steady states. 
There is an extensive body of work dedicated to establishing the existence of such 
steady states, and we refer in particular to~\cite{LouNi96} for a thorough study of 
the competing effects of diffusion ($\lambda$), cross-diffusion ($d_{12}$ and~$d_{21}$) 
and self-diffusion ($d_{11}$ and~$d_{22}$), which leads to (non-)existence results of 
non-homogeneous steady states. A computer-assisted approach was also introduced
in~\cite{breden:22a}, enabling a more quantitative study of these steady states, 
especially in parameter regimes where many of them co-exist.

Here, we again consider rectangular domains $\Omega \subset \R^d$, given by 
$\Omega = \prod_{i=1}^d (0,L_i)$, and study symmetry-breaking bifurcations of 
steady states of~\eqref{eq:SKT}. To that end, we apply Theorem~\ref{genbif:thm:extsys}(b)
via Theorem~\ref{thm:NewtonKantorovich}, in a very similar fashion to what we did in 
Section~\ref{sec:proofOK} for the Ohta--Kawasaki equation. Therefore, we describe
the first steps in a more concise way. The only significant new challenge is the 
presence of nonlinear terms inside the Laplacian, which requires a more involved 
approximate inverse~$A$. However, we again emphasize that these difficulties are 
already present when trying to obtain computer-assisted proofs for regular steady
states of~\eqref{eq:SKT}, and the approach proposed in~\cite{breden:22a} to handle 
such problems generalizes in a straightforward way to the extended 
system~\eqref{genbif:eqn:extsys1}. 

\subsubsection{Zero finding problem and approximate inverse}

We use again the notation introduced in Section~\ref{sec:sequence_space}.
Since~\eqref{eq:SKT} is a two-component system, we consider
$X = \ell^1_s \times \ell^1_s$, also with $s\geq 0$, $Y = \ell^1_{s-2} \times 
\ell^1_{s-2}$, and $F:\R\times X\to Y$ from~\eqref{eq:SKT}, together
with the abbreviation~$F = (F_1,F_2)$.

The map~$\cF$ defining the extended system~\eqref{genbif:eqn:extsys1} is
considered between the spaces~$\cX$ and~$\cY$, where this time one has
$\cX = \R\times \ell^{1,\sym}_s\times \ell^{1,\sym}_s\times \ell^{1}_s 
\times \ell^{1}_s$ and $\cY = \R\times \ell^{1,\sym}_{s-2}\times 
\ell^{1,\sym}_{s-2}\times \ell^{1}_{s-2} \times \ell^{1}_{s-2}$. 
Once again, the precise definition of the symmetry subspaces~$\ell^{1,\sym}_{s}$ 
and~$\ell^{1,\sym}_{s-2}$ is not important at this stage, and we merely assume 
that the inclusion $F(\R,X_\sym)\subset Y_\sym$ is satisfied, and that the
restriction~$D_u F(\lambda,u)|_{X_{\sym}} \in \cL(X_{\sym},Y_{\sym})$ is a 
Fredholm operator of index zero. With the notation of 
Section~\ref{sec:NewtonKantorovich} one then has $\cX^{1} = \R$, 
$\cX^2 = \cX^3 = \ell^{1,\sym}_{s}$, $\cX^4 = \cX^5 = \ell^{1}_{s}$, and, 
denoting $x=(\lambda,u,\phi)=(\lambda,u_1,u_2,\phi_1,\phi_2)$ a generic 
element of~$\cX$, the partial derivatives correspond to $D_1 = D_\lambda$, 
$D_2= D_{u_1}$, $D_3= D_{u_2}$, $D_4 = D_{\phi_1}$, and $D_5 = D_{\phi_2}$. 

Because of the nonlinear terms inside the Laplacian operator, the construction 
of the approximate inverse~$A$ is slightly more complicated than in
Section~\ref{sec:F_A_OK}, and follows the strategy that was introduced 
in~\cite{breden:22a}. 
We still start by numerically constructing~$A^{\leq N}$, which is an approximation 
of $\Pi^{\leq N} D\cF(\bx)^{-1}\Pi^{\leq N}$. It will be convenient to split~$A^{\leq N}$ 
as a $5\times 5$-block matrix $\left(A^{\leq N}_{i,j}\right)_{1\leq i,j\leq 5}$, 
where, with the notation of Theorem~\ref{thm:NewtonKantorovich}, 
$A^{\leq N}_{i,j} = \pi_i A^{\leq N} \pi_j$. The main difference will be that, 
in the tail part of $A$, we can no longer simply use~$\Delta^{-1}$.
Let us now introduce
\begin{align*}
\alpha =
\begin{pmatrix}
\blambda + 2d_{11}\bu_1 + d_{12}\bu_2 & d_{12}\bu_1 \\
d_{21}\bu_2 & \blambda + d_{21}\bu_1 + 2d_{22}\bu_2
\end{pmatrix}, \;
\beta =
\begin{pmatrix}
2d_{11}\bphi_1 + d_{12}\bphi_2 & d_{12}\bphi_1 \\
d_{21}\bphi_2 & d_{21}\bphi_1 + 2d_{22}\bphi_2
\end{pmatrix},
\end{align*}
which are matrices with entries in~$\ell^1_s$ appearing naturally in~$D\cF(\bx)$. 
More precisely, 
\begin{align}
\label{eq:DuFSKT}
D_uF(\blambda,\bu) u = \Delta \left(\alpha u\right) + \text{lower order terms},
\end{align} 
and
\begin{align*}
D_{uu}F(\blambda,\bu)(u,\bphi) = \Delta \left(\beta u\right) + \text{lower order terms}.
\end{align*} 
Thus, looking back at~\eqref{eq:DFblocks}, the leading order terms in~$D\cF(\bx)$ 
for this example are
\begin{equation*}
\left(
\begin{array}{ccc}
0 & 0 & 0 \\ 
0 & \Delta(\alpha \cdot) & 0 \\ 
0 & \Delta(\beta \cdot) & \Delta(\alpha \cdot)
\end{array}
\right).
\end{equation*}
Next we numerically compute 
\begin{align*}
\bw =
\begin{pmatrix}
\bw_{11} & \bw_{12} \\
\bw_{21} & \bw_{22}
\end{pmatrix}
\qquad \text{and}\qquad 
\bg =
\begin{pmatrix}
\bg_{11} & \bg_{12} \\
\bg_{21} & \bg_{22}
\end{pmatrix},
\end{align*}
with entries~$\bw_{ij}$ and~$\bg_{ij}$ in~$\Pi^{\leq N}X$, such that 
$\bw \approx \alpha^{-1}$ and $\bg \approx - \bw \beta \bw$, which
then means that
\begin{equation*}
\left(
\begin{array}{cc}
\bw & 0\\
\bg & \bw
\end{array}
\right)
\approx 
\left(
\begin{array}{cc}
\alpha & 0 \\
\beta & \alpha
\end{array}
\right)^{-1}.
\end{equation*}
Finally one defines~$A$ by having it act like 
$\begin{pmatrix}
\bw & 0 \\
\bg & \bw
\end{pmatrix} \Delta^{-1}$, except on the finite-dimensional subspaces
defined by~$\Pi^{\leq N}$, on which we use~$A^{\leq N}$. More precisely, 
for all $y=(\lambda,u,\phi)\in\cY$,
\begin{align*}
\begin{cases}
A\Pi^{\leq N} y := A^{\leq N}\Pi^{\leq N}y + \left(0,\ \Pi^{> N}[\bw  \Delta^{-1}\Pi^{\leq
N} u],\ \Pi^{> N}[\bg  \Delta^{-1}\Pi^{\leq N}u + \bw  \Delta^{-1}\Pi^{\leq N} \phi]\right)\\[1ex]
A\Pi^{> N} y := \left(0,\ \bw \Delta^{-1} \Pi^{> N} u,\ \bg  \Delta^{-1}\Pi^{> N}u + \bw  \Delta^{-1}\Pi^{> N} \phi\right).
\end{cases}
\end{align*}
We refer to~\cite{breden:22a} for a more schematic description of~$A$, in a simpler 
context with fewer components in~$\cF$. We also note that, as explained 
in~\cite{breden:22a}, a successful application of Theorem~\ref{thm:NewtonKantorovich} 
to this problem implies that~$\alpha$ is invertible, and therefore we see 
from~\eqref{eq:DuFSKT} that~$D_uF(\blambda,\bu)$ is indeed a Fredholm operator 
of index~$0$.

\subsubsection{Bounds $Y$, $Z^1$ and $Z^2$ for Theorem~\ref{thm:NewtonKantorovich}}

The way the bounds are derived is very similar to what we did in
Section~\ref{sec:boundsOK}, with technical adaptations due to the more 
complicated structure of~$A$. These bounds are contained in 
Appendix~\ref{sec:boundsSKT}.

\section{Determining the bifurcation type}
\label{sec:bif}

So far, we have given a description of how to show that a bifurcation occurs.
In this section, we present details on how one can distinguish the bifurcation type. 

\subsection{Establishing bifurcation symmetries}
\label{sec:pitchfork}

We now describe more precisely some of the various symmetries that we encountered 
in our numerical explorations of the bifurcation diagrams for the Ohta--Kawasaki
equation~\eqref{eq:OK} and the SKT system~\eqref{eq:SKT}, for 
$\Omega = \prod_{i=1}^d (0,L_i)$, in dimensions $d=1$ and $d=2$. In particular, we 
identify some cases in which we can prove that the bifurcation must be of pitchfork 
type. Using the notations of Section~\ref{sec:genbif} and 
Theorem~\ref{genbif:thm:suffcond}(b), we therefore strive to prove that 
\begin{align}
\label{eq:condition_pitchfork}
\psibif^*\left( D_{uu}F(\lambdabif,\ubif)[\phibif,\phibif]\right) = 0.
\end{align}
To that end, the following observation will prove to be helpful. 
\begin{lemma}
\label{lem:psi_*}
Under Assumptions~\ref{genbif:assump:nec} and~\ref{genbif:assump:sym}, $\psibif^*|_{Y_{\sym}} = 0$.
\end{lemma}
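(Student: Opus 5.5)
The plan is to show that $Y_{\sym} \subset R(L)$. The conclusion then follows at once from $R(L) = N(\psibif^*)$ in Assumption~\ref{genbif:assump:nec}. This is the same surjectivity argument already used in the first part of the proof of Theorem~\ref{genbif:thm:suffcond}, so I would simply isolate it.

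First I check that the restricted operator $L|_{X_{\sym}} : X_{\sym} \to Y_{\sym}$ is injective. Its kernel is $N(L) \cap X_{\sym}$. By Assumption~\ref{genbif:assump:nec}, $N(L) = \mathrm{span}[\phibif]$, so any element of the kernel has the form $\alpha\phibif$ and lies in $X_{\sym}$. Since $X_{\sym}$ is a linear subspace and $\phibif \notin X_{\sym}$ by Assumption~\ref{genbif:assump:sym}, we must have $\alpha = 0$. Hence $N(L|_{X_{\sym}}) = \{0\}$.

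Second, Assumption~\ref{genbif:assump:sym} says $L|_{X_{\sym}}$ is Fredholm of index zero. For such an operator, $\dim N = \operatorname{codim} R$, so injectivity gives $\operatorname{codim} R(L|_{X_{\sym}}) = 0$ in $Y_{\sym}$. Because the range of a Fredholm operator is closed, this means $L(X_{\sym}) = Y_{\sym}$.

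Finally, take any $y \in Y_{\sym}$ and write $y = L x$ with $x \in X_{\sym}$. Then $y \in R(L) = N(\psibif^*)$, so $\psibif^*(y) = 0$, which gives $\psibif^*|_{Y_{\sym}} = 0$.

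There is no real obstacle here. The only point needing care is that the Fredholm index of the restriction is computed with respect to the target $Y_{\sym}$ and not $Y$. This is exactly why Assumption~\ref{genbif:assump:sym} asks for the restricted operator, rather than $L$ itself, to be Fredholm of index zero.
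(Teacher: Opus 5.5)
Your proposal is correct and follows the paper's proof. The paper likewise cites the surjectivity of $L|_{X_{\sym}} : X_{\sym} \to Y_{\sym}$, established in the proof of Theorem~\ref{genbif:thm:suffcond} by the same injectivity-plus-index-zero argument you spell out, and concludes $Y_{\sym} \subset R(L) = N(\psibif^*)$.
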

\begin{proof}
As already noted in the proof of Theorem~\ref{genbif:thm:suffcond}, $L|_{X_{\sym}}:X_\sym\to Y_\sym$ is onto. Therefore, $Y_\sym \subset R(L) = N(\psibif^*)$.
\end{proof}

\subsubsection{The one-dimensional case}
\label{sec:sym_1D}

We first consider both the SKT system~\eqref{eq:SKT} and the Ohta--Kawasaki 
equation~\eqref{eq:OK} on the one-dimensional domain $\Omega=(0,L)$. The 
symmetries we need to consider can be encoded in symmetry spaces of the form
\begin{align*}
\ell^{1,\sym}_s = \{u \in \ell^1_s, \; u_n = 0 \text{ for all }n \notin \nsym\N \},
\end{align*}
for some $\nsym \in\N_{\geq 2}$, which is specified in Table~\ref{tab:OK} and Table~\ref{tab:SKT} for each of our examples. 

Our goal here is to prove that, for $\nsym=2$ and $\nsym=5$ (possibly under additional 
assumptions), the symmetry-breaking bifurcations we proved to exist in 
Section~\ref{sec:examples} must be of pitchfork type, by showing that
$D_{uu}F(\lambdabif,\ubif)[\phibif,\phibif]$ has to lie in a subspace which 
is contained in the kernel of $\psibif^*$.
For this, we first introduce
\begin{align*}
\ell^{1,\asym}_s = \{u \in \ell^1_s, \; u_n = 0 \text{ for all }n \in \nsym\N \},
\end{align*}
such that $\ell^{1}_s = \ell^{1,\sym}_s \oplus \ell^{1,\asym}_s$. We then 
take $X_\sym=\ell^{1,\sym}_s$ and $Y_\sym=\ell^{1,\sym}_{s-4}$ for the 
Ohta--Kawasaki equation, and $X_\sym=\ell^{1,\sym}_s\times\ell^{1,\sym}_s$, 
$Y_\sym=\ell^{1,\sym}_{s-2}\times\ell^{1,\sym}_{s-2}$ for the SKT system. 
Introducing $X_\asym=\ell^{1,\asym}_s$ and $Y_\asym=\ell^{1,\asym}_{s-4}$,
or $X_\asym=\ell^{1,\asym}_s\times\ell^{1,\asym}_s$ and 
$Y_\asym=\ell^{1,\asym}_{s-2}\times\ell^{1,\asym}_{s-2}$, the trigonometric
formula $\cos(p)\cos(q) = \frac{1}{2}\cos(p+q) + \frac{1}{2}\cos(p-q)$ yields 
that $L(X_\asym) \subset Y_\asym$. In particular, Assumption~\ref{genbif:assump:sym} 
implies $\phibif\in X_\asym$. For the remainder of the discussion, we have to
distinguish cases.

\paragraph*{Case $\nsym=2$.}  For this specific value of~$\nsym$, the product of two 
elements of~$\ell^{1,\asym}_s$ is in~$\ell^{1,\sym}_s$ (the product of two odd
functions is even), therefore $D_{uu}F(\lambdabif,\ubif)[\phibif,\phibif]\in Y_{\sym}$, 
and by Lemma~\ref{lem:psi_*} we must have~\eqref{eq:condition_pitchfork}. In summary,
when $\nsym = 2$ we can only have pitchfork bifurcations.

\paragraph*{Case $\nsym=5$.} Here the situation is more subtle, because the product 
of two elements of~$\ell^{1,\asym}_s$ no longer needs to be in~$\ell^{1,\sym}_s$. 
We therefore need to further decompose the space~$\ell^{1,\asym}_s$, and introduce
\begin{align*}
\ell^{1,\asym_1}_s = \left\{u \in \ell^{1,\asym}_s, \; u_n = 0 \text{ for all }n \in (5\N+2)\cup (5\N+3) \right\},
\end{align*}
\begin{align*}
\ell^{1,\asym_2}_s = \left\{u \in \ell^{1,\asym}_s, \; u_n = 0 \text{ for all }n \in (5\N+1)\cup (5\N+4) \right\},
\end{align*}
such that $\ell^{1,\asym}_s = \ell^{1,\asym_1}_s \oplus \ell^{1,\asym_2}_s$. The 
spaces $X_{\asym_i}$ and $Y_{\asym_i}$ for $i=1,2$ are defined accordingly.
Since~$\ubif\in X_{\sym}$, the above-mentioned trigonometric formula for 
$\cos (p) \cos (q)$ implies the inclusions $L(X_{\asym_1}) \subset Y_{\asym_1}$ 
and $L(X_{\asym_2}) \subset Y_{\asym_2}$. 

We now make two additional assumptions, namely that $\phibif\notin X_{\asym_2}$, and 
that the restriction $L|_{X_{\asym_2}}:X_{\asym_2}\to Y_{\asym_2}$ is a Fredholm
operator of index~$0$. One then readily obtains $\phibif\in X_{\asym_1}$, and the 
same arguments as in Lemma~\ref{lem:psi_*} yield that $\psibif^*|_{Y_{\sym}\oplus 
Y_{\asym_2}} = 0$. On the other hand, the product of two elements of $\ell^{1,\asym_1}_s$ 
lies in $\ell^{1,\sym}_s\oplus \ell^{1,\asym_2}_s$ (using again the trigonometric 
formula for $\cos (p) \cos (q)$), and therefore $D_{uu}F(\lambdabif,\ubif)[\phibif,\phibif]$ 
belongs to the space~$Y_{\sym}\oplus Y_{\asym_2}$, and one 
obtains~\eqref{eq:condition_pitchfork}. In summary, under the above additional
assumptions (which are easy to verify in practice) that~$\phibif\notin X_{\asym_2}$ 
and that the restriction~$L|_{X_{\asym_2}}:X_{\asym_2}\to Y_{\asym_2}$ is a Fredholm 
operator of index~$0$, we have a pitchfork bifurcation for $\nsym = 5$. We note that 
$\ell^{1,\asym_1}_s$ and $\ell^{1,\asym_2}_s$ play interchangeable roles here, so 
the same conclusion would hold if $\phibif\notin X_{\asym_1}$ and 
$L|_{X_{\asym_1}}:X_{\asym_1}\to Y_{\asym_1}$ is a Fredholm operator of index~$0$.

There are other values of~$\nsym$, such as for example~$\nsym=3$, for which no similar 
argument can be made to work. Unsurprisingly, in practice we observe transcritical
bifurcations for such values of~$\nsym$. A computer-assisted method for proving that 
a given bifurcation is indeed transcritical is given in Section~\ref{sec:transcritical}.

\subsubsection{The two-dimensional case}

As could be expected, there is a broader variety of symmetries in the two-dimensional 
case, i.e., when we consider the SKT system~\eqref{eq:SKT} or the Ohta--Kawasaki 
equation~\eqref{eq:OK} on the domain $\Omega=(0,L_1)\times(0,L_2)$. Below are some 
examples of subspaces of~$\ell^1_s$ encoding some of the symmetries that we found 
numerically (and that were broken through some bifurcation):
\begin{align*}
&\left\{u \in \ell^1_s, \; u_n = 0 \text{ for all }n \notin \left(\nsym_1 2\N\times \nsym_2 2\N\right) \cup \left(\nsym_1 (2\N+1) \times \nsym_2 (2\N+1)\right)\right\}, \\
&\left\{u \in \ell^1_s, \; u_n = 0 \text{ for all }n \notin \nsym_1 \N\times \nsym_2 \N\right\}, \\
&\left\{u \in \ell^1_s, \; u_n = 0 \text{ for all }n \notin \nsym_1 (2\N+1)\times \nsym_2 (2\N+1)\right\}, \\
&\left\{u \in \ell^1_s, \; u_n = 0 \text{ for all }n \notin \nsym_1 \N\times \nsym_2 (2\N+1)\right\},
\end{align*}
where each time $\nsym \in(\N_{\geq 1})^2$. Each of these was observed in the
two-dimensional Ohta--Kawasaki model. We now focus on the first case, namely
\begin{equation*}
\ell^{1,\sym}_s = \left\{u \in \ell^1_s, \; u_n = 0 \text{ for all }n \notin \left(\nsym_1 2\N\times \nsym_2 2\N\right) \cup \left(\nsym_1 (2\N+1) \times \nsym_2 (2\N+1)\right)\right\},
\end{equation*}
which is the one that also occurs for the two-dimensional SKT system, and show how the 
bifurcation can be proven to be of pitchfork type for some values of~$\nsym$ (sometimes 
under extra assumptions, that are easy to verify in practice). 
As in Section~\ref{sec:sym_1D}, one mainly has to introduce a suitable
complement~$\ell^{1,\asym}_s$ of~$\ell^{1,\sym}_s$, given by
\begin{equation*}
\ell^{1,\asym}_s = \left\{u \in \ell^1_s, \, u_n = 0 \text{ for all }n \in \left(\nsym_1 2\N\times \nsym_2 2\N\right) \cup \left(\nsym_1 (2\N+1) \times \nsym_2 (2\N+1)\right)\right\},
\end{equation*}
and possibly to split $\ell^{1,\asym}_s$ into further subspaces that are stable
under~$L$, and behave appropriately under products. The subspaces~$X_\sym$, $X_\asym$, 
and~$Y_\sym$, etc. are defined as in the one-dimensional case.

\paragraph*{Case $\nsym = (2,1)$ for the Ohta--Kawasaki equation.} We have
\begin{equation*}
\ell^{1,\sym}_s = \left\{u \in \ell^1_s, \; u_n = 0 \text{ for all }n \notin \left(4\N\times 2\N\right) \cup \left((4\N+2) \times (2\N+1)\right)\right\},
\end{equation*}
and we consider
\begin{equation*}
\ell^{1,\asym_1}_s = \left\{u \in \ell^1_s, \; u_n = 0 \text{ for all }n \notin \left(4\N\times (2\N+1)\right) \cup \left((4\N+2) \times 2\N\right)\right\},
\end{equation*}
\begin{equation*}
\ell^{1,\asym_2}_s = \left\{u \in \ell^1_s, \; u_n = 0 \text{ for all }n \notin \left((4\N+1)\times \N\right) \cup \left((4\N+3)\times \N\right)\right\},
\end{equation*}
so that
\begin{align*}
\ell^1_s = \ell^{1,\sym}_s \oplus \ell^{1,\asym_1}_s \oplus \ell^{1,\asym_2}_s.
\end{align*}
It is straightforward to check that both~$L(X_{\asym_1}) \subset Y_{\asym_1}$ and 
$L(X_{\asym_2}) \subset Y_{\asym_2}$ are satisfied. Assuming
that~$\phibif\notin \ell^{1,\asym_2}_s$ holds, one then obtains
$\phibif\in \ell^{1,\asym_1}_s$, and straightforward calculations show that $\phibif^2\in \ell^{1,\sym}_s$ 
and $\ubif\phibif^2\in\ell^{1,\sym}_s$. Thus,
\begin{equation*}
D_{uu}F(\lambdabif,\ubif)[\phibif,\phibif] = 6\lambdabif \Delta(\ubif\phibif^2) \in \ell^{1,\sym}_{s-2}.
\end{equation*} 
Finally, using Lemma~\ref{lem:psi_*}, one indeed obtains the equality 
in~\eqref{eq:condition_pitchfork}. In summary, under the additional assumption
$\phibif\notin \ell^{1,\asym_2}_s$, the bifurcation 
is of pitchfork type.

\paragraph*{Case $\nsym = (1,4)$ for the SKT system.} The procedure is exactly the same, with
\begin{equation*}
\ell^{1,\sym}_s = \left\{u \in \ell^1_s, \; u_n = 0 \text{ for all }n \notin \left(2\N\times 8\N\right) \cup \left((2\N+1) \times (8\N+4)\right)\right\},
\end{equation*}
\begin{equation*}
\ell^{1,\asym_1}_s = \left\{u \in \ell^1_s, \; u_n = 0 \text{ for all }n \notin \left((2\N+1)\times 8\N\right) \cup \left(2\N \times (8\N+4)\right)\right\},
\end{equation*}
\begin{equation*}
\ell^{1,\asym_2}_s = \left\{u \in \ell^1_s, \; u_n = 0 \text{ for all }n \notin \N \times \bigcup_{j\notin \{0,4\}} (8\N+j)\right\},
\end{equation*}
and taking into account the different formula for
$D_{uu}F(\lambdabif,\ubif)[\phibif,\phibif]$. Under the additional assumption 
$\phibif\notin X_{\asym_2}$, the bifurcation is a pitchfork bifurcation.

There are other values of~$\nsym$, such as for example~$\nsym=(1,3)$, for which no 
similar argument can be made to work. Unsurprisingly, in practice we observe 
transcritical bifurcations for such values of~$\nsym$. A computer-assisted method 
for proving that a given bifurcation is indeed transcritical is presented below.

\subsection{Proving transcritical bifurcations}
\label{sec:transcritical}

In this last section, we present a method which allows us to prove that a given 
bifurcation, obtained via Theorem~\ref{thm:NewtonKantorovich} and 
Theorem~\ref{genbif:thm:extsys}(b), is transcritical. Using the notations of 
Section~\ref{sec:genbif} and Theorem~\ref{genbif:thm:suffcond}(a), we therefore
strive to prove that 
\begin{align}
\label{eq:condition_transcritical}
\psibif^*\left( D_{uu}F(\lambdabif,\ubif)[\phibif,\phibif]\right) \neq 0.
\end{align}
In contrast to the pitchfork bifurcations discussed in Section~\ref{sec:pitchfork}, 
whose existence follows from specific structural assumptions on the solutions, 
transcritical bifurcations are generic, which makes it harder to prove by hand 
that a given bifurcation is actually transcritical. Informally speaking, 
if $\psibif^*\left( D_{uu}F(\lambdabif,\ubif)[\phibif,\phibif]\right)$ is 
in fact equal to~$0$, then there should be a good reason for why this is the
case, and we might be able to understand it. However, there may not be a clear 
reason why this quantity is different from~$0$.

Fortunately, this is one of the many situations where computer-assisted proofs 
complement more traditional techniques very well. Indeed, once we have rigorous 
enclosures of~$\lambdabif$, $\ubif$, $\phibif$, and~$\psibif$, we readily get a 
rigorous enclosure of $\psibif^*\left( D_{uu}F(\lambdabif,\ubif)[\phibif,\phibif]\right)$. 
From such an enclosure, one cannot hope to prove that~\eqref{eq:condition_pitchfork} 
holds, but as we saw in Section~\ref{sec:pitchfork} this can be done by hand. On the 
other hand, provided the enclosure is sharp enough, it can be used to 
prove~\eqref{eq:condition_transcritical}, which was the case that seemed harder 
to study by hand.

The computer-assisted proofs for the existence of a bifurcation described in 
Sections~\ref{sec:NewtonKantorovich}-\ref{sec:proofSKT} already gave us relatively 
tight enclosures for~$\lambdabif$, $\phibif$, and~$\ubif$. The missing piece is an
explicit enclosure of~$\psibif^*$, which we are going to obtain using a separate 
computer-assisted proof. Once this is done, we readily get a computable enclosure 
for $\psibif^*\left( D_{uu}F(\lambdabif,\ubif)[\phibif,\phibif]\right)$, and can 
check whether it implies~\eqref{eq:condition_transcritical}.

Recall that we want to find~$\psibif^*\in Y^*$ such that $R(L) = N(\psibif^*)$. 
By the Fredholm alternative, this is equivalent to finding a nontrivial 
$\psibif^*\in N(L^*)$. The equation $L^*\psi^* = 0$ is the one we are going 
to solve using a computer-assisted proof which is very similar to the ones described 
earlier in Section~\ref{sec:CAPs}.

We now focus on rigorously enclosing $\psi^*$ for the Ohta--Kawasaki equation 
of Section~\ref{sec:proofOK}, but the procedure we propose is easily adapted 
to the SKT system of Section~\ref{sec:proofSKT}.
First, we recall that we work here with spaces $X$ and $Y$ which are 
weighted $\ell^1$ spaces, and whose duals are thus
(isomorphic to) weighted $\ell^\infty$ spaces. We therefore introduce, 
for $s\in\R$, the spaces
\begin{equation*}
\ell^\infty_{-s} := \left\{ u=(u_n)_{n\in\N^d}, \ \left\Vert u\right\Vert_{\ell^\infty_{-s}} := \sup_{n\in\N^d} \frac{\vert u_n\vert}{\xi^{(s)}_n}  <\infty \right\}.
\end{equation*}
We then have that $\left(\ell^1_s\right)^* \cong \ell^\infty_{-s}$.
In particular, any $\psi^*\in \left(\ell^1_s\right)^*$ can be uniquely represented by the element $\psi\in \ell^\infty_{-s}$ such that, for all $u\in \ell^1_s$,
\begin{align*}
\psi^*(u) = \langle \psi^*,u\rangle_{(\ell^1_s)^*,\ell^1_s} =  \sum_{n\in\N^d} \psi_n u_n.
\end{align*}
With this identification, we have $\Vert \psi^*\Vert_{\left(\ell^1_s\right)^*} = \Vert \psi\Vert_{\ell^\infty_{-s}}$.


In Section~\ref{sec:proofOK}, for the purpose of enclosing~$\lambdabif$,
$\ubif$, and~$\phibif$, we considered the two sequence spaces $X=\ell^1_s$ and
$Y = \ell^1_{s-4}$, for some $s\geq 0$. In order to enclose $\psibif$, we thus 
consider the zero-finding problem $\cG^*:\R\times \ell^\infty_{-(s-4)} \to 
\R\times \ell^\infty_{-s}$ given by
\begin{align*}
\cG^*(\mu,\psi) = (\psi^*(\bphi)-1, D_uF(\lambdabif,\ubif)^*\psi - \mu\psi),
\end{align*}
where~$\bphi$ is the approximation of~$\phibif$ used to validate the existence of the 
bifurcation. At first glance, also solving for~$\mu$ and not only for~$\psi$ might look 
surprising, as we already know we want~$\mu$ to be equal to~$0$. However, if we want to 
successfully apply Theorem~\ref{thm:NewtonKantorovich}, the solution we look for must 
be locally unique, and adding~$\mu$ as an unknown is a natural way of regularizing the 
problem. Moreover, this extra unknown is no impediment, as there is a simple a posteriori 
check that can be used to prove that, if we have a zero of~$\cG^*$, then~$\mu$ was in 
fact equal to~$0$.

\begin{lemma}
Let $(\mu_0,\psibif)$ be a zero of the above-defined operator~$\cG^*$. In
addition, suppose that $\Vert \psibif^* - \bpsi^* \Vert_{Y^*} \leq r_\psi$ 
and that $\Vert \phibif - \bphi \Vert_X \leq r_\phi$. If  
\begin{equation}
\label{eq:cond_psiphi}
\left\Vert \bpsi^*\right\Vert_{Y^*} r_\phi + \left\Vert \bphi\right\Vert_{Y} r_\psi + r_\psi r_\phi < \left\vert \bpsi^*(\bphi) \right\vert,
\end{equation}
then the identity $D_uF(\lambdabif,\ubif)^*\psibif=0$ is satisfied. 
\end{lemma}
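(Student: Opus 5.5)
The plan is to show that if $\mu_0\neq 0$, then $\psibif^*(\phibif)=0$, and then to use~\eqref{eq:cond_psiphi} to show that $\psibif^*(\phibif)$ cannot vanish. Write $L=D_uF(\lambdabif,\ubif)$. Since $(\mu_0,\psibif)$ is a zero of $\cG^*$, we have $L^*\psibif = \mu_0\psibif$ together with the normalization $\psibif^*(\bphi)=1$. We also know from the extended system that $L\phibif=0$.

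The first step is a duality computation. Pairing the eigen-equation with $\phibif$ gives
\begin{equation*}
\mu_0\,\psibif^*(\phibif) = (L^*\psibif^*)(\phibif) = \psibif^*(L\phibif) = 0 .
\end{equation*}
Hence either $\mu_0=0$, which is the claim, or $\psibif^*(\phibif)=0$. Some care is needed about which spaces the pairing lives in. The element $\phibif$ lies in $X=\ell^1_s\subset \ell^1_{s-4}=Y$, and $\psibif\in\ell^\infty_{-(s-4)}\cong Y^*$. The map $L^*\psibif$ is interpreted as the functional $u\mapsto\psibif^*(Lu)$ on $X$. The term $\mu_0\psibif$ is regarded in $X^*$ through the continuous embedding $X\hookrightarrow Y$, and evaluating it at $\phibif$ gives $\mu_0\psibif^*(\phibif)$. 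With this reading, the displayed identity is justified.

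The second step rules out $\psibif^*(\phibif)=0$ by a perturbation estimate. Decompose
\begin{equation*}
\psibif^*(\phibif)-\bpsi^*(\bphi) = \bpsi^*(\phibif-\bphi) + (\psibif^*-\bpsi^*)(\bphi) + (\psibif^*-\bpsi^*)(\phibif-\bphi).
\end{equation*}
The first term is bounded by $\|\bpsi^*\|_{Y^*}\,\|\phibif-\bphi\|_Y$. The second is bounded by $r_\psi\|\bphi\|_Y$, and the third by $r_\psi\|\phibif-\bphi\|_Y$. Because the weights satisfy $\xi^{(s-4)}_n\le \xi^{(s)}_n$, we have $\|\cdot\|_Y\le\|\cdot\|_X$, so $\|\phibif-\bphi\|_Y\le r_\phi$. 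Together these give
\begin{equation*}
|\psibif^*(\phibif)-\bpsi^*(\bphi)| \le \|\bpsi^*\|_{Y^*}r_\phi+\|\bphi\|_Y r_\psi + r_\psi r_\phi < |\bpsi^*(\bphi)| .
\end{equation*}
Therefore $\psibif^*(\phibif)\neq0$, which forces $\mu_0=0$ and hence $L^*\psibif=0$.

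The main point requiring care is the functional-analytic bookkeeping in the first step, namely making sense of $L^*\psibif-\mu_0\psibif$ as an element of $X^*$ and of the pairing with $\phibif$. The rest is the elementary triangle-inequality estimate above, together with the norm comparison $\|\cdot\|_Y\le\|\cdot\|_X$.
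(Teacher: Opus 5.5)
Your proposal is correct and follows the paper's route. The paper uses the same duality pairing $\mu_0\,\psibif^*(\phibif)=\psibif^*(L\phibif)=0$ to force $\mu_0=0$ once $\psibif^*(\phibif)\neq 0$ is known; it only asserts that \eqref{eq:cond_psiphi} gives this non-vanishing, and your three-term decomposition together with $\Vert\cdot\Vert_Y\le\Vert\cdot\Vert_X$ is exactly the estimate it leaves out.
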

\begin{proof}
Since $(\mu_0,\psibif)$ is a zero of~$\cG^*$, $\psibif$ is an eigenfunction
of $D_uF(\lambdabif,\ubif)^*$, with eigenvalue~$\mu_0$, whereas~$\phibif$ is an 
eigenfunction of~$D_uF(\lambdabif,\ubif)$, with eigenvalue~$0$. By a classical 
argument, if~$\mu_0$ were to be nonzero, then we would have $\psibif^*(\phibif) = 0$. 
Indeed,
\begin{align*}
\mu_0 \psibif^*(\phibif) =  \psibif^*(D_uF(\lambdabif,\ubif)\phibif) = 0.
\end{align*}
Yet, our assumption~\eqref{eq:cond_psiphi} implies that $\psibif^*(\phibif) \neq 0$,
and therefore one must have $\mu_0 = 0$.
\end{proof}

In order to get a rigorous enclosure for a zero of~$\cG^*$, we proceed exactly as we 
did when studying the extended zero-finding problem~$\cF$. That is, we first obtain 
numerically an approximate zero~$(\bmu,\bpsi)$ of~$\cG^*$, then introduce a suitable 
approximate inverse~$A^*$ of~$D\cG^*(\bmu,\bpsi)$, and finally apply 
Theorem~\ref{thm:NewtonKantorovich} to the quasi-Newton map
\begin{align}
\label{eq:TforG}
T:(\mu,\psi) \mapsto (\mu,\psi) - A^*\cG^*(\mu,\psi).
\end{align}
\begin{remark}
\label{rem:opellinftyVSopell1}
This fixed point map is defined on $\R\times \ell^\infty_{-(s-4)}$. In order to apply Theorem~\ref{thm:NewtonKantorovich}, we need to control various operator norms, and in particular for operators on $\ell^\infty_{-(s-4)}$. This can easily be done, as the operator norm associated to a weighted $\ell^\infty$ norm is nothing but the weighted supremum of the weighted $\ell^1$-norm of the rows. That is, for any $B^*\in\cL(\ell^\infty_{-s})$,
\begin{align*}
\Vert B^* \Vert_{\ell^\infty_{-s}} = \sup_{i\in\N^d}\frac{1}{\xi^{(s)}_i} \sum_{j\in\N^d} \vert B^*_{i,j} \vert \xi^{(s)}_j.
\end{align*}
However, if there exists $B\in\cL(\ell^1_s)$ such that~$B^*$ is the adjoint of~$B$, 
then we also have the identity $\Vert B^* \Vert_{\ell^\infty_{-s}} = \Vert B \Vert_{\ell^1_s}$, 
which brings us exactly back to the setting of Section~\ref{sec:proofOK}, where we 
derived all the estimates using weighted~$\ell^1$ spaces.

We can take advantage of this by considering $B\in\cL(\R\times\ell^1_{s},\R\times\ell^1_{s-4})$ 
given by
\begin{equation*}
B = \left(
\begin{array}{cc}
0 & -\bpsi^*\\
\bphi & D_uF(\lambdabif,\ubif)-\bmu
\end{array}
\right),
\end{equation*}
and then construct an approximate inverse $A\in\cL(\R\times\ell^1_{s-4},\R\times\ell^1_{s})$ 
of this operator~$B$, again in the form $A = A^{\leq N} - \Delta^{-2}\Pi^{>N}$.
In view of the identity~$D\cG^*(\bmu,\bpsi) = B^*$, we use the adjoint~$A^*$ of~$A$ 
for defining~$T$ in~\eqref{eq:TforG}. Thus
\begin{align*}
DT(\bmu,\bpsi) = I - A^* D\cG^*(\bmu,\bpsi) = (I - BA)^*,
\end{align*}
and all the estimates for~$DT$ necessary for Theorem~\ref{thm:NewtonKantorovich} can 
be obtained for~$I - BA$, with weighted~$\ell^1$ spaces.
\end{remark}

The technical details necessary to apply Theorem~\ref{thm:NewtonKantorovich} 
to~\eqref{eq:TforG} are very similar to those of Section~\ref{sec:proofOK}, and 
are presented in Appendix~\ref{app:sec:detailbounds}.
\section*{Acknowledgments}
M.B. is supported by the ANR project CAPPS: ANR-23-CE40-0004-01.
The research of E.S.\ was partially supported by the Simons Foundation
under Award~636383, and T.W.\ was partially supported by the Simons
Foundation under Award~581334 and the Grant SFI-MPS-TSM-00013686.

\appendix
\section*{Appendix}

This appendix provides technical details regarding the derivation of the 
computable bounds required by our computer-assisted proofs. Appendix~\ref{sec:app:Z12} 
relates to Section~\ref{sec:proofOK}, Appendix~\ref{sec:boundsSKT} to 
Section~\ref{sec:proofSKT} and Appendix~\ref{app:sec:detailbounds} to 
Section~\ref{sec:transcritical}. We emphasize once more that the main 
ideas can already be understood when studying a regular steady state, 
and that the remaining challenges are only of technical nature.

\section{Computable bounds for the Ohta--Kawasaki equation}\label{sec:app:Z12}

\paragraph{The $Z^1$ bounds.}
In order to obtain the $Z^1$ estimates, we have to estimate the norm of the 
different components of $DT(\bx) = I-AD\cF(\bx)$. We start with the following
observation. For any $x\in\Pi^{>3N}\cX$, $D\cF(\bx)x \in \Pi^{>N}\cX$, i.e.,
$D\cF(\bx)\Pi^{>3N} = \Pi^{>N}D\cF(\bx)\Pi^{>3N}$, and hence 
\begin{align*}
AD\cF(\bx)\Pi^{>3N} = \Delta^{-2}\Pi^{>N}D\cF(\bx)\Pi^{>3N}.
\end{align*}
This quantity no longer depends on the finite part~$A^{\leq N}$ of~$A$, and the 
term~$\Delta^{-2}\Pi^{>N}$ will allow us to easily obtain explicit estimates
showing that $AD\cF(\bx)\Pi^{>3N}$ becomes small when~$N$ is large. On the 
other hand, for any $x\in\Pi^{\leq 3N}\cX$, $D\cF(\bx)x \in \Pi^{\leq 5N}\cX$, 
and $AD\cF(\bx)x \in \Pi^{\leq 5N}\cX$. In particular, 
\begin{equation*}
AD\cF(\bx)\Pi^{\leq 3N} = \Pi^{\leq 5N}AD\cF(\bx)\Pi^{\leq 3N},
\end{equation*}
which is a finite dimensional operator, hence we can obtain its norm (or 
more precisely, the norm of its components, as required by 
Theorem~\ref{thm:NewtonKantorovich}) with a finite computation.

This motivates the introduction of two separate sets of bounds~$Z^{1,\finite}_{m,i}$ 
and~$Z^{1,\tail}_{m,i}$, such that, for $1\leq m,i\leq 3$,
\begin{align*}
\left\Vert\pi_m D_i T(\bar{x})\Pi^{\leq 3N} \right\Vert_{B(\cX^i,\cX^m)} &\leq Z^{1,\finite}_{m,i} \\
\left\Vert\pi_m D_i T(\bar{x})\Pi^{> 3N} \right\Vert_{B(\cX^i,\cX^m)} &\leq Z^{1,\tail}_{m,i}.
\end{align*}
By standard properties of~$\ell^1$ norms, $Z^1_{m,i} := \max\left(Z^{1,\finite}_{m,i},\, Z^{1,\tail}_{m,i}\right)$ 
then satisfies~\eqref{eq:def_Z}.

As already observed above, for every $1\leq m,i\leq 3$,
\begin{equation*}
\left\Vert\pi_m D_i T(\bar{x})\Pi^{\leq 3N} \right\Vert_{B(\cX^i,\cX^m)} = \left\Vert\pi_m \Pi^{\leq 5N}D_i T(\bar{x}) \Pi^{\leq 3N}\right\Vert_{B(\cX^i,\cX^m)},
\end{equation*}
which amounts to a weighted~$\ell^1$ operator norm of a finite matrix, therefore
we can obtain all the~$Z^{1,\finite}_{m,i}$ terms with finite computations.

It remains to find computable tail estimates. First note that
one can take $Z^{1,\tail}_{1,i}=0$ for every $i=1,2,3$ (simply because, for all 
$x=(\lambda,u,\phi)\in\cX$, $\Pi^{>N} x = (0,\Pi^{>N}u,\Pi^{>N}\phi)$, hence 
one has $\pi_1\Pi^{>N} x=0$). Moreover, $Z^{1,\tail}_{m,1}=0$ for $m=1,2,3$, as
\begin{align*}
D_1 \cF(\bx) = D_\lambda \cF(\bx) = 
\begin{pmatrix}
0 \\
-\Delta(\bu-\bu^3) - \sigma(\bu-\mu) \\
-\Delta((1-3\bu^2)\bphi) - \sigma\bphi
\end{pmatrix} \in \Pi^{\leq 3N}\cX,
\end{align*} 
hence $D_1 \cF(\bx)\Pi^{>3N} = 0$.
We look at the remaining terms separately. We have
\begin{align*}
\pi_2 D_2 \cF(\bx) u = D_uF(\blambda,\bu)u = -\Delta^2 u -\blambda \Delta((1-3\bu^2)u) - \blambda\sigma u,
\end{align*}
therefore
\begin{align*}
\pi_2 D_2 T(\bx) \Pi^{>3N}u = -\blambda\Delta^{-1}\Pi^{>N}\left((1-3\bu^2)\Pi^{>3N}u\right) -\blambda\sigma\Delta^{-2}\Pi^{>3N}u,
\end{align*}
and 
\begin{align}
\label{eq:Z1tail_example}
\left\Vert \pi_2 D_2 T(\bx) \Pi^{>3N}u \right\Vert_X \leq \vert \blambda\vert \left( \frac{\left\Vert 1-3\bu^2\right\Vert_X}{\rho_N} +\frac{\vert\sigma\vert}{\rho_{3N}^2} \right) \left\Vert \Pi^{>3N}u \right\Vert_X,
\end{align}
where $\rho_N$ denotes the smallest eigenvalue modulus of $\Delta\Pi^{>N}$, namely 
\begin{align}
\label{eq:rhoN}
\rho_N = \min_{1\leq j\leq d}\left(\frac{\pi(N_j+1)}{L_j}\right)^2.
\end{align}
We emphasize that the estimate~\eqref{eq:Z1tail_example} relies on the fact that $X=\ell^1_s$ is a Banach algebra, which is why we assumed $s\geq 0$.
In conclusion, we take
\begin{align*}
Z_{2,2}^{1,\tail} = \vert\blambda\vert \left( \frac{\left\Vert 1-3\bu^2\right\Vert_X}{\rho_N} +\frac{\vert\sigma\vert}{\rho_{3N}^2} \right).
\end{align*}
Similarly, from
\begin{align*}
\pi_3 D_2 \cF(\bx) u = D_{uu}F(\blambda,\bu)(u,\bphi) = 6\blambda \Delta(\bu\bphi u),
\end{align*}
one obtains
\begin{align*}
Z_{3,2}^{1,\tail} = 6\vert\blambda\vert \frac{\left\Vert \bu\bphi\right\Vert_X}{\rho_N}.
\end{align*}
Next,
\begin{align*}
\pi_2 D_3 \cF(\bx) = D_\phi F(\blambda,\bu)= 0,
\end{align*}
hence we take $Z_{2,3}^{1,\tail} =0$. Finally 
\begin{align*}
\pi_3 D_3 \cF(\bx) \phi = D_uF(\blambda,\bu)\phi,
\end{align*}
and therefore we take $Z_{3,3}^{1,\tail} =Z_{2,2}^{1,\tail}$. This completes our 
derivation of the $Z^1$ bounds.

\paragraph{The $Z^2$ bounds.} The derivation of the~$Z^2$ estimates is more
elaborate, because it involves many terms. Apart from that, however, it is very 
straightforward. First, observe that one has the identity
$DT(x)-DT(\bx) = -A(D\cF(x)-D\cF(\bx))$, 
and therefore
\begin{align*}
\left\Vert\pi_m (D_i T(x)- D_i T(\bar{x})) \right\Vert_{B(\cX^i,\cX^m)} &\leq \sum_{l=1}^3 \left\Vert\pi_m A\pi_l \right\Vert_{B(\cY^l,\cX^m)} \left\Vert\pi_l (D_i \cF(x)- D_i \cF(\bar{x})) \right\Vert_{B(\cX^i,\cY^l)},
\end{align*}
where we go via the~$\cY$ spaces to account for the fact that $D \cF(x)- D\cF(\bar{x})$ 
is not bounded on~$\cX$. Owing to the structure of~$A$, each 
$\left\Vert\pi_m A\pi_l \right\Vert_{B(\cY^l,\cX^m)}$ amounts to a finite 
computation, and we are left with estimating
\begin{align*}
\left\Vert\pi_l (D_i \cF(x)- D_i \cF(\bar{x})) \right\Vert_{B(\cX^i,\cY^l)} &\leq \sum_{j=1}^3 \sup_{x\in\Box(\bx,\rstar)} \left\Vert\pi_l D_{ij} \cF(x) \right\Vert_{BL(\cX^i\times\cX^j,\cY^l)} \left\Vert\pi_j (x-\bx) \right\Vert_{\cX^j},
\end{align*}
where~$BL(\cX^i\times\cX^j,\cY^l)$ denotes the space of bounded bilinear operators
from~$\cX^i\times\cX^j$ into~$\cY^l$. That is, we are going to find computable 
constants~$\tilde{Z}^2_{l,i,j}$ such that, for all $1\leq i,j,l\leq 3$,
\begin{equation*}
\sup_{x\in\Box(\bx,\rstar)}\left\Vert\pi_l D_{ij} \cF(x) \right\Vert_{BL(\cX^i\times\cX^j,\cY^l)} \leq \tilde{Z}^2_{l,i,j},
\end{equation*}
and then take
\begin{equation*}
Z^2_{m,i,j} := \sum_{l=1}^3 \left\Vert\pi_m A\pi_l \right\Vert_{B(\cY^l,\cX^m)} \tilde{Z}^2_{l,i,j}.
\end{equation*}
There are quite a few~$\tilde{Z}^2_{l,i,j}$ terms to estimate, but each of them is rather 
trivial. For instance, recalling that $\pi_2\cF(x) = F(\lambda,u)$, we have
\begin{align*}
\pi_2 D_{12}\cF(x) = D_{\lambda u} F(\lambda,u) = -\Delta\left((1-3u^2)\cdot\right)\cdot,
\end{align*}
meaning that
\begin{align*}
D_{\lambda u} F(\lambda,u)(\tilde\lambda,\tilde u) = -\Delta\left((1-3u^2)\tilde{u}\right)\tilde{\lambda}.
\end{align*}
In particular, using $\left\Vert \Delta\right\Vert_{B(X,Y)} \leq
\max_i\left(\frac{\pi}{L_i}\right)^2$ and $x\in\Box(\bx,\rstar)$, one obtains
\begin{align*}
\left\Vert D_{\lambda u} F(\lambda,u) (\tilde\lambda,\tilde u) \right\Vert_{Y} &\leq \max_i\left(\frac{\pi}{L_i}\right)^2 \left\Vert 1-3u^2\right\Vert_X \left\Vert \tilde u\right\Vert_X  \vert \tilde\lambda\vert \\
&\leq \max_i\left(\frac{\pi}{L_i}\right)^2 \left(\left\Vert 1-3\bu^2\right\Vert_X + 6\left\Vert \bu\right\Vert_X \rstar_u + 3 (\rstar_u)^2 \right)\left\Vert \tilde u\right\Vert_X  \vert \tilde\lambda\vert,
\end{align*}
hence 
\begin{align*}
\sup_{x\in\Box(\bx,\rstar)} \left\Vert\pi_2 D_{12} \cF(x) \right\Vert_{BL(\cX^1\times\cX^2,\cY^2)} \leq \max_i\left(\frac{\pi}{L_i}\right)^2 \left(\left\Vert 1-3\bu^2\right\Vert_X + 6\left\Vert \bu\right\Vert_X \rstar_2 + 3 (\rstar_2)^2\right),
\end{align*}
and we can take
\begin{align*}
\tilde{Z}^2_{2,1,2} = \max_i\left(\frac{\pi}{L_i}\right)^2\left( \left\Vert 1-3\bu^2\right\Vert_X + 6\left\Vert \bu\right\Vert_X \rstar_2 + 3 (\rstar_2)^2\right).
\end{align*}
All of the other~$\tilde{Z}^2_{l,i,j}$ terms are obtained in a similar fashion.

\section{Computable bounds for the SKT equation}\label{sec:boundsSKT}

In this appendix, we mostly emphasize the differences with the Ohta--Kawasaki 
equation, which are mainly due to the nonlinear diffusion terms in the SKT 
system, and refer the reader again to~\cite{breden:22a} for details in a simpler case.

\paragraph*{The $Y$ bounds.} Everything is very similar to the Ohta--Kawasaki case. 
Because the system is now quadratic instead of cubic, $\cF(\bx)\in\Pi^{\leq 2N}$, but 
then $A\cF(\bx)\in\Pi^{\leq 3N}$ because of the multiplication by $\bw\in\Pi^{\leq N}$
and $\bg\in \Pi^{\leq N}$ contained in~$A$. But in the end $A\cF(\bx)\in\Pi^{\leq 3N}$ is 
again finite, therefore each $Y_i = \left\Vert \pi_i A\cF(\bx) \right\Vert_{\cX_i}$ can be 
computed.

\paragraph{The $Z^1$ bounds.} We again introduce separate~$Z^{1,\finite}$ and~$Z^{1,\tail}$ 
bounds such that, for all choices of $1\leq m,i\leq 5$,
\begin{align*}
\left\Vert\pi_m D_i T(\bar{x})\Pi^{\leq 2N} \right\Vert_{B(\cX^i,\cX^m)} &\leq Z^{1,\finite}_{m,i} \\
\left\Vert\pi_m D_i T(\bar{x})\Pi^{> 2N} \right\Vert_{B(\cX^i,\cX^m)} &\leq Z^{1,\tail}_{m,i},
\end{align*}
the slightly different projection sizes being due to $F$ only being quadratic.

Remembering that $A$ now incorporates multiplication operators by~$\omega$ and~$\gamma$, 
we get, for every $1\leq m,i\leq 5$,
\begin{equation*}
\left\Vert\pi_m D_i T(\bar{x})\Pi^{\leq 2N} \right\Vert_{B(\cX^i,\cX^m)} = \left\Vert\pi_m \Pi^{\leq 4N}D_i T(\bar{x})\Pi^{\leq 2N} \right\Vert_{B(\cX^i,\cX^m)}.
\end{equation*}
Therefore, we can again obtain all the $Z^{1,\finite}_{m,i}$ with finite computations.

Regarding the tail estimates, as was the case for the Ohta--Kawasaki example, many of 
the~$Z^{1,\tail}_{m,i}$ are equal to zero, and we only give the nontrivial ones 
below. By slightly abusing notation, we apply the $\ell^1_s$-norm entry-wise to 
matrices, e.g.,
\begin{align*}
\left\Vert \bw\right\Vert_{\ell^1_s} := 
\begin{pmatrix}
\left\Vert \bw_{11}\right\Vert_{\ell^1_s} & \left\Vert \bw_{12}\right\Vert_{\ell^1_s} \\
\left\Vert \bw_{21}\right\Vert_{\ell^1_s} & \left\Vert \bw_{22}\right\Vert_{\ell^1_s}
\end{pmatrix}.
\end{align*}
We also denote once more by~$\rho_N$ the smallest eigenvalue modulus of~$\Delta\Pi^{>N}$, 
given by~\eqref{eq:rhoN}.
Using that~$\ell^1_s$ is a Banach algebra (because we assumed $s\geq 0$),
one then obtains
\begin{align*}
\renewcommand{\arraystretch}{1.5}
\begin{pmatrix}
Z^{1,\tail}_{2,2} & Z^{1,\tail}_{2,3} \\
Z^{1,\tail}_{3,2} & Z^{1,\tail}_{3,3}
\end{pmatrix} 
&=
\left\Vert
\begin{pmatrix}
1 & 0 \\
0 & 1
\end{pmatrix}
-\omega \alpha
\right\Vert_{\ell^1_s} \\
&\quad 
+ \frac{1}{\rho_N} \left\Vert
\omega
\right\Vert_{\ell^1_s}\left\Vert
\begin{pmatrix}
r_1 - 2a_{1}\bu_1 - b_{1}\bu_2 & -b_{1}\bu_1 \\
-b_{2}\bu_2 & r_2 - b_2\bu_1 - 2a_{2}\bu_2
\end{pmatrix}
\right\Vert_{\ell^1_s},
\end{align*}
\begin{align*}
\renewcommand{\arraystretch}{1.5}
\begin{pmatrix}
Z^{1,\tail}_{4,4} & Z^{1,\tail}_{4,5} \\
Z^{1,\tail}_{5,4} & Z^{1,\tail}_{5,5}
\end{pmatrix} 
=
\begin{pmatrix}
Z^{1,\tail}_{2,2} & Z^{1,\tail}_{2,3} \\
Z^{1,\tail}_{3,2} & Z^{1,\tail}_{3,3}
\end{pmatrix},
\end{align*}
and
\begin{align*}
\renewcommand{\arraystretch}{1.5}
\begin{pmatrix}
Z^{1,\tail}_{4,2} & Z^{1,\tail}_{4,3} \\
Z^{1,\tail}_{5,2} & Z^{1,\tail}_{5,3}
\end{pmatrix} 
=
\left\Vert
\gamma \alpha + \omega\beta
\right\Vert_{\ell^1_s} 
&+ \frac{1}{\rho_N} \left\Vert
\gamma
\right\Vert_{\ell^1_s}  \left\Vert
\begin{pmatrix}
r_1 - 2a_{1}\bu_1 - b_{1}\bu_2 & -b_{1}\bu_1 \\
-b_{2}\bu_2 & r_2 - b_2\bu_1 - 2a_{2}\bu_2
\end{pmatrix}
\right\Vert_{\ell^1_s}\\
& 
+ \frac{1}{\rho_N} \left\Vert
\omega
\right\Vert_{\ell^1_s}  \left\Vert
\begin{pmatrix}
2a_{1}\bphi_1 + b_{1}\bphi_2 & b_{1}\bphi_1 \\
b_{2}\bphi_2 & b_2\bphi_1 + 2a_{2}\bphi_2
\end{pmatrix}
\right\Vert_{\ell^1_s}.
\end{align*}

\paragraph{The $Z^2$ bounds.} These bounds can be derived exactly as explained 
in Appendix~\ref{sec:app:Z12}, but things are even simpler here because the SKT 
system only has quadratic nonlinearities.

\section{Computable bounds for proving transcriticality}\label{app:sec:detailbounds}

In this final appendix we sketch how to obtain the bounds necessary to apply 
Theorem~\ref{thm:NewtonKantorovich} to~\eqref{eq:TforG}.
Compared to Section~\ref{sec:proofOK} and Appendix~\ref{sec:app:Z12}, one extra 
difficulty that one has to deal with is that the map~$\cG^*$ depends on~$(\lambdabif,\ubif)$, 
but in practice we only have access to~$(\blambda,\bu)$ and to the error bounds
\begin{align*}
\vert \lambdabif - \blambda\vert \leq r_\lambda,\qquad \Vert \ubif - \bu \Vert_{X} \leq r_u.
\end{align*}
Thus, we split $\cG^* = \bcG^* + \varepsilon_{\cG^*}$, where
\begin{align*}
\bcG^*(\mu,\psi) := (\psi^*(\bphi)-1, D_uF(\blambda,\bu)^*\psi - \mu\psi),
\end{align*}
and
\begin{align*}
\varepsilon_{\cG^*}(\mu,\psi) := (0, \left(D_uF(\lambdabif,\ubif)-D_uF(\blambda,\bu)\right)^*\psi).
\end{align*}
The term~$\bcG^*$ is the one for which we do most of the careful estimates, 
whereas~$\varepsilon_{\cG^*}$ is treated as an error term, and thus bounded more crudely. 

We first collect some straightforward-to-prove but useful a priori estimates.
\begin{lemma}
\label{lem:est_ells}
For all $s\in\R$, and $u\in\ell^1_s$,
\begin{equation}
\label{eq:estLap}
\left\Vert \Delta u\right\Vert_{\ell^1_{s-2}} \leq \max_i\left(\frac{\pi}{L_i}\right)^2 \left\Vert u\right\Vert_{\ell^1_{s}},
\end{equation}
\begin{equation}
\label{eq:estinvLapN}
\left\Vert \Delta^{-1}\Pi^{>N} u\right\Vert_{\ell^1_{s+2}} \leq \max_i\left(\frac{L_i}{\pi}\right)^2 \left(\frac{1+m_N}{m_N} \right)^2\left\Vert \Pi^{>N}u\right\Vert_{\ell^1_{s}},
\end{equation}
where $m_N = \min_i N_i+1$.
\end{lemma}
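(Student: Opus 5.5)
Both estimates follow from the fact that $\Delta$ and $\Delta^{-1}\Pi^{>N}$ act diagonally on the cosine coefficients. I will therefore compare, coefficient by coefficient, the eigenvalue weight against the ratio of the norm weights $\xi^{(s)}_n$. Write $\kappa_n := \sum_{i=1}^d (n_i\pi/L_i)^2$, so that $(\Delta u)_n = -\kappa_n u_n$. Since $\xi^{(s)}_n = \xi^{(0)}_n (1+|n|_2)^s$, the factor $\xi^{(0)}_n$ is common to both sides and cancels. It then suffices to prove the following pointwise inequalities on the multipliers:
\begin{equation*}
\kappa_n (1+|n|_2)^{s-2} \le \max_i\left(\frac{\pi}{L_i}\right)^2 (1+|n|_2)^{s}
\quad\mbox{for all } n\in\N^d,
\end{equation*}
and
\begin{equation*}
\kappa_n^{-1} (1+|n|_2)^{s+2} \le \max_i\left(\frac{L_i}{\pi}\right)^2 \left(\frac{1+m_N}{m_N}\right)^2 (1+|n|_2)^{s}
\quad\mbox{for all } n\not\le N.
\end{equation*}
Summing these against $|u_n|\,\xi^{(0)}_n$ gives \eqref{eq:estLap} and \eqref{eq:estinvLapN} respectively.

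For \eqref{eq:estLap}, I bound each term by the largest coefficient: $\kappa_n \le \max_i (\pi/L_i)^2 \sum_i n_i^2 = \max_i(\pi/L_i)^2 |n|_2^2 \le \max_i(\pi/L_i)^2 (1+|n|_2)^2$. This yields the first inequality for every $n$, including $n=0$, where both sides are trivially compatible.

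For \eqref{eq:estinvLapN}, I use the lower bound $\kappa_n \ge \min_i(\pi/L_i)^2 |n|_2^2$ and note that $1/\min_i(\pi/L_i)^2 = \max_i (L_i/\pi)^2$. The multiplier is then at most $\max_i(L_i/\pi)^2 \left((1+|n|_2)/|n|_2\right)^2 (1+|n|_2)^s$. The only point requiring care is bounding $(1+|n|_2)/|n|_2$ uniformly on the support of $\Pi^{>N}$. If $n \not\le N$, then some index satisfies $n_j \ge N_j+1 \ge m_N$, hence $|n|_2 \ge m_N \ge 1$. In particular $n\neq 0$, so the convention $(\Delta^{-1}u)_0 = 0$ never enters. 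Since $t\mapsto (1+t)/t$ is decreasing on $(0,\infty)$, we get $(1+|n|_2)/|n|_2 \le (1+m_N)/m_N$.

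Neither step is a genuine obstacle. The only subtlety I expect is the precise meaning of $n\not\le N$ (some component exceeding $N_j$). This is what guarantees $|n|_2 \ge m_N$, rather than requiring every component to be large.
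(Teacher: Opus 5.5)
Your proof is correct: both bounds reduce to the pointwise multiplier inequalities you state, and your treatment of the support of $\Pi^{>N}$ (some $n_j \ge N_j+1 \ge m_N$, hence $|n|_2 \ge m_N$, combined with the monotonicity of $t\mapsto (1+t)/t$) is exactly what is needed. The paper omits the proof as straightforward, and your coefficient-wise comparison of the diagonal multipliers of $\Delta$ and $\Delta^{-1}\Pi^{>N}$ against the weight ratios $(1+|n|_2)^{\pm 2}$ is the argument the paper has in mind.
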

Before formulating the next auxiliary result, we need to introduce some further notation. 
For $A\in \cL(\R\times \ell^1_s, \R\times\ell^1_t)$, let us denote
\begin{align*}
\left\Vert A \right\Vert_{s\to t} := 
\begin{pmatrix}
\left\vert \pi_1 A \pi_1 \right\vert & \left\Vert \pi_1 A \pi_2 \right\Vert_{\left(\ell^1_{s}\right)^*} \\
\left\Vert \pi_2 A \pi_1 \right\Vert_{\ell^1_{t}} & \left\Vert \pi_2 A \pi_2 \right\Vert_{B(\ell^1_{s},\ell^1_{t})}
\end{pmatrix},
\end{align*}
and similarly for $A^*\in \cL(\R\times \ell^\infty_{-t}, \R\times\ell^\infty_{-s})$. We now give a computable bound for $A^* \epsilon_{\cG^*}$.
\begin{lemma}
Assuming $s\geq 2$, we have the estimate
\begin{multline}
\label{eq:estAepsG}
\left(\left\Vert A^* \epsilon_{\cG^*} \right\Vert_{s-4\to s-4}\right)^* \leq 
\begin{pmatrix}
0 & 0\\
0 & r_\lambda \vert \sigma\vert 
\end{pmatrix} \left\Vert A \right\Vert_{s-4\to s-4} 
\\
+ \begin{pmatrix}
0 & 0\\
0 & \max_i\left(\frac{\pi}{L_i}\right)^2\left( r_\lambda\Vert 1-3\bu^2\Vert_{\ell^1_{s-2}} + 3  (\vert\blambda\vert+r_\lambda) r_u (2\Vert\bu\Vert_{\ell^1_{s-2}} + r_u) \right)
\end{pmatrix} \left\Vert A \right\Vert_{s-4\to s-2} .
\end{multline}
\end{lemma}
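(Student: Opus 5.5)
We first make $\epsilon_{\cG^*}$ explicit. Only its second component is nonzero, and it acts only on $\psi$, so in block form with respect to $\R\times\ell^\infty$ it is $\begin{pmatrix}0&0\\0&E^*\end{pmatrix}$ with $E := D_uF(\lambdabif,\ubif)-D_uF(\blambda,\bu)$. Using the formula $D_uF(\lambda,u)v = -\Delta^2 v-\lambda\Delta((1-3u^2)v)-\lambda\sigma v$, the biharmonic term cancels, and we get
\begin{equation*}
E v = -\Delta\Big( \big[(\lambdabif-\blambda)(1-3\bu^2) - 3\lambdabif(\ubif^2-\bu^2)\big] v\Big) - (\lambdabif-\blambda)\sigma v .
\end{equation*}
We split $E=E_1+E_2$. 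The zeroth order part is $E_1 = -(\lambdabif-\blambda)\sigma I$. The part $E_2 = -\Delta(c\,\cdot)$ is the Laplacian composed with multiplication by the coefficient $c = (\lambdabif-\blambda)(1-3\bu^2)-3\lambdabif(\ubif-\bu)(\ubif+\bu)$.

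Next we pass to adjoints, as in Remark~\ref{rem:opellinftyVSopell1}. We have $A^*\epsilon_{\cG^*} = (\tilde E A)^*$ with $\tilde E=\mathrm{diag}(0,E)$, and the adjoint preserves operator norms blockwise. This is why the statement carries the outer ${}^*$ and transposes the block-norm matrix. It therefore suffices to bound the blocks of $\tilde E A = \tilde E_1 A + \tilde E_2 A$ in the $\|\cdot\|_{s-4\to s-4}$ sense, with submultiplicativity $\|\tilde E_k A\|\le \|\tilde E_k\|\,\|A\|$ applied entrywise. The second row of the $2\times 2$ block matrix picks up the factor from $E_k$, and the first row vanishes.

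For $E_1$, regarded as a map $\ell^1_{s-4}\to\ell^1_{s-4}$, the norm is $|\lambdabif-\blambda||\sigma|\le r_\lambda|\sigma|$. This gives the first term, composed with $\|A\|_{s-4\to s-4}$.

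For $E_2$, we view it as a map $\ell^1_{s-2}\to\ell^1_{s-4}$, so that it composes with $\|A\|_{s-4\to s-2}$. By~\eqref{eq:estLap} with index $s-2$, its norm is at most $\max_i(\pi/L_i)^2$ times the multiplication norm of $c$ on $\ell^1_{s-2}$. Because $s\ge 2$, the space $\ell^1_{s-2}$ is a Banach algebra, so that multiplication norm is at most $\|c\|_{\ell^1_{s-2}}$. We then bound
\begin{equation*}
\|c\|_{\ell^1_{s-2}} \le r_\lambda\|1-3\bu^2\|_{\ell^1_{s-2}} + 3(|\blambda|+r_\lambda)\,\|\ubif-\bu\|_{\ell^1_{s-2}}\big(2\|\bu\|_{\ell^1_{s-2}}+\|\ubif-\bu\|_{\ell^1_{s-2}}\big),
\end{equation*}
using $\ubif+\bu = 2\bu+(\ubif-\bu)$ and $|\lambdabif|\le|\blambda|+r_\lambda$. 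Finally, the weights are monotone in $s$ and $s-2\le s$, so $\|\ubif-\bu\|_{\ell^1_{s-2}}\le \|\ubif-\bu\|_X\le r_u$. This yields exactly the second term.

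The main obstacle is bookkeeping rather than analysis. One must choose the intermediate space $\ell^1_{s-2}$ for $E_2$ so that the $\Delta$ loss is absorbed by $A$ rather than by $E$. This is the reason for the split into two terms and for the hypothesis $s\ge2$, which is needed for the algebra property. One must also keep the transpose and adjoint convention on the block norms consistent.
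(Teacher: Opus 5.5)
Your proof is correct and takes the same route as the paper. The paper's proof simply splits $D_uF(\lambdabif,\ubif)-D_uF(\blambda,\bu)$ into the zeroth-order term $(\blambda-\lambdabif)\sigma$ plus $\Delta$ composed with multiplication by $(\blambda-\lambdabif)(1-3\bu^2)+3\lambdabif(\ubif-\bu)(\ubif+\bu)$, and then invokes Lemma~\ref{lem:est_ells}; you fill in the steps it leaves implicit (passing to the pre-adjoint $\tilde E A$ to explain the transposed block norms, measuring the $\Delta$-part from $\ell^1_{s-2}$ to $\ell^1_{s-4}$ so it pairs with $\Vert A\Vert_{s-4\to s-2}$, the Banach algebra property of $\ell^1_{s-2}$ for $s\geq 2$, and $\Vert \ubif-\bu\Vert_{\ell^1_{s-2}}\leq r_u$ by monotonicity of the weights).
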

\begin{proof}
We simply write
\begin{align*}
D_uF(\lambdabif,\ubif) - D_uF(\blambda,\bu) &= -\lambdabif \Delta\left((1-3\ubif^2)\cdot \right) - \lambdabif\sigma + \blambda \Delta\left((1-3\bu^2)\cdot \right) + \blambda\sigma \\
&= (\blambda-\lambdabif)\sigma + (\blambda-\lambdabif) \Delta\left((1-3\bu^2)\cdot \right) + 3\lambdabif \Delta\left((\ubif-\bu)(\ubif+\bu)\cdot \right),
\end{align*}
and use Lemma~\ref{lem:est_ells}.
\end{proof}

After these preparations we can finally derive computable estimates~$Y_m$,
$Z^1_{m,i}$, and~$Z^2_{m,i,j}$ which satisfy assumptions~\eqref{eq:def_Y}-\eqref{eq:def_W} 
of Theorem~\ref{thm:NewtonKantorovich}, for the operator~$T$ given by~\eqref{eq:TforG}, 
with the space $\cX=\R\times\ell^\infty_{-(s-4)}$, the map~$\cG$, and the linear operator~$A$
introduced in Section~\ref{sec:transcritical}, all under the critical assumption that $s\geq 2$.

\paragraph{The $Y$ bounds.} We simply split
\begin{align*}
T(\bmu,\bpsi) - (\bmu,\bpsi) &= -A^* \cG^*(\bmu,\bpsi) \\
&= -A^* \bcG^*(\bmu,\bpsi) -A^* \varepsilon_{\cG^*}(\bmu,\bpsi).
\end{align*}
The first term can be computed exactly, as was the case for previous~$Y$ bounds, 
and~\eqref{eq:estAepsG} provides us with computable upper bounds for the second term.

\paragraph{The $Z_1$ bounds.} We also start by splitting
\begin{align*}
DT(\bmu,\bpsi) = I - A^* D\bcG^*(\bmu,\bpsi) - A^* D\varepsilon_{\cG^*}(\bmu,\bpsi).
\end{align*}
Note that~$\varepsilon_{\cG^*}$ is linear, hence the last term is simply equal 
to~$- A^* \varepsilon_{\cG^*}$, and we can again use~\eqref{eq:estAepsG} to estimate 
it. Regarding the $I - A^* D\bcG^*(\bmu,\bpsi)$ term, we proceed as in 
Remark~\ref{rem:opellinftyVSopell1}, but replacing~$(\lambdabif,\ubif)$ 
by~$(\blambda,\bu)$. That is, we consider
$\bB\in\cL(\R\times\ell^1_{s},\R\times\ell^1_{s-4})$ given by
\begin{equation*}
\bB = \left(
\begin{array}{
  m{0.5cm} | m{3.5cm} 
}
\parbox[c][0.5cm][c]{0.5cm}{\centering $0$} &
\parbox[c][0.5cm][c]{3.5cm}{\centering $-\bpsi^*$} \\ \hline

\parbox[c][3.5cm][c]{0.5cm}{\centering $\bphi$} &
\parbox[c][3.5cm][c]{3.5cm}{\centering $D_uF(\blambda,\bu)-\bmu$} 
\end{array}
\right),
\end{equation*} 
for which we have
\begin{align*}
I - A^* D\bcG^*(\bmu,\bpsi) = \left(I - \bB A \right)^*.
\end{align*}
We can then estimate $I-\bB A:\R\times\ell^1_{s-4} \to \R\times\ell^1_{s-4}$ in
a very similar manner to what was done for previous~$Z_1$ estimates, the only 
difference being that~$A$ is now on the right (instead of being on the left
in~$I-AD\cF(\bx)$). Thus, it is sufficient to use the projections~$\Pi^{\leq N}$ 
and~$\Pi^{>N}$ to split between the finite part and the tail part of~$Z_1$. 
The finite part is obtained exactly as before, and there is only one non-zero 
tail term, namely 
\begin{align*}
\left(I + (D_uF(\blambda,\bu)-\bmu)\Delta^{-2}\right)\Pi^{>N}   &=  -\blambda\Delta\left((1-3\bu^2)\Delta^{-2}\Pi^{>N} \cdot \right) - (\blambda\sigma + \bmu)\Delta^{-2}\Pi^{>N}.
\end{align*}
Using Lemma~\ref{lem:est_ells}, and the fact that~$\ell^1_{s-2}$ is a Banach 
algebra (it is exactly for this step that we assumed $s\geq 2$), one obtains
\begin{align*}
\left\Vert \Delta\left((1-3\bu^2)\Delta^{-2}\Pi^{>N} \cdot \right) \right\Vert_{B(\ell^1_{s-4},\ell^1_{s-4})} &\leq \max_i\left(\frac{\pi}{L_i}\right)^2 \left\Vert 1-3\bu^2\right\Vert_{\ell^1_{s-2}} \left\Vert \Delta^{-2}\Pi^{>N} \right\Vert_{B(\ell^1_{s-4},\ell^1_{s-2})} \\
&\leq \left(\frac{\max_i L_i}{\min_i L_i}\right)^2 \left(\frac{1+m_N}{m_N} \right)^2 \left\Vert 1-3\bu^2\right\Vert_{\ell^1_{s-2}} \left\Vert \Delta^{-1}\Pi^{>N} \right\Vert_{B(\ell^1_{s-4},\ell^1_{s-4})} \\
&\leq  \left(\frac{\max_i L_i}{\min_i L_i}\right)^2 \left(\frac{1+m_N}{m_N} \right)^2 \frac{\left\Vert 1-3\bu^2\right\Vert_{\ell^1_{s-2}}}{\varrho_N},
\end{align*}
with~$\varrho_N$ introduced in Section~\ref{sec:boundsOK}. Thus, we get
\begin{align*}
\left\Vert \left(I + (D_uF(\blambda,\bu)-\bmu)\Delta^{-2}\right)\Pi^{>N}  \right\Vert_{B(\ell^1_{s-4},\ell^1_{s-4})} \leq Z^{1,\tail}_{2,2},
\end{align*}
with
\begin{align*}
Z^{1,\tail}_{2,2} = \vert\blambda\vert  \left(\frac{\max_i L_i}{\min_i L_i}\right)^2 \left(\frac{1+m_N}{m_N} \right)^2 \frac{\left\Vert 1-3\bu^2\right\Vert_{\ell^1_{s-2}}}{\varrho_N} + \frac{\vert\blambda\sigma + \bmu\vert}{\varrho_N^2}. 
\end{align*}

\paragraph{The $Z_2$ bounds.} This is trivial, as the only nonlinear term 
in~$\cG^*$ is~$-\mu \psi$, which leads to
\begin{align*}
\left\Vert\pi_2 D_{12} \cG^*(\mu,\psi) \right\Vert_{BL(\cX^1\times\cX^2,\cY^2)}  = \left\Vert\pi_2 D_{21} \cG^*(\mu,\psi) \right\Vert_{BL(\cX^2\times\cX^1,\cY^2)}  = 1,
\end{align*}
and to all the other terms being zero.

\addcontentsline{toc}{section}{References}
\footnotesize
%
%
\bibliography{wanner1a,wanner1b,wanner2a,wanner2b,wanner2c,bibfile_breden}
\bibliographystyle{abbrv}
\end{document}